\crefname{appsec}{Appendix}{Appendices}
\newcommand{\R}{\mathbb R}
\newcommand\xmone{{x_{k\text{--}1}}}
\newcommand\ymone{{y_{k\text{--}1}}}
\newcommand{\X}{\mathcal X}
\newcommand{\Y}{\mathcal Y}
\newcommand{\UL}{{\bf UL}\xspace}
\newcommand{\LL}{{\bf LL}\xspace}
\newcommand{\selection}{selection\xspace}
\newcommand{\Selection}{Selection\xspace} 
\newcommand{\eg}{{\textit{e.g.}}}
\newcommand\vsp{{\vspace*{-0.2cm}}}
\newcommand\given{\@ifstar{\mathrel{}\middle|\mathrel{}}{\mid}}
\DeclareRobustCommand{\abs}{\@ifstar\@abs\@@abs}
\DeclareRobustCommand{\norm}{\@ifstar\@norm\@@norm}
\DeclareRobustCommand{\inner}{\@ifstar\@inner\@@inner}
\newcommand*\diff{\mathop{}\!\mathrm{d}}
\newtheorem{lem}{Lemma}
\newtheorem{thm}{Theorem}
\newtheorem{defn}{Definition}
\newtheorem{prop}{Proposition}
\newtheorem{corr}{Corollary}
\newtheorem{assump}{Assumption}
\newlist{assumplist}{enumerate}{1}
\setlist[assumplist]{label=(\textbf{\Alph*})}
\Crefname{assumplisti}{Assumption}{Assumptions}
\newlist{assumplist2}{enumerate}{2}
\setlist[assumplist2]{label=(\textbf{\alph*})}
\Crefname{assumplist2i}{Assumption}{Assumptions}
\newlist{assumplistobs}{enumerate}{3}
\setlist[assumplistobs]{label=(\textbf{$\mathcal{F}$-\alph*})} 
\Crefname{assumplistobs}{Assumption}{Assumptions}
\Crefname{assump}{Assumption}{Assumptions}
\crefname{lem}{Lemma}{Lemmas}
\Crefname{lem}{Lemma}{Lemmas}
\crefname{thm}{Theorem}{Theorems}
\Crefname{thm}{Theorem}{Theorems}
\crefname{prop}{Proposition}{Propositions}
\Crefname{prop}{Proposition}{Propositions}
\crefname{corr}{Corollary}{Corollaries}
\Crefname{corr}{Corollary}{Corollaries}
\title{Non-Convex Bilevel Games \\
with Critical Point Selection Maps}
\author{Michael Arbel and Julien Mairal \\
Univ. Grenoble Alpes, Inria, CNRS, Grenoble INP, LJK, 38000 Grenoble, France\\
 \small{\texttt{firstname.lastname@inria.fr}}} 
\date{
    $^1$ \emph{Inria Grenoble Rh\^{o}ne Alpes}\\%
}
\begin{document}

\maketitle

\begin{abstract}
Bilevel optimization problems involve two nested objectives, where an upper-level objective depends on a solution to a lower-level problem. When the latter is non-convex, multiple critical points may be present, leading to an ambiguous definition of the problem. In this paper, we introduce a key ingredient for resolving this ambiguity through the concept of a \emph{selection} map which allows one to choose a particular solution to the lower-level problem. Using such maps, we define a class of hierarchical games between two agents that resolve the ambiguity in bilevel problems. 
This new class of games requires introducing new analytical tools in Morse theory to extend \emph{implicit differentiation}, a technique used in bilevel optimization resulting from the implicit function theorem. In particular, we establish the validity of such a method even when the latter theorem is inapplicable due to degenerate critical points.
Finally, we show that algorithms for solving bilevel problems based on unrolled optimization solve these games up to approximation errors due to finite computational power. 
A simple correction to these algorithms is then proposed for removing these errors.
\end{abstract}

\section{Introduction}
Bilevel optimization has proven to be a major tool for solving machine learning problems that possess a nested structure such as hyper-parameter optimization \citep{Feurer:2019}, meta-learning \citep{Bertinetto:2018},  
reinforcement learning \citep{Hong:2020a,Liu:2021}, or  dictionary learning \citep{Mairal:2011}. 
Introduced in the field of economic game theory in~\cite{Stackelberg:1934}, a bilevel optimization problem can be understood as a game between a \emph{leader} and a \emph{follower} each of which optimizes their own objective function but where the leader can anticipate {follower}'s actions. 
In the context of machine learning, the leader typically optimizes a hyper-parameter over a validation loss while the follower optimizes the model parameter on a training loss \cite{Lorraine:2020}.  

Bilevel optimization introduces many challenges. In particular, when multiple optimal solutions are available to the follower, the leader would need to optimize a different objective depending on the follower's strategy to select an optimal solution. 
As a result, the bilevel problem becomes ambiguously defined without knowing the follower's strategy~\cite{Liu:2021b}. 
A large body of work on bilevel programs for machine learning gets around these considerations by assuming the follower to have a unique optimal choice, a situation that typically occurs when the follower's objective is strongly convex, leading to efficient and scalable algorithms \citep{Ablin:2020,Arbel:2021a,Blondel:2021,Domke:2012,Gould:2016,Liao:2018,Liu:2021,Shaban:2019}. 
However, in many machine learning applications, the strong convexity of the follower's objective is an unrealistic assumption. This is particularly the case in the context of deep learning, where the follower's objective, the training loss, can be highly non-convex in the parameters of the model and can have regions of flat optima due to symmetries and other degeneracies \cite{Draxler:2018,Li:2018d}. 

In the literature on mathematical optimization, the ambiguity in bilevel problems is often resolved by making an additional assumption on the follower's strategy for choosing their optimal solution. In particular, two problems are often considered: \emph{optimistic and pessimistic bilevel programs}, see~\cite{Dempe:2007}. Both problems rely on two assumptions: (i) the follower is using a strategy for selecting a solution to their problem that is either improving or degrading the leader's objective and (ii) the leader knows exactly what strategy the follower is using.    
These assumptions are strong from a game-theoretical perspective and often unrealistic for machine learning problems such as hyper-parameter optimization. 
Still, optimistic/pessimistic bilevel games are well defined and early works have proposed several algorithms to solve them with strong convergence guarantees~\citep{Ye:1997,Ye:1995,Ye:1997a}. Yet, these algorithms are often ill-suited to large-scale and high-dimensional problems arising in machine learning applications as they rely on second-order optimization methods such as Newton's method ~\cite{Guo:2015}.
For this reason, scalable first-order algorithms for such games have been proposed recently
\cite{Liu:2021f,Liu:2021b}. 

However, many of the best-performing approaches for hyper-parameter optimization rely neither on an optimistic nor a pessimistic formulation of the bilevel problem \citep{Vicol:2021}. 
Instead, they often rely on algorithms initially designed for bilevel problems with strongly convex lower objectives even though the convexity assumption does not hold \cite{Lorraine:2020}.
Consequently, these algorithms are solving a seemingly ill-defined bilevel program due to the ambiguity in the way the follower selects their solution. 
However, their ability to provide models with good empirical performance raises the question of whether these algorithms are solving another class of well-defined hierarchical problems beyond optimistic and pessimistic bilevel programs that are still relevant for machine learning.

In this work, we answer the above question by introducing 
\emph{Bilevel Games with Selection} (BGS), a class of games between two agents: a leader and a follower, where the leader uses a mechanism for anticipating the solution of the follower without knowing the exact follower's strategy.
We define such a mechanism using the notion of a \emph{selection}, which is simply a map for selecting a particular solution to the follower's objective given the current state of the game. 
In particular, BGS recovers a usual bilevel program when the follower's objective admits a unique solution.
By playing a BGS, the agents seek an equilibrium point for which each of their objectives ceases to vary. The equilibria are completely determined by the \emph{selection} thus resulting in a well-defined problem.

When the selection is differentiable, the equilibrium point can be characterized by a first-order optimality condition which enables gradient-based approximations.
More precisely, we show that \emph{implicit differentiation} \cite{Pedregosa:2016}, which, a priori, is only valid when the critical points of the follower's objective are non-degenerate, remains applicable for solving BGS even when these critical points are degenerate. 
To this end, we consider a general construction of the selection as the limit of a gradient flow of the follower's objective and prove the differentiability of such a selection near local minimizers, provided the follower's objective satisfies a generalization of the 
\emph{Morse-Bott property} \cite{Austin:1995,Feehan:2020a}.
We then characterize the differential of the selection as a solution to a linear system thus extending implicit differentiation to degenerate critical points. 
Finally, we leverage this characterization to show that popular algorithms based on iterative differentiation (ITD) \cite{Baydin:2018} find fixed points approximating the BGS's equilibria up to approximation errors. We then introduce a simple corrective term to these algorithms based on implicit differentiation to remove these errors.

\section{Related Work}

\paragraph{Iterative/Unrolled optimization (ITD)} is a class of methods approximating the lower-level solution map by a differentiable function obtained through successive gradient updates \cite{Baydin:2018}. 
When the lower-level objective is strongly convex, these algorithms solve a well-defined bilevel problem up to an error that is controlled by increasing the computational budget for the approximate solution~\cite{Ji:2021a}. 
Our analysis suggests a simple algorithmic correction to these approaches which can result in solutions to a bilevel game with a constant budget for the approximate solution.

\vsp
\paragraph{Approximate Implicit Differentiation (AID)}
is a class of methods approximating the variations of the lower-level solution map using the Implicit Function theorem \cite{Franceschi:2018,Ghadimi:2018,Pedregosa:2016,Rajeswaran:2019}. 
The non-degeneracy requirement under which the latter theorem holds restricts the applicability of AID to,  essentially, strongly convex lower-level objectives. 
These algorithms admit fixed points that match the solutions to the bilevel problem   
 \citep{Ghadimi:2018,Hong:2020a,Ji:2021,Ji:2021a}. As such, they typically require a smaller computational budget than ITD \cite{Arbel:2021a,Ji:2021a}. 
 Recently, \cite{Bolte:2021,Bolte:2022,Bolte:2022a} extended AID to non-smooth objectives while still requiring non-degenerate critical points. The present work is complementary to these works as it extends AID to smooth objectives that have possibly degenerate critical points. 

\vsp
\paragraph{Optimistic and pessimistic bilevel optimization.}  
When the lower-level objective is non-convex, the ambiguity of the problem arising from the multiplicity of the lower-level solutions can be resolved by optimizing the upper-level objective over all such possible solutions \cite{Wiesemann:2013,Zemkoho:2016}. 
The \emph{optimistic} and \emph{pessimistic} problems arise when either minimizing or maximizing the upper-level over all such lower-level solutions. 
Early works proposed to solve these problems using exact penalization \citep{Ye:1997a}, second-order optimization \citep{Ye:1997,Ye:1995} or smoothing method \citep{Xu:2014}. However, these approaches are hard to scale to the high dimensional problems arising in machine learning.
More recently, \cite{Liu:2021b,Liu:2021f} considered first-order methods based on unrolled optimization or interior-point methods for solving optimistic bilevel problems and provided approximation guarantees. 
However, as shown in \cite{Vicol:2021}, most practical applications to bilevel optimization rely on a formulation that goes beyond optimistic or pessimistic formulations. 
The present work departs from these approaches and instead introduces a bilevel game that is more tractable to solve. We show that popular bilevel algorithms, such as unrolled optimization, yield approximations of these games.

\section{Non-Convex Bilevel Optimization with Selection 
}
{\bf Notations.} Define $\X \!=\! \R^p$ and $\Y \!=\! \R^d$ for some positive integers $p$ and $d$. We consider two real valued functions $f$ and $g$ defined on $\X{\times} \Y$ and assume $g$ to be twice-continuously differentiable. 

\subsection{Background on Bilevel Optimization}\label{sec:background}
A bilevel program is an optimization problem where an upper-level objective $f$ defined over a set $\X\times\Y$ of variables $(x,y)$ is optimized in the first variable $x$ under the constraint that the second variable $y$ is optimal for a lower-level objective $y\mapsto g(x,y)$ depending on the upper-variable $x$.   
When $g(x,.)$ admits a unique minimizer denoted by $y^{\star}(x)$, which is the case if $y\mapsto g(x,y)$ is strongly convex, the bilevel problem is well-defined and can be expressed as:
\begin{align}\tag{BP}\label{eq:unique_bilevel_problem}
	\min_{x\in \X} f(x,y^{\star}(x)), \qquad	y^{\star}(x):=\arg\min_{y\in \Y} g(x,y).
\end{align}
When $g$ is non-convex, the set of minimizers $T(x) {:=} \arg\min_{y}g(x,y)$ may contain more than one element making \cref{eq:unique_bilevel_problem} ambiguous. 
A possible approach for resolving the ambiguity is to adopt a game-theoretical point of view, where a lower-level agent uses a particular strategy for selecting a solution in $T(x)$. 
For instance, in \emph{pessimistic} bilevel games, the lower agent chooses a minimizer of $g(x,.)$ that maximizes $f(x,.)$ while the upper agent minimizes the resulting worst-case loss $F$ in $x$:
\begin{align}\tag{pessimistic-BG}\label{eq:pessimistic}
   \text{(\UL):}~~~ \min_{x\in \X} F(x), \qquad \text{and}~~~     \text{(\LL):}~~~~F(x):=\max_{y \in \Y} f(x,y)~~~\text{s.t.}~~~ y \in T(x).
\end{align}
Similarly, an \emph{optimistic} bilevel game can be obtained by replacing maximization with minimization so that both agents cooperate.
While these approaches are highly relevant from a game-theoretical point of view, many machine learning applications do not rely on a pessimistic/optimistic bilevel formulation.
For instance, for hyper-parameter optimization, the lower agent may have access to training data, but it should not have access to the validation data processed (used in~$f$) by the upper agent. 
Instead, a popular approach consists of applying algorithms designed for bilevel
programs that admit unique solutions for the lower problems, even though this assumption may not hold in practice~\cite{Lorraine:2020}. 
In the next section, we introduce a class of games that allow characterizing the equilibrium points obtained by these popular algorithms while resolving the ambiguity of non-convex bilevel problems and bypassing the limitations of pessimistic/optimistic bilevel formulations.
\subsection{Bilevel Games with Selection (BGS)}\label{sec:BGS} 
We introduce a new class of nested games for bilevel optimization with two
agents, a \emph{leader} and a \emph{follower}.
The \emph{follower} minimizes the lower-level objective $g$ w.r.t. a variable $y$ in $\Y$. Similarly, the \emph{leader} minimizes the upper-level objective $f$ w.r.t. a variable $x\in \X$ while anticipating the \emph{follower}'s solution. 
More precisely, the \emph{leader} has access to a \emph{selection map}: $\phi: \X {\times} \Y \rightarrow \Y$ to choose a unique critical point $\phi(x,y)$ of $y\mapsto g(x,y)$ given the current state of the game $(x,y)\in\X {\times} \Y$ thus allowing the leader to anticipate the follower's solution. Typically, the selection $\phi(x,y)$ represents the critical point that is \emph{selected} by an optimization process of $g(x,.)$ starting from an initial condition $y$ (\eg, the limit of a gradient flow for a gradient descent algorithm). 
The  Bilevel Game with Selection (BGS) is therefore defined as the following interdependent optimization problems: 
\begin{align}\tag{BGS}\label{eq:ABG}
\text{(\UL):}\quad  \min_{x\in \X} \mathcal{L}_{\phi}(x,y):= f(x,\phi(x,y)), \qquad\qquad \text{(\LL):} \quad \min_{y\in\Y} g(x,y).
\end{align} 
Given a selection map $\phi$, the game \cref{eq:ABG} is well-defined and does not suffer from the ambiguity problem in \cref{eq:unique_bilevel_problem}.
The explicit dependence of $\phi(x,y)$ on the initialization $y$ might seem unnecessary at first, as one could simply fix $y$ to some value $y_0$ and consider only the dependence on the variable $x$. 
However, such a dependence on the variable $y$ allows performing \emph{warm-start} \citep{Vicol:2021}, where the lower-level problem is optimized starting from a previous state of the game, thus resulting in computational savings \cref{fig:BGS}. We provide below a formal definition for the selection map. 
\begin{defn}[{\bf \Selection map}]\label{def:selection}
   Given a continuously differentiable function $g: \X \times \Y \to \R$, the map $\phi : \X \times \Y \rightarrow \Y$ is a selection if it satisfies the following properties for any pair $(x,y)\in \X\times \Y$:
\begin{enumerate}
   \item {\bf Criticality:}  The element $y'=\phi(x,y)$ is a critical point of $g(x,.)$, i.e. $\partial_{y} g(x,y')=0$.
   \item {\bf Self-consistency:} If $y$ is a critical point of $g(x,.)$ i.e.  $\partial_y g(x,y)=0$, then $\phi(x,y)=y$.  
\end{enumerate}
\end{defn}
 \emph{Criticality} ensures the leader possesses a hierarchical advantage in that they know what are the optimal choices accessible to the follower. 
\emph{Self-consistency} implies that the leader makes a guess that is not contradicting the current choice $y$ of the follower. 
Both properties ensure the leader can rationally anticipate the follower's actions from the current state of the game $(x,y)$. 
We will see in \cref{sec:selection_morse_bott}, under mild assumptions on $g$, that it is always possible to define a selection $\phi$ as the limit of a continuous-time gradient flow of $y\mapsto g(x,y)$ initialized at $y$. Moreover, as we discuss later in \cref{sec:algorithms}, the selection does not need to be explicitly constructed for solving \cref{eq:ABG} in practice. It can be simply related to the implicit bias of the algorithm used for solving the follower's problem.
\vsp
\paragraph{Connection to \cref{eq:unique_bilevel_problem}.} When the lower-level objective $y\mapsto g(x,y)$ admits a unique minimizer $y^{\star}(x)$, 
it is easy to check that there exists a unique \selection map $\phi$ satisfies $\phi(x,y) {=}y^{\star}(x)$. Hence, \cref{eq:ABG} recovers the bilevel problem in \cref{eq:unique_bilevel_problem} as a particular case.

\vsp
\paragraph{Connection to \cref{eq:pessimistic} or the optimistic variant.}
Key differences between~\cref{eq:ABG} and pessimistic or optimistic games is that (i) the follower has never access to the upper function~$f$ with~\cref{eq:ABG}, which matches practical hyper-parameter optimization applications where $f$ relies on a validation dataset, whereas $g$ relies on a distinct training set; (ii) the leader in~\cref{eq:pessimistic} does not take into account the strategy used by the follower, whereas the leader in~\cref{eq:ABG} makes more rational choices by guessing the strategy of the follower through the selection map~$\phi$.

\vsp
\begin{figure}
\center
	\includegraphics[width=.42\linewidth,height=0.3\linewidth]{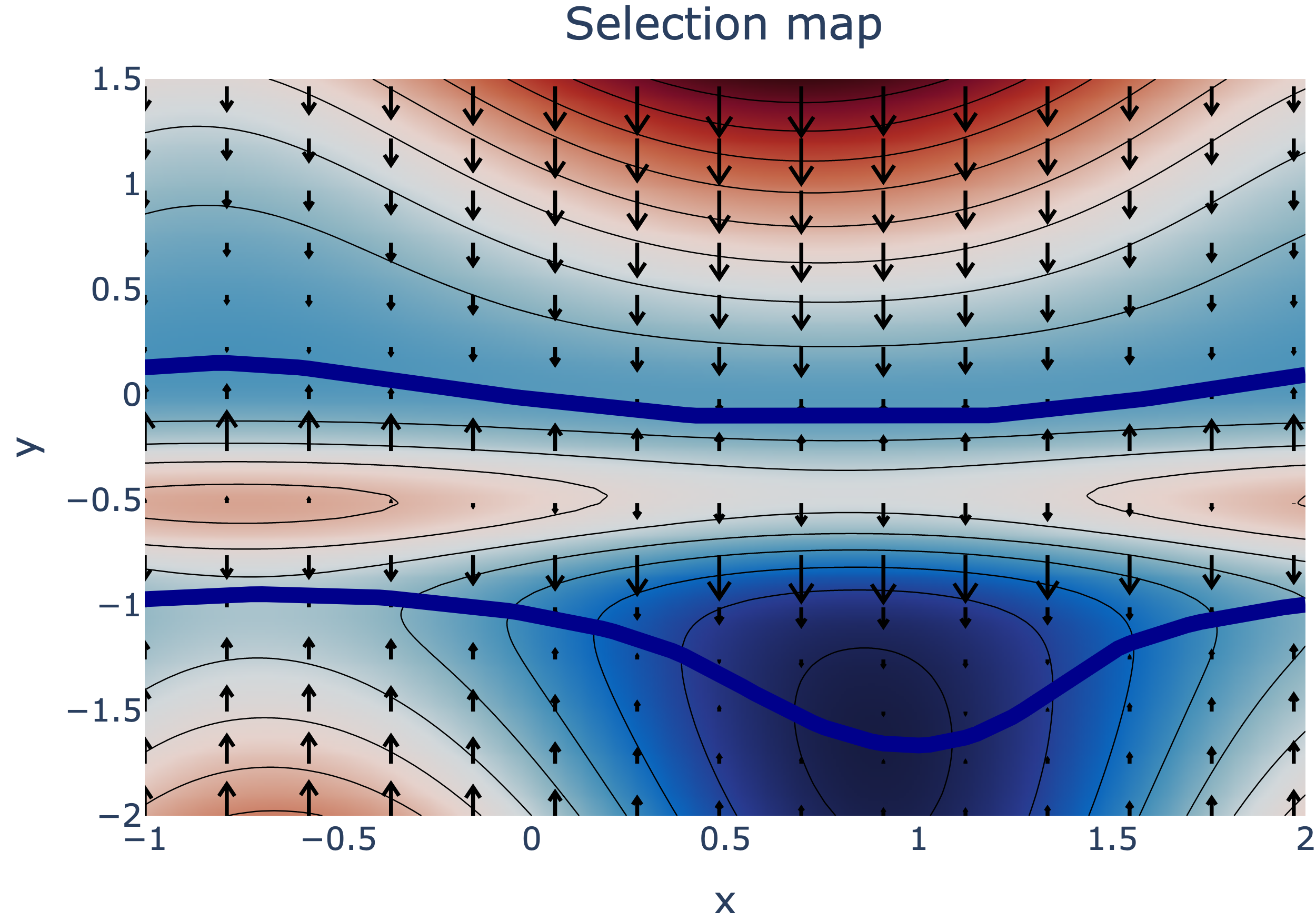}%
	\includegraphics[width=.42\linewidth]{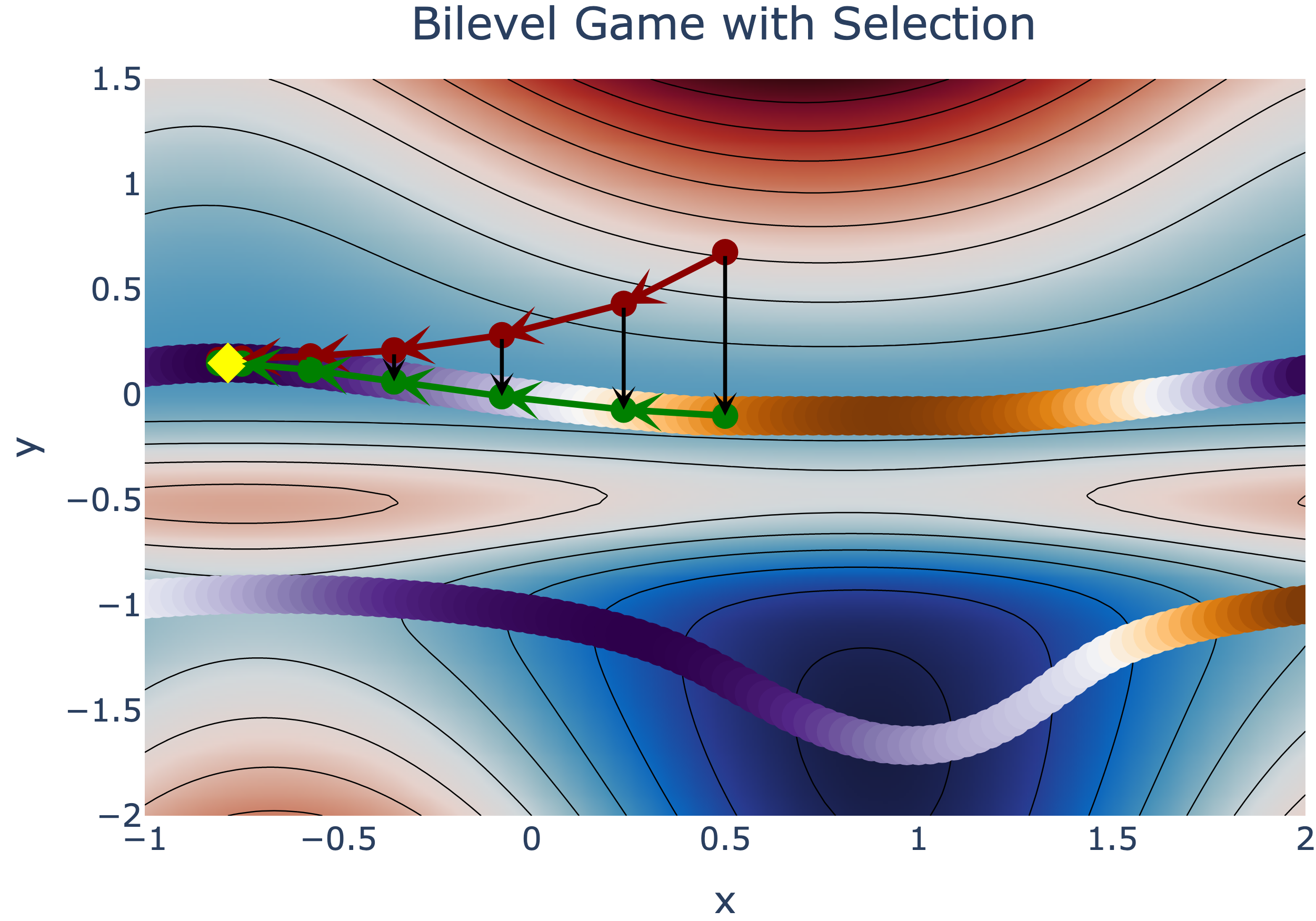}%
	\includegraphics[width=.16\linewidth]{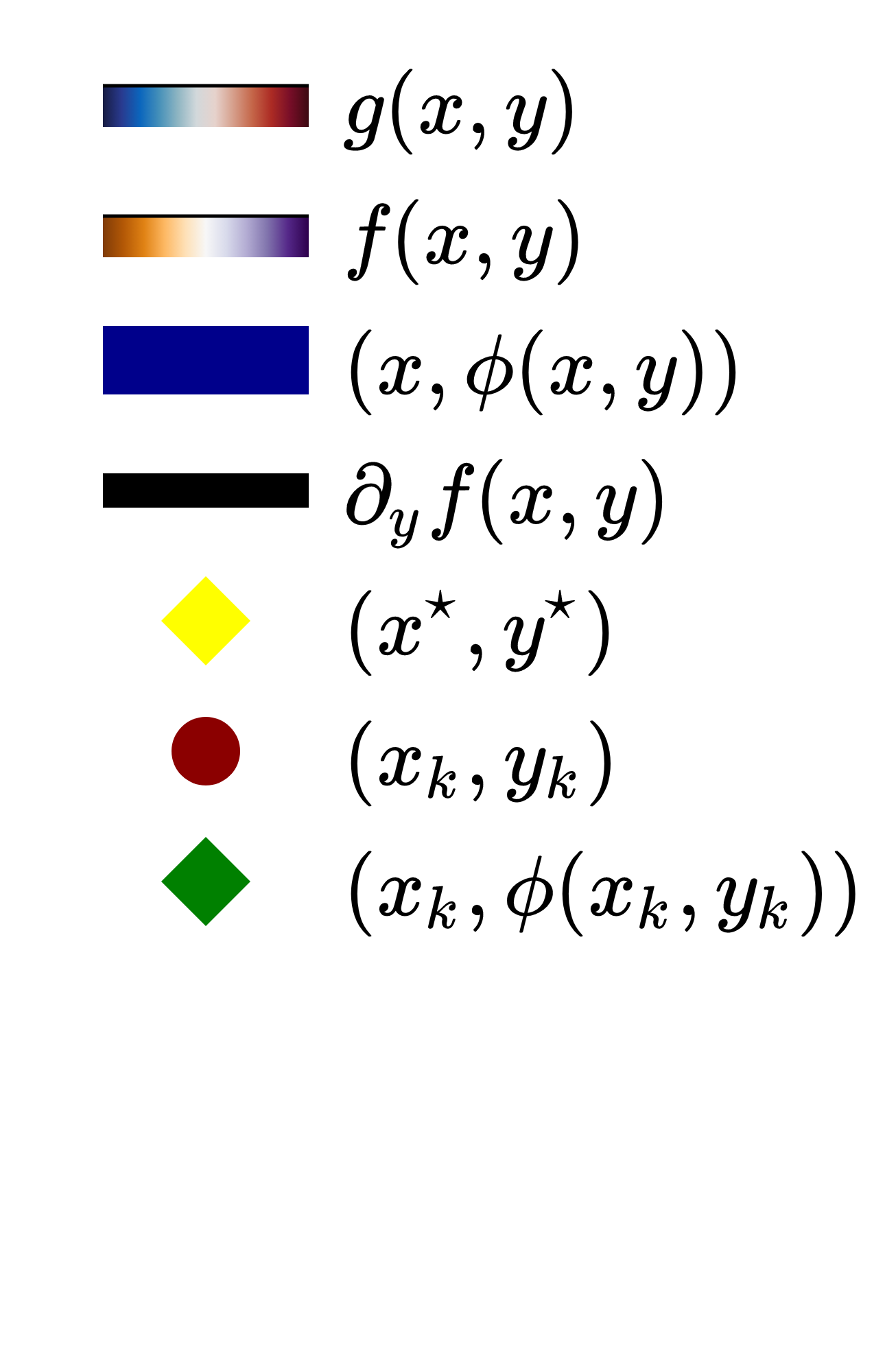}%
	\caption{\small Left: Heatmap of the lower-level objective $g(x,y)$. The local minimizers of $y\mapsto g(x,y)$ are represented by the 'critical lines' in blue. The selection map $\phi(x,y)$ is defined by following the vector field $\partial_y g(x,y)$, in black.
	Right: Iterates $(x_k,y_k)$ (in red) obtained by playing a BGS. The follower finds the next update $y_k$ by optimizing $y\mapsto g(x_k,y)$ starting from previous iterate $y_{k-1}$. The leader finds the next update $x_k$ by optimizing the upper-level objective $f$ along the 'critical lines' (iterates in green).  
	}
\label{fig:BGS}
\end{figure}
\paragraph{First-order equilibrium conditions.}
The agents can play the game \cref{eq:ABG} by successively taking actions $(x_k,y_k)$ to improve their own objectives   $x\mapsto \mathcal{L}_{\phi}(x,y_{k-1})$ and $y\mapsto g(x_k,y)$, by hoping the strategy will reach an equilibrium pair $(x^*,y^*)$ \cref{fig:BGS}(Right). 
In the case where $f$, $g$ and $\phi$ are differentiable at $(x^{*},y^*)$, the equilibrium pair is characterized by a first-order stationary condition: 
\begin{align}\tag{SC}\label{eq:SC}
	\partial_x \mathcal{L}_{\phi}(x^{\star},y^{\star})= \partial_x f(x^{\star},y^{\star}) + \partial_x\phi(x^{\star},y^{\star})\partial_y f(x^{\star},y^{\star}) = 0,\qquad \partial_y g(x^{\star},y^{\star})=0.
\end{align}
When $g$ is smooth and strongly convex in $y$, the implicit function theorem~\cite[Theorem 5.9]{Lang:2012} ensures that $\phi$ is differentiable and provides an expression of $\partial_x\phi(x^{\star},y^{\star})$ as a solution to a linear system which key for implicit differentiation.
This allows to devise efficient algorithms using estimates of the gradient~$\partial_x\mathcal{L}_{\phi}$, see, \eg, \cite{Arbel:2021a}.  
However, extensions of the implicit function theorem, such as the \emph{constant rank theorem} ~\citep[Theorem 4.12]{Lee:2003}, for cases where $g$ has possibly degenerate critical points require strong assumptions on $g$ which are unrealistic in machine learning. 
In the next section, we provide new analytical tools for extending  \emph{implicit differentiation} by studying the differentiability of a family of \selection maps corresponding to a large class of functions $g$. The resulting expression will be key for devising first-order methods to solve \cref{eq:ABG}, as discussed in \cref{sec:algorithms}.
\section{Selection Based on Gradient Flows for Parameteric Morse-Bott Functions}\label{sec:selection_morse_bott}
In this section, we extend implicit differentiation to a class of functions with possibly degenerate critical points. 
To this end, we consider a particular selection $\phi(x,y)$ obtained as the limit of a gradient flow $(\phi_t(x,y))_{t\geq 0}$ of $g(x,.)$ initialized at~$y$. 
We then study the \emph{differentiability} w.r.t. $x$ of the selection by analyzing the dynamics of such a gradient flow.
For general non-convex functions, the selection might be non-differentiable since a small perturbation to the parameter $x$ can change the geometry of the critical points of $g$, causing the perturbed flow to move away from the non-perturbed one (see \cref{fig:morse_bott}).  
We are therefore interested in functions $g$  preserving the local geometry near critical points as $x$ varies. In \cref{sec:param_morse-bott}, we introduce such a class of functions called parametric Morse-Bott functions, which covers many practical machine learning models. 
We then show, in \cref{sec:smoothness_selections}, that the selection resulting from such a function is differentiable near local minima.
\begin{figure}
	\includegraphics[width=.42\linewidth,height=.3\linewidth]{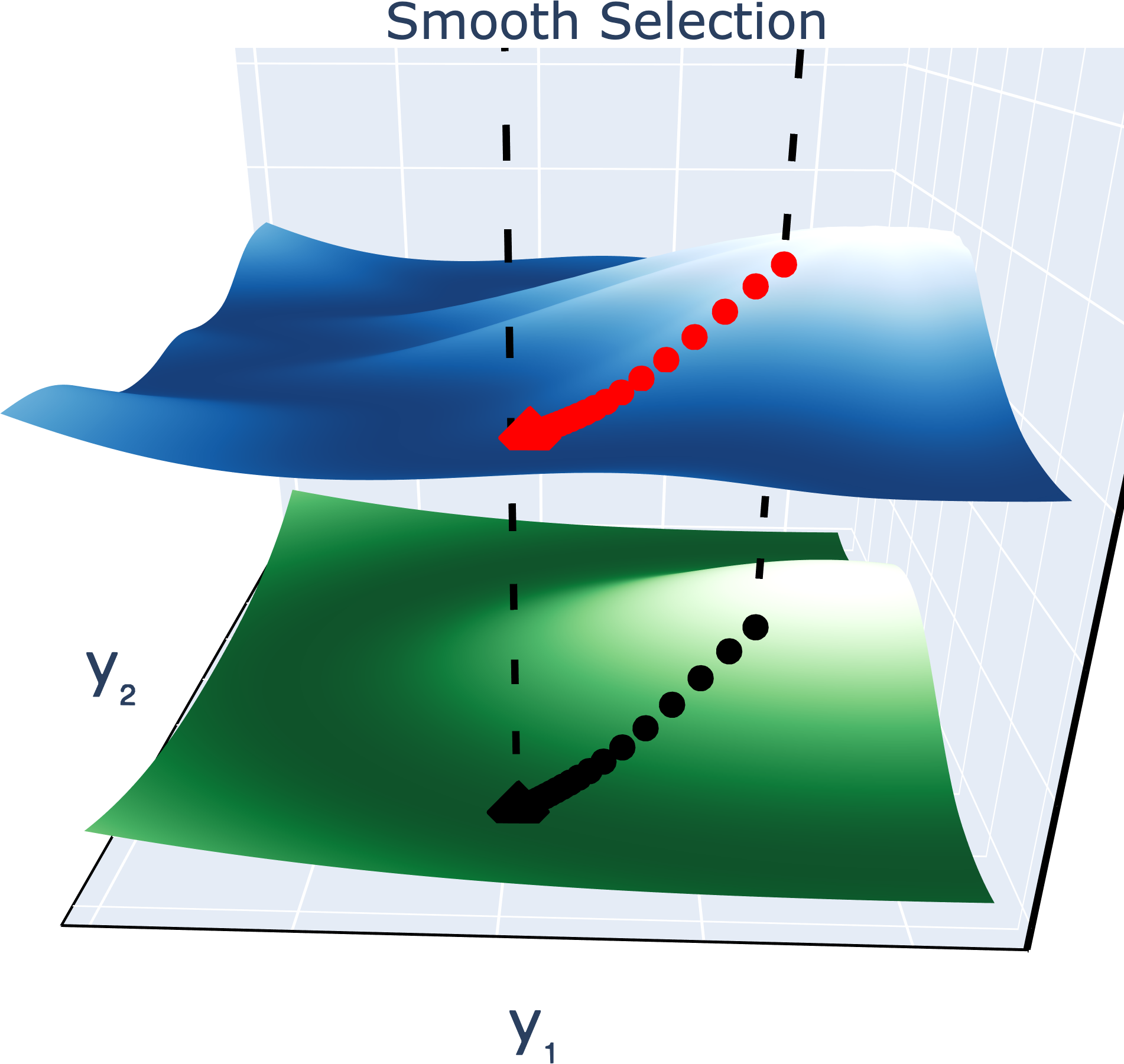}%
	\includegraphics[width=.42\linewidth,height=.3\linewidth]{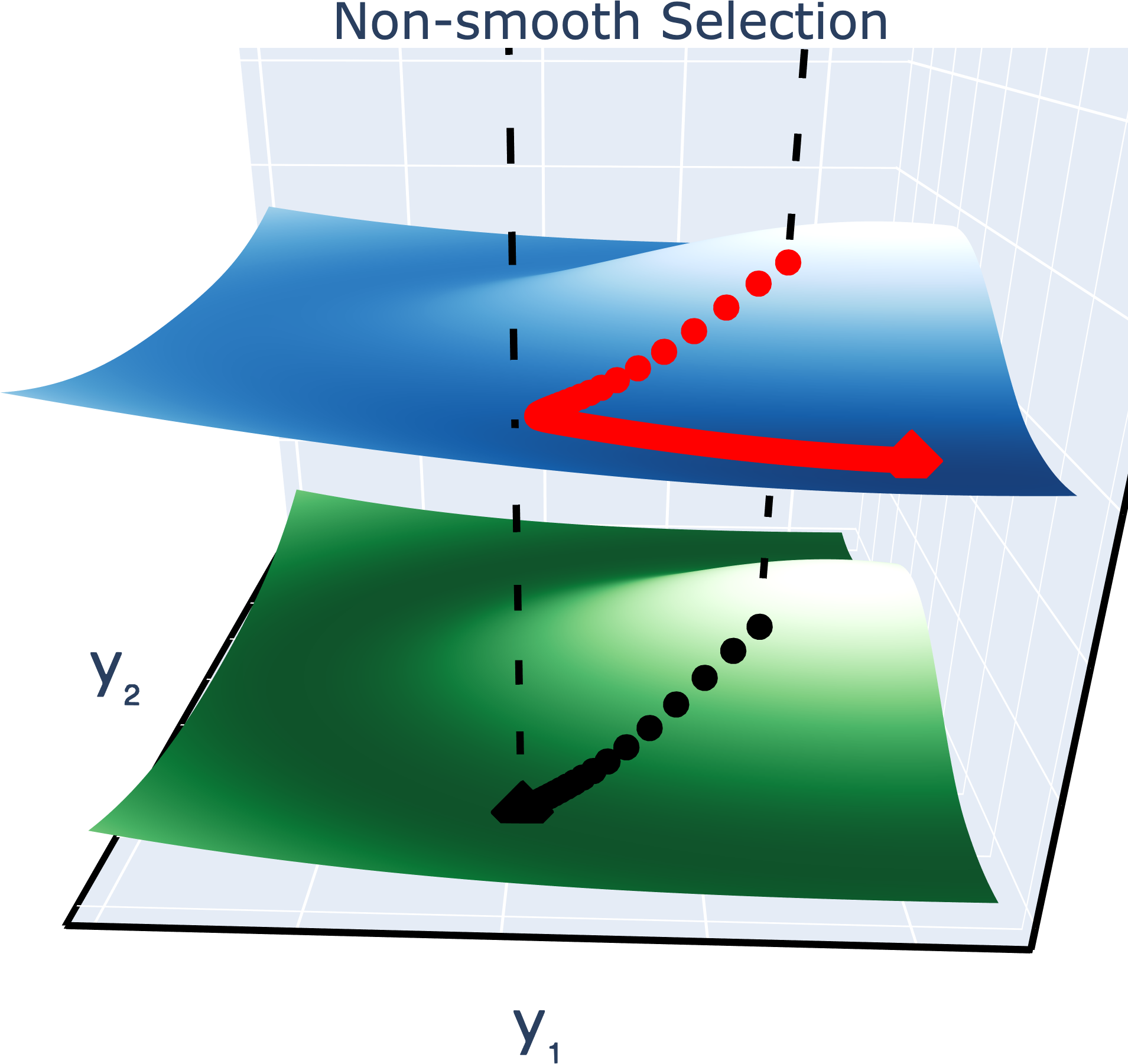}%
	\includegraphics[width=.16\linewidth]{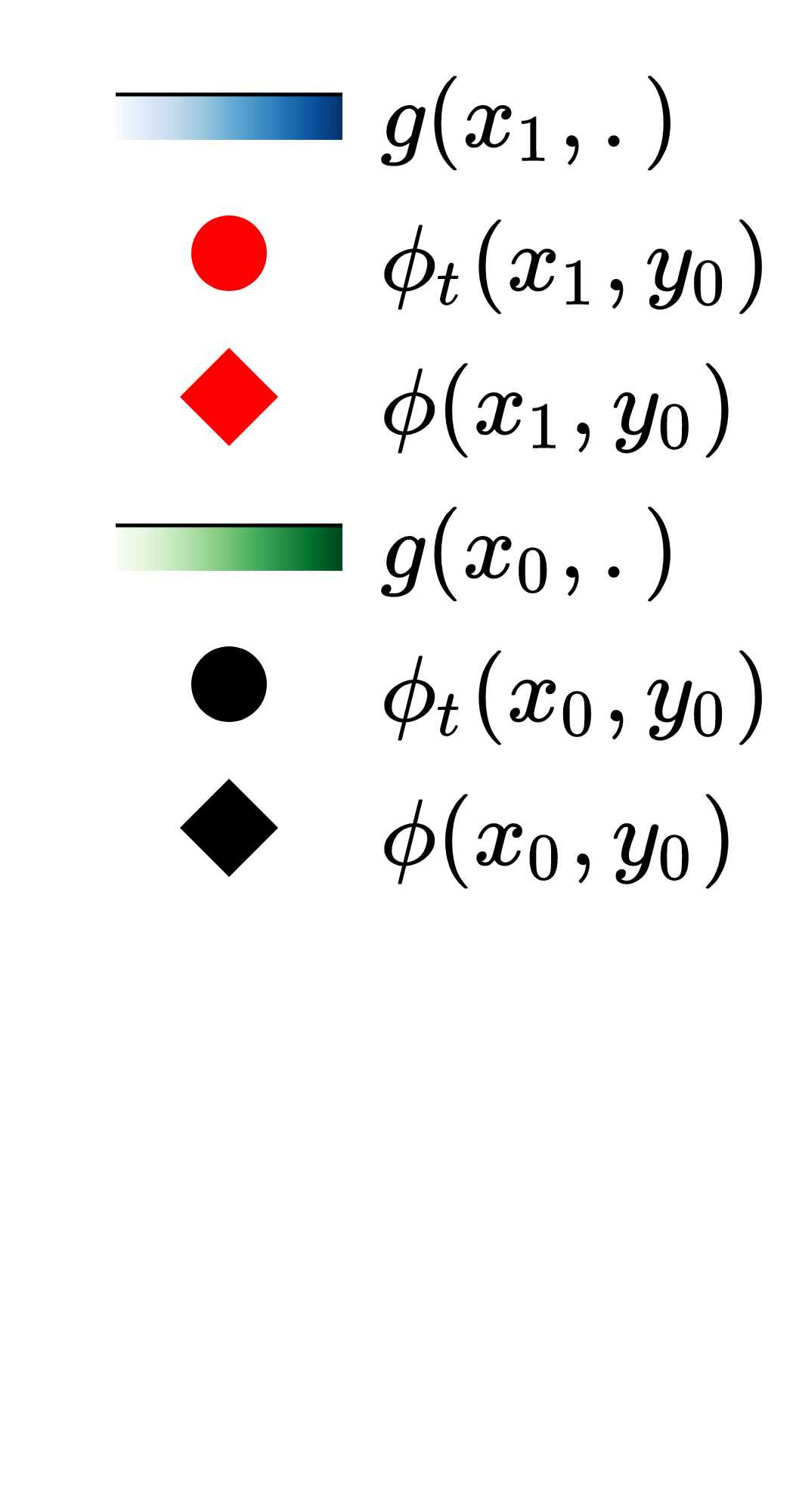}
	\caption{\small Two examples of functions $g$ with different behaviors of the gradient flow under perturbations of $x$. In both figures, the green surface represents a function $y\mapsto g(x_0,y)$ with $y\in \R^2$ resembling a \emph{Mexican hat} which has a manifold of (degenerate) local minimizers (in dark green). The blue surfaces represent \emph{deformed} versions of the Mexican hat function when the parameter $x$ is slightly perturbed $x_1 {\approx} x_0$. Depending on the deformation, the resulting function $y{\mapsto} g(x_1,y)$ can either preserve the same type of critical points as the unperturbed function, i.e. local minimizers remain local minimizers (Left), or change their type, i.e.: local minimizers can become saddle-points (Right). Left: the selection behaves smoothly as a function of the deformation. Right: the selection is discontinuous since the gradient flow is pushed away from $\phi(x_0,y_0)$ which is deformed into a saddle point.
	}
\label{fig:morse_bott}
\end{figure}
\subsection{Parameteric Morse-Bott Functions}\label{sec:param_morse-bott}
We introduce parametric Morse-Bott functions, a class of parametric functions $g:\X \times \Y\rightarrow \R$ with parameter $x$ in $\X$ extending the more familiar notion of  Morse-Bott functions (\cref{sec:Morse-Bott-functions},  \citep{Feehan:2020a}) to account for the effect of the parameter $x$ on the geometry of critical points.
\begin{defn}[\bfseries Parametric Morse-Bott function.]\label{def:parameteric_morse_bott}
Let $g:\X\times \Y$ be a real-valued twice continuousely differentiable function and define the set of \emph{augmented critical points } $\mathcal{M}$ as follows:
\begin{align}\label{eq:augmented_critical_points}
\mathcal{M} := \braces{ (x,y)\in \X\times \Y ~\middle|~ \partial_y g(x,y)=0} 
\end{align}
Let $(x_0,y_0)\in\mathcal{M}$. We say that $g$ is Morse-Bott at $y_0$ w.r.t. $x_0$, if there exists an open neighbordhood $\mathcal{V}$ of $(x_0,y_0)$ s.t. the intersection $\mathcal{M}\cap\mathcal{V}$ is a $C^2$-connected sub-manifold of $\X{\times} \Y$ of dimension:
\begin{align}
   \dim(\mathcal{M}\cap\mathcal{V}) = \dim(\X) + dim\parens{\text{Ker}(\partial^2_{yy} g(x_0,y_0))}.
\end{align}
$g$ is a parametric Morse-Bott function  if for any $(x_0,y_0){\in} \mathcal{M}$, $g$ is Morse-Bott at $y_0$ w.r.t. $x_0$.
\end{defn} 
The functions in \cref{def:parameteric_morse_bott} satisfy a condition that is stronger than simply satisfying the Morse-Bott property at any parameter value $x$ (\cref{def:morse_bott} of \cref{sec:Morse-Bott-functions}). Indeed, we show in \cref{prop:pointwise_morse_bott} of \cref{sec:properties_parameteric_morse_bott} that, for any $x_0\in \X$, the function $y\mapsto g(x_0,y)$ is a Morse-Bott function, meaning that the critical set $C(x_0)$ of $y\mapsto g(x_0,.)$ near a critical point $y_0$ is locally a  $C^2$ connected sub-manifold of $\Y$ of dimension equal to the dimension of the null-space of the Hessian $\partial^2_{yy} g(x_0,y_0)$.  
For conciseness, we introduce the following assumption which ensures $g$ satisfies the condition of \cref{def:parameteric_morse_bott} as well as possesses continuous third-order derivatives.
\begin{assump}[\bfseries Parameteric Morse-Bott property]\label{assumpt:morse-bott}
	The function $g$ is at least three-times continuously differentiable and is a parameteric Morse-Bott function as defined in \cref{def:parameteric_morse_bott}.
\end{assump}

\vsp
\paragraph{Examples of parametric Morse-Bott function.}
A notable class of parametric Morse-Bott functions is the one containing all twice-continuously differentiable functions that are strongly convex or, more generally, possess only non-degenerate critical points in the second variable as shown in \cref{prop:morse-functions-with-parameters} of \cref{sec:properties_parameteric_morse_bott}. %
Note that parametric Morse-Bott functions need not be convex and can have multiple (possibly degenerate) local minima, saddle-points, and local maxima. 

Another class of functions, this time with possibly degenerate critical points,  are those that can be expressed as a composition of some Morse-Bott function $h$ and a family $(\tau_x)_{x\in \X}$ of diffeomorphisms on $\Y$ parameterized by $x$, i.e. $g(x,y) {=} h(\tau_x(y))$. This particular form is relevant in generative modeling where the diffeomorphisms are defined using  normalizing flows of parameter $x$ \cite{Rezende:2015}.  

The condition in \cref{def:parameteric_morse_bott} ensures that the degree of freedom of the augmented critical set $\mathcal{M}$ is exactly determined by the degree of freedom of the parameter $x$ and the degree of degeneracy of the Hessian at a critical point $y$.
This condition is precisely what guarantees the stability of the local shape of critical points when the parameter $x$ varies as we formalize through the next theorem.
\begin{thm}[\bfseries Morse-Bott lemma with parameters]\label{thm:morse-bott_lemma}
   Let $g$ be a function satisfying \cref{assumpt:morse-bott}. Let $(x_0,y_0)$ in $\mathcal{M}$ be an augmented critical point of $g$. Denote by $\mathcal{K}$ the null space of the Hessian $A_0{:=}\partial^2_{yy} g(x_0,y_0)$ and by $\mathcal{K}^{\perp}$ its orthogonal complement in $\Y$. Let $J_0$ be a diagonal matrix with diagonal element given by the sign of the non-zero eigenvalues of $A_0$.    
	Then, there exists open neighborhoods $\mathcal{U}$ and $\mathcal{V}$ of $(x_0,0_{\mathcal{K}},0_{\mathcal{K}^{\perp}})$ and $(x_0,y_0)$  in $\X {\times} \mathcal{K}{\times} \mathcal{K}^{\perp}$ and $\X{\times} \Y$, and a diffeomorphism $\psi: \mathcal{U}\rightarrow \mathcal{V}$ preserving the first variable, i.e. $\psi(x,r,w) {=} (x,y)$ for any $(x,r,w)\in \mathcal{U}$, with $\psi(x_0,0_{\mathcal{K}},0_{\mathcal{K}^{\perp}}){=}(x_0,y_0)$ such that $g$ admits the representation:
	\begin{align}
		g(\psi(x,r,w)) = g(\psi(x,0_{\mathcal{K}},0_{\mathcal{K}^{\perp}})) + \frac{1}{2}w^{\top}J_0w, \qquad \forall (x,r,w)\in \mathcal{U}.
	\end{align}
\end{thm}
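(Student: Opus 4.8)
The plan is to put $g$ into normal form by composing a sequence of local changes of coordinates, each fixing the first variable $x$, following the classical proof of the Morse--Bott lemma (splitting lemma, then parametric Morse lemma) but carrying the parameter $x$ along throughout. Since $A_0=\partial^2_{yy}g(x_0,y_0)$ is the Hessian of a $C^3$ function it is symmetric, so $\Y=\mathcal{K}\oplus\mathcal{K}^{\perp}$ orthogonally, $A_0$ maps $\mathcal{K}^{\perp}$ into itself invertibly, and I write $y=y_0+r+w$ with $r\in\mathcal{K}$, $w\in\mathcal{K}^{\perp}$; let $\Pi$ be the orthogonal projection of $\Y$ onto $\mathcal{K}^{\perp}$. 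First I would do the \emph{splitting step}. The map $(x,r,w)\mapsto\Pi\,\partial_y g(x,y_0+r+w)$ has $w$-differential $A_0|_{\mathcal{K}^{\perp}}$ at $(x_0,0,0)$, which is invertible, so the implicit function theorem gives a unique $C^2$ solution $w=w^{\star}(x,r)$, with $w^{\star}(x_0,0)=0$, of $\Pi\,\partial_y g=0$ near $(x_0,0,0)$. Then $\psi_1(x,r,w):=(x,\,y_0+r+w^{\star}(x,r)+w)$ is a local $C^2$-diffeomorphism preserving $x$, with $\psi_1(x_0,0,0)=(x_0,y_0)$, after which the $\mathcal{K}^{\perp}$-partial gradient $\partial_w(g\circ\psi_1)$ vanishes on $\{w=0\}$. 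Taylor's formula in $w$ with integral remainder (legitimate since $\partial^2_{yy}g$ is $C^1$, so the $w$-Hessian of $g\circ\psi_1$ is $C^1$) then yields
\[
 (g\circ\psi_1)(x,r,w)=h(x,r)+\tfrac12\,w^{\top}B(x,r,w)\,w,
\]
with $h(x,r):=(g\circ\psi_1)(x,r,0)$, $B$ symmetric and $C^1$, and $B(x_0,0,0)=A_0|_{\mathcal{K}^{\perp}}$ invertible.

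Next I would use the parametric Morse--Bott hypothesis to show that $h$ is locally independent of $r$. Since criticality of $y\mapsto g(x,y)$ is preserved under the fibrewise diffeomorphism $\psi_1$, the augmented critical set corresponds, in the coordinates $(x,r,w)$, to $\{\partial_w(g\circ\psi_1)=0\}\cap\{\partial_r(g\circ\psi_1)=0\}$. Now $\partial_w(g\circ\psi_1)(x,r,0)=0$ and its $w$-derivative at $w=0$ is $\partial^2_{ww}(g\circ\psi_1)(x,r,0)$, invertible near the base point, so by the implicit function theorem $w=0$ is, near $(x_0,0,0)$, the only solution of $\partial_w(g\circ\psi_1)=0$; the remaining equation is $\partial_r h(x,r)=0$. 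Hence $\psi_1^{-1}(\mathcal{M})$ is locally $\{(x,r,0):\partial_r h(x,r)=0\}$, which by \cref{def:parameteric_morse_bott} is a $C^2$ submanifold of dimension $\dim(\X)+\dim(\mathcal{K})$ — the \emph{full} dimension of the slice $\X\times\mathcal{K}\times\{0\}$ containing it. A submanifold of full dimension is open, so $\{(x,r):\partial_r h(x,r)=0\}$ is open; being also closed and containing $(x_0,0)$, it fills a connected neighborhood of $(x_0,0)$, whence $\partial_r h\equiv0$ there and $h(x,r)=h(x,0)=:\bar h(x)$. This is the conceptual core: it is the dimension count of \cref{def:parameteric_morse_bott}, not merely a fibrewise Morse--Bott property, that rules out the degenerate behaviour of \cref{fig:morse_bott} (right) and forces the transverse part of $g$ to depend on $x$ only.

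It then remains to reduce $\tfrac12 w^{\top}B(x,r,w)w$ to $\tfrac12 w^{\top}J_0 w$ by a $w$-change of coordinates depending on $(x,r)$, i.e.\ a parametric Morse lemma on $\mathcal{K}^{\perp}$, which I would split into a congruence part and a homotopy part. For the congruence, a Sylvester-type lemma gives a $C^1$ family of invertible matrices $S(x,r)$, $S(x_0,0)=I$, with $B(x,r,0)=S(x,r)^{\top}B(x_0,0,0)S(x,r)$ (because $B(x,r,0)B(x_0,0,0)^{-1}$ stays close to the identity and hence has a square root depending smoothly on it), and a fixed linear isomorphism of $\mathcal{K}^{\perp}$ diagonalizing $A_0|_{\mathcal{K}^{\perp}}$ and rescaling its eigendirections by $|\lambda_i|^{-1/2}$ turns $\tfrac12 w^{\top}B(x,r,0)w$ into $\tfrac12 w^{\top}J_0w$. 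For the homotopy, Moser's trick removes the residual $w$-dependence of $B$: interpolating linearly in $s\in[0,1]$ between $\tfrac12 w^{\top}B(x,r,w)w$ and $\tfrac12 w^{\top}B(x,r,0)w$ and integrating the associated time-dependent vector field — which is $C^1$ and vanishes at $w=0$ — produces a local $C^1$-diffeomorphism in $w$ fixing $0$, with differential the identity at the base point, conjugating the two forms; as all data are $C^1$ in $(x,r)$, this assembles into a fibrewise diffeomorphism $\psi_2(x,r,w)=(x,r,\Phi(x,r,w))$. Setting $\psi:=\psi_1\circ\psi_2$ (with the fixed linear change absorbed into the $w$-coordinate), $\psi$ preserves $x$, is a local diffeomorphism with $\psi(x_0,0_{\mathcal{K}},0_{\mathcal{K}^{\perp}})=(x_0,y_0)$, and on small enough neighborhoods $\mathcal{U},\mathcal{V}$ satisfies $g(\psi(x,r,w))=\bar h(x)+\tfrac12 w^{\top}J_0w$; since $\psi(x,r,0)=\psi_1(x,r,0)=(x,y_0+r+w^{\star}(x,r))$ has $g$-value $h(x,r)=\bar h(x)$, the right-hand side equals $g(\psi(x,0_{\mathcal{K}},0_{\mathcal{K}^{\perp}}))+\tfrac12 w^{\top}J_0w$, which is the claimed identity.

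I expect the main obstacle to be the parametric Morse lemma of the last step: one must keep the coordinate change jointly $C^1$ in $(x,r)$ with only three derivatives of $g$ available, which demands care in controlling the regularity of the remainder $B$, of the smooth Sylvester congruence $S$, and of the Moser flow, and in verifying that the various neighborhoods shrink to common ones around $(x_0,y_0)$. A second, more conceptual subtlety is the middle step, where the seemingly mild manifold-dimension condition of \cref{def:parameteric_morse_bott} is exactly what is needed — without it $h$ could depend on $r$ and no normal form of the stated shape could exist. Everything else is routine implicit-function-theorem bookkeeping.
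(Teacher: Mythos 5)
Your proposal is correct, and its crucial middle step is exactly the paper's: after splitting off the nondegenerate directions, you identify the augmented critical set in the new coordinates with $\{(x,r,0):\partial_r h(x,r)=0\}$, observe via \cref{def:parameteric_morse_bott} that this is a submanifold of the \emph{full} dimension $\dim(\X)+\dim(\mathcal{K})$ of the slice $\{w=0\}$, hence open, and conclude $\partial_r h\equiv 0$ locally; the paper's proof runs the identical dimension count with its sets $A$, $B$, $C$. Where you genuinely diverge is in how the normal form itself is produced. The paper first applies the classical (non-parametric) Morse--Bott lemma to $y\mapsto g(x_0,y)$ alone, obtaining the exact quadratic form $\tfrac12 w^{\top}J_0w$ at the fixed parameter $x_0$, and only then invokes Feehan's Morse lemma with parameters, treating the pair $(x,r)$ as the parameter of a function with nondegenerate $w$-Hessian; both normal-form ingredients are thus cited as black boxes. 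You instead build everything by hand and in the opposite order: an implicit-function-theorem splitting $w=w^{\star}(x,r)$ uniform in $x$, a Taylor remainder $\tfrac12 w^{\top}B(x,r,w)w$, and then a self-contained parametric Morse lemma via a smooth Sylvester congruence plus Moser's trick. Your route is longer and concentrates the technical burden in the last step (keeping the congruence $S$ and the Moser flow jointly $C^1$ with only $C^3$ data, which you rightly flag, and which yields a $C^1$ rather than $C^2$ diffeomorphism --- still sufficient for the statement and for its downstream uses), but it is self-contained and makes transparent that the only place the parametric Morse--Bott hypothesis enters is the openness argument killing the $r$-dependence. The paper's route is shorter at the cost of importing two external normal-form theorems.
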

\cref{thm:morse-bott_lemma}, which is proven in \cref{sec:proof_morse-bott_lemma}, shows that, near an augmented critical point $(x_0,y_0)$, $g$ looks like a quadratic function up to an additive term that depends only on the parameter $x$. Moreover, slightly varying the parameter $x$ does not change the quadratic function and thus preserves the local shape near critical points. 
\cref{thm:morse-bott_lemma} is an extension of the \emph{Morse-Bott lemma} \cite[Theorem 2.10]{Feehan:2020a} to the case when there is a dependence on a parameter $x$. It can also be seen as an extension of the \emph{Morse lemma with parameters}  \cite[Theorem 4]{Feehan:2020a} which allows dependence to a parameter $x$ but requires the critical points to be non-degenerate (invertible matrix $A_0$). To our knowledge, \cref{thm:morse-bott_lemma} is the first result in the literature providing a decomposition of parametric functions with degenerate critical points into the sum of a quadratic non-degenerate term and a singular term depending only on the parameter $x$. %
We present now a corollary of \cref{thm:morse-bott_lemma} which is a strengthened version of the standard {\L}ojasiewicz inequality~\citep{Lojasiewicz:1982} that will be essential for our subsequent analysis.
\begin{prop}[\bfseries Locally Uniform {\L}ojasiewicz gradient inequality]\label{prop:uniform_KL}
	Let $g$ be a function satisfying \cref{assumpt:morse-bott}  and let $(x_0,y_0)$ be in $\mathcal{M}$ the augmented critical set defined in \cref{def:parameteric_morse_bott}. Then, there exists an open neighborhood $\mathcal{U}$ of $(x_0,y_0)$ and a positive number $\mu>0$ such that $y\mapsto g(x,y)$ is constant on the set $\mathcal{M}\cap \mathcal{U}$ with some common value $G(x):= g(x,y)$ and the following  holds:
		\begin{align}
			\mu\verts{ g(x,y)-G(x) }\leq \frac{1}{2}\Verts{\partial_y g(x,y)}^2,\qquad \forall (x,y)\in \mathcal{U}.
		\end{align}
\end{prop}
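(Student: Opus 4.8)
The plan is to deduce the locally uniform {\L}ojasiewicz inequality from the normal-form decomposition of \cref{thm:morse-bott_lemma}. Fix an augmented critical point $(x_0,y_0)\in\mathcal{M}$ and let $\psi:\mathcal{U}\to\mathcal{V}$, $\mathcal{K}$, $\mathcal{K}^\perp$, $J_0$ be as produced by that theorem, so that $g(\psi(x,r,w)) = g(\psi(x,0,0)) + \tfrac12 w^\top J_0 w$ on $\mathcal{U}$. Writing $G(x) := g(\psi(x,0,0))$, the first claim --- that $g(x,\cdot)$ is constant equal to $G(x)$ on $\mathcal{M}\cap\mathcal{V}$ --- follows because, in the new coordinates, $\mathcal{M}$ corresponds exactly to the slice $\{w=0\}$: indeed $\partial_y g = 0$ at $\psi(x,r,w)$ forces $J_0 w = 0$ (after accounting for the change of variables), and since $J_0$ is invertible on $\mathcal{K}^\perp$ this gives $w=0$. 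Then $g(x,y) - G(x) = \tfrac12 w^\top J_0 w$ whenever $(x,y)=\psi(x,r,w)$.

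Next I would estimate both sides of the desired inequality in the $(x,r,w)$ coordinates. For the left side, $|g(x,y) - G(x)| = \tfrac12 |w^\top J_0 w| \leq \tfrac12 \|w\|^2$ since $J_0$ has entries $\pm 1$. For the right side, I need a lower bound $\|\partial_y g(x,y)\|^2 \gtrsim \|w\|^2$. This is where the chain rule through $\psi$ comes in: differentiating $g(\psi(x,r,w)) = G(x) + \tfrac12 w^\top J_0 w$ with respect to $(r,w)$ and using that $\psi$ preserves the first coordinate, one gets that $\partial_y g$ at $y = \psi_y(x,r,w)$ equals $(D\psi_y)^{-\top}$ applied to $(0, J_0 w)$, i.e. the $w$-gradient $J_0 w$ pulled back by the (invertible) Jacobian block of $\psi$ in the $y$-direction. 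Because $\psi$ is a $C^1$ diffeomorphism on a neighborhood of the compact-closure point $(x_0,0,0)$, the operator norms of this Jacobian block and its inverse are bounded above and below by positive constants on a possibly smaller neighborhood; hence $\|\partial_y g(x,y)\| \geq c\,\|J_0 w\| = c\,\|w\|$ for some $c>0$. Combining, $\|\partial_y g(x,y)\|^2 \geq c^2 \|w\|^2 \geq 2c^2\,|g(x,y)-G(x)|$, so the claim holds with $\mu = c^2$ on the neighborhood $\mathcal{U}' := \psi(\mathcal{U}'')$ for a suitably shrunk $\mathcal{U}''$.

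The main technical care --- and the step I expect to be the real obstacle --- is handling the change of variables cleanly: $\psi$ mixes $(r,w)$ into $y$, so "$\partial_y g$" must be computed as the gradient in the original $y$ coordinates, not the $(r,w)$ coordinates, and one must verify that the pullback of the gradient by $D\psi$ does not degenerate. The key facts that save this are (i) $\psi$ preserves $x$, so no $\partial_x$ terms leak into $\partial_y g$; (ii) $D\psi$ at $(x_0,0,0)$ is invertible, hence by continuity uniformly bi-Lipschitz on a neighborhood; and (iii) the normal form is exactly quadratic in $w$ with no $r$-dependence, so the gradient has no contribution from the $r$ (kernel) directions. One should also make sure the neighborhood on which the constants $c$ and $\mu$ are valid is genuinely a neighborhood of $(x_0,y_0)$ in $\mathcal{U}$ after the coordinate change, which is immediate since $\psi$ is a homeomorphism onto $\mathcal{V}$. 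A minor point worth stating explicitly: the constancy of $g(x,\cdot)$ on $\mathcal{M}\cap\mathcal{U}'$ uses that $\mathcal{M}\cap\mathcal{V}$ is connected (part of \cref{def:parameteric_morse_bott}), so "$w=0$" really does single out all of the local critical set and the value $G(x)$ is well defined.
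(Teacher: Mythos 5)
Your proposal is correct, and it reaches the conclusion by a somewhat more direct route than the paper. Both arguments ultimately rest on the parametric Morse--Bott lemma (\cref{thm:morse-bott_lemma}), but you use it head-on: you pass to the normal-form coordinates $\psi(x,r,w)$, identify $\mathcal{M}$ locally with the slice $\{w=0\}$ (which indeed follows from invertibility of the $(r,w)\mapsto y$ Jacobian block together with invertibility of $J_0$ on $\mathcal{K}^\perp$, no connectedness needed for that step), and then prove the inequality by the explicit two-sided estimate $|g-G(x)|\le \tfrac12\|w\|^2$ and $\|\partial_y g\|\ge c\|w\|$, yielding an explicit $\mu=c^2$. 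The paper instead first invokes the known {\L}ojasiewicz inequality for the \emph{fixed-parameter} Morse--Bott function $g(x_0,\cdot)$ (via \cref{prop:pointwise_morse_bott} and a cited result of Feehan), and then transfers it uniformly in $x$ using \cref{cor:existence_diffeomorphism}, which writes $g(x,y)=g(x_0,\tau(x,y))+C(x)$ with a family of uniformly bi-Lipschitz diffeomorphisms $\tau_x$; the constant becomes $\mu=\ell^2\mu'$ with $\ell$ the lower Lipschitz bound of $\partial_y\tau$. Your version is more self-contained (it re-derives the pointwise inequality from the quadratic normal form rather than citing it) at the cost of carrying the gradient pullback through $D\psi$ explicitly, which you handle correctly by noting that $\psi$ preserves $x$ so the chain rule produces no $\partial_x g$ contamination and the relevant Jacobian block is uniformly invertible on a shrunken neighborhood. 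The degenerate case $\partial^2_{yy}g(x_0,y_0)=0$ (no $w$ variable) is also handled gracefully by your argument, since then $g(x,\cdot)\equiv G(x)$ locally and the inequality is trivial.
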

\cref{prop:uniform_KL}, which is proven in \cref{sec:proof_morse-bott_lemma}, ensures that the {\L}ojasiewicz gradient inequality holds uniformly on $(x,y)$ near any augmented critical point $(x_0,y_0)$. This result  will be essential in \cref{sec:smoothness_selections} for defining a selection $\phi$ obtained as limits of gradient flows and to obtain a locally uniform control of these flows in the parameter $x$. This in turn  will allow us to obtain the differentiability of the selection in the parameter $x$ whenever $\phi(x,y)$ is a local minimum.

\subsection{Smoothness of Selections Based on Gradient Flows of a Parametric Morse-Bott Function}\label{sec:smoothness_selections}
We consider a construction for the selection $\phi$ in \cref{def:selection} as a limit of a continuous-time gradient flow of $g$. More precisely, we define a continuous-time trajectory $(\phi_t(x,y))_{t\geq 0}$ in $\Y$ initialized at $\phi_0(x,y)=y$ and driven by the differential equation:
\begin{align}\tag{GF}\label{eq:gradient_flow}
	\frac{d \phi_t(x,y)}{dt} = -\partial_y g(x,\phi_t(x,y)).
\end{align}
Provided $\phi_t(x,y)$  converges towards some element  $\phi(x,y)$ as $t{\rightarrow}{+}\infty$, we can expect such a limit to  satisfy both conditions of \cref{def:selection}, therefore constituting a valid selection.
However, for general non-convex functions, $\phi_t(x,y)$ might not always converge  \cite{Lojasiewicz:1982}. 
To guarantee the existence and convergence of the flow, we make the following assumptions on the function $g$.   
\begin{assump}[\bfseries Smoothness]\label{assumpt:smootness} There exists $L{>}0$ such that $y{\mapsto}\partial_y g(x,y)$ is $L$-Lipschitz for any $x{\in} \X$.
\end{assump}
\begin{assump}[\bfseries Coercivity]\label{assumpt:Coercivity}
For any $x\in \X$, it holds that $g(x,y)\rightarrow +\infty$ as $\Verts{ y}\rightarrow +\infty$. 
\end{assump} 
The smoothness assumption in \cref{assumpt:smootness} is standard and guarantees the existence of the flow by the Cauchy-Lipschitz theorem. 
The coercivity condition in \cref{assumpt:Coercivity} guarantees that $\phi_t(x,y)$ cannot escape to infinity. It can be easily enforced by adding a small $\ell_2$-penalty to a non-negative loss (such as cross-entropy or mean-squared loss) which is already a common practice in machine learning. 
These assumptions, along with \cref{assumpt:morse-bott}  
ensure that the limit $\phi(x,y)$ always exists as we summarize in the following proposition, which is proven in \cref{sec:asymptotic_properties_flow}.
\begin{prop}\label{prop:well_defined_limit}
	Under \cref{assumpt:smootness,assumpt:morse-bott,assumpt:Coercivity}, and for any $(x,y)\in \X\times \Y$,  the gradient flow \cref{eq:gradient_flow} always converges towards a critical point $\phi(x,y)$ of $y\mapsto g(x,y)$ and the map $(x,y)\mapsto \phi(x,y)$ is a selection map as defined in \cref{def:selection}.  We call  $\phi$ the \emph{flow selection} relatively to $g$.   
\end{prop}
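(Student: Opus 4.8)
The plan is to establish \cref{prop:well_defined_limit} in three stages: first, global existence and boundedness of the flow; second, convergence of the trajectory to a single critical point; third, verification that the resulting map $\phi$ satisfies the two properties of \cref{def:selection}.

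\textbf{Step 1: Global existence and boundedness.} By \cref{assumpt:smootness}, the vector field $y\mapsto-\partial_y g(x,y)$ is Lipschitz, so the Cauchy--Lipschitz (Picard--Lindel\"of) theorem gives a unique maximal solution $(\phi_t(x,y))_{t\in[0,T_{\max})}$ to \cref{eq:gradient_flow}. Along the flow, $\frac{d}{dt}g(x,\phi_t(x,y)) = -\Verts{\partial_y g(x,\phi_t(x,y))}^2 \le 0$, so $t\mapsto g(x,\phi_t(x,y))$ is non-increasing, hence $g(x,\phi_t(x,y))\le g(x,y)$ for all $t$. By the coercivity assumption \cref{assumpt:Coercivity}, the sublevel set $\{z : g(x,z)\le g(x,y)\}$ is bounded (and closed by continuity), so the trajectory stays in a fixed compact set $\mathcal{C}$; standard ODE escape-time arguments then force $T_{\max}=+\infty$. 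The flow is therefore globally defined and precompact.

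\textbf{Step 2: Convergence to a single critical point.} This is the main obstacle, and it is where the {\L}ojasiewicz machinery enters. Precompactness alone only gives a nonempty $\omega$-limit set $\omega(x,y)$ on which $g(x,\cdot)$ equals a constant $c$ (by the monotonicity in Step 1 and continuity), and every point of $\omega(x,y)$ is critical (since $\dot g\to 0$). To upgrade this to convergence of $\phi_t(x,y)$ itself, I would cover $\omega(x,y)$ by finitely many neighborhoods of the form given by \cref{prop:uniform_KL} — each augmented critical point $(x,y^{*})$ with $y^{*}\in\omega(x,y)$ admits a neighborhood $\mathcal{U}$ and $\mu>0$ with $\mu|g(x,z)-G(x)|\le\frac12\Verts{\partial_y g(x,z)}^2$, and on $\omega(x,y)$ the value $G(x)$ coincides with $c$. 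Patching these yields a single open set $\mathcal{O}\supseteq\omega(x,y)$ and a uniform $\mu>0$ on which the {\L}ojasiewicz gradient inequality $\mu|g(x,z)-c| \le \frac12\Verts{\partial_y g(x,z)}^2$ holds. Since the trajectory eventually enters and (I must check) remains in $\mathcal{O}$, the classical {\L}ojasiewicz argument applies: setting $E(t):=g(x,\phi_t(x,y))-c\ge 0$, one has $\frac{d}{dt}\sqrt{E(t)} = \frac{\dot E(t)}{2\sqrt{E(t)}} = -\frac{\Verts{\partial_y g}^2}{2\sqrt{E(t)}} \le -\sqrt{\mu}\,\Verts{\partial_y g(x,\phi_t(x,y))}$, so $\int_0^{\infty}\Verts{\dot\phi_t(x,y)}\,dt = \int_0^{\infty}\Verts{\partial_y g(x,\phi_t(x,y))}\,dt < \infty$. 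Thus $t\mapsto\phi_t(x,y)$ has finite length and is Cauchy, hence converges to a limit $\phi(x,y)$, which is a critical point by continuity of $\partial_y g$. The delicate point is the ``trapping'' argument — ensuring that once the flow gets $\varepsilon$-close to $\omega(x,y)$ it cannot leave $\mathcal{O}$ before the length estimate kicks in; this is handled in the usual way by choosing the entry time late enough that the remaining length budget $2\sqrt{E(t)/\mu}$ is smaller than the distance from the entry point to $\partial\mathcal{O}$.

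\textbf{Step 3: $\phi$ is a selection.} \emph{Criticality} is immediate from Step 2: $\partial_y g(x,\phi(x,y))=\lim_t \partial_y g(x,\phi_t(x,y))=0$. For \emph{self-consistency}, if $\partial_y g(x,y)=0$ then $y$ is an equilibrium of \cref{eq:gradient_flow}, so by uniqueness of solutions (again \cref{assumpt:smootness}) the constant trajectory $\phi_t(x,y)\equiv y$ is the flow, giving $\phi(x,y)=y$. Hence $\phi$ satisfies \cref{def:selection}, completing the proof. (I note that \cref{prop:well_defined_limit} as stated only asserts existence of the limit and the selection properties, not yet continuity or differentiability in $x$, which are deferred to the smoothness results of this subsection.)
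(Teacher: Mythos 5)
Your proof is correct and follows essentially the same route as the paper: global existence and boundedness from \cref{assumpt:smootness} and \cref{assumpt:Coercivity}, criticality of accumulation points from the monotone decrease of $g$ along the flow, and uniqueness of the limit via the {\L}ojasiewicz inequality supplied by \cref{prop:uniform_KL}. The only difference is that you inline the classical finite-length/trapping argument (with a harmless constant $\sqrt{\mu}$ in place of $\sqrt{\mu/2}$, and the degenerate case $E(t_0)=0$ handled by noting the trajectory is then stationary), whereas the paper delegates exactly this step to a cited convergence theorem of Merlet--Nguyen; your Step 3 verification of the two selection properties is slightly more explicit than the paper's.
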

\cref{prop:well_defined_limit} is a consequence of a general result that holds for functions satisfying a {\L}ojasiewicz gradient inequality \citep{Attouch:2013,Merlet:2013} which is the case here by \cref{prop:uniform_KL}. From now on, we restrict our attention to the selection $\phi$ defined in \cref{prop:well_defined_limit}. 
Even though $\phi$ satisfies the implicit equation $\partial_y g(x,\phi(x,y))=0$, we cannot rely anymore on the implicit function theorem for studying the differentiability of $\phi(x,y)$ in $x$ since $g$ can have degenerate critical points. Instead, we propose to characterize the differentiability of $\phi$ by studying the limit of  $U_t(x,y):= \partial_x \phi_t(x,y)$ which is formally driven by a linear differential equation of the form:
\begin{align}\label{eq:dynamics_U}
   -\frac{dU_t(x,y)}{dt} = \partial^2_{xy} g(x,\phi_t(x,y)) + U_t(x,y)\partial^2_{yy} g(x,\phi_t(x,y)). 
\end{align}
Had we known in advance that $\phi(x,y)$ is differentiable in $x$, the limit $U_{\infty}(x,y)$ of $U_t(x,y)$ as $t\rightarrow +\infty$, whenever defined, would be a promising candidate for the differential of $\phi(x,y)$ in $x$. Such a limit is indeed expected to satisfy the following linear equation:
\begin{align}\label{eq:limit_U}
   0= \partial^2_{xy} g(x,\phi(x,y)) + U_{\infty}(x,y)\partial^2_{y y} g(x,\phi(x,y)).
\end{align}
A first challenge is to ensure that $U_t$ does not diverge. For critical points $\phi(x,y)$ that are not local minima, it is easy to see that the Hessian $\partial^2_{yy} g(x,\phi_t(x,y))$ must have a negative eigenvalue for $t$ large enough, therefore causing the system \cref{eq:dynamics_U} to diverge. 
Intuitively, unless $\phi(x,y)$ is a local minimum, there is no reason to expect $\phi(x,y)$ to be differentiable or even continuous in $x$, simply  because $\phi(x,y)$ would be an unstable fixed-point of the flow $\phi_t(x,y)$, so that any change in $x$ might cause a large variation in $\phi(x,y)$. The possible non-differentiability of $\phi(x,y)$ for critical points that are not local minima is not problematic in practice, since for almost all initial conditions $y$ of the flow $\phi_t(x,y)$, the limit $\phi(x,y)$ is guaranteed to be a local minimizer~\citep{Panageas:2016}. In addition, we show in \cref{prop:stability_of_local_minima} of \cref{sec:continuity_flow_selection} that if $\phi(x_0,y)$ is a local minimum, then $\phi(x,y)$ must also be a local minimum in a neighborhood of $x_0$. 

Nevertheless, even for local minima, if the Hessian $\partial^2_{yy} g(x,\phi(x,y))$ is non-invertible, \cref{eq:limit_U} might never hold if $\partial^2_{xy} g(x,\phi(x,y))$ does not belong to the image of the Hessian. However, we show in \cref{prop:exact_least_square_solutions}  of \cref{sec:properties_parameteric_morse_bott} that, for any pair $(x,y)$ of critical points, 
$\partial^2_{xy} g(x,y)$ must always belong to the span of the Hessian $\partial^2_{yy} g(x,y)$ as soon as $g$ satisfies \cref{assumpt:morse-bott}, therefore ensuring that \cref{eq:limit_U} admits a solution. The following theorem, which is proven in \cref{sec:differentiability-selection}, establishes the differentiability of $\phi$ at local minima and shows that $\partial_x \phi$ is exactly given by the limit $U_{\infty}$.
\begin{thm}[\bfseries Differentiability of the flow selection]\label{prop:diff_flow_selection}
	Let $g$ be a function satisfying \cref{assumpt:Coercivity,assumpt:morse-bott,assumpt:smootness} so that the flow selection $\phi$ is well-defined.  Let $(x_0,y_0)$ be in $\X{\times}\Y$.	 If $\phi(x_0,y_0)$ is a local minimizer of $y\mapsto g(x_0,y)$, then there exists a neighborhood $\mathcal{U}$ of $x_0$ on which $x\mapsto \phi(x,y_0)$ is differentiable with differential $\partial_x \phi(x,y_0) {=} U_{\infty}(x,y_0)$. Moreover, if $y_0$ is a local minimizer of $y{\mapsto} g(x_0,y)$, then, denoting by $\dagger$ the pseudo inverse operator, $\partial_x\phi(x_0,y_0)$ is exactly given by:
\begin{align}\label{eq:gradient_phi}
   \partial_x\phi(x_0,y_0) {=} - \partial_{xy} g(x_0,y_0)\parens{\partial_{yy} g(x_0,y_0)}^{\dagger}.
\end{align}
\end{thm}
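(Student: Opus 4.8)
The plan is to reduce everything to a local model via the parametric Morse-Bott lemma (\cref{thm:morse-bott_lemma}), where the gradient flow becomes explicitly integrable, and then transport the conclusion back through the diffeomorphism $\psi$. First I would fix the augmented critical point $(x_0,y_*)$ with $y_* := \phi(x_0,y_0)$, which is a local minimizer by hypothesis, and apply \cref{thm:morse-bott_lemma} at $(x_0,y_*)$. Since $y_*$ is a local minimum, the Hessian $A_0 = \partial^2_{yy}g(x_0,y_*)$ is positive semidefinite, so the sign matrix $J_0$ is the identity on $\mathcal K^\perp$ and the local model reads $g(\psi(x,r,w)) = G(x) + \tfrac12\|w\|^2$. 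The key qualitative input is \cref{prop:stability_of_local_minima} (referenced in the text): for $x$ near $x_0$, $\phi(x,y_0)$ stays a local minimizer and, by \cref{prop:uniform_KL} together with \cref{prop:well_defined_limit}, the flow $\phi_t(x,y_0)$ stays in the neighborhood $\mathcal V$ on which the chart is valid for all $t$ large — this is where the \emph{locally uniform} {\L}ojasiewicz inequality does the real work, giving an exponential-type convergence rate uniform in $x$ near $x_0$ so that the whole trajectory, past some time, is trapped in $\mathcal V$.

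Next I would pull the flow back through $\psi$. Writing $(r_t(x),w_t(x))$ for the $\mathcal K\times\mathcal K^\perp$-coordinates of $\phi_t(x,y_0)$ (once it has entered $\mathcal V$), the gradient flow of $g$ transported by the diffeomorphism becomes the gradient flow of the model function $G(x) + \tfrac12\|w\|^2$ with respect to the pulled-back Riemannian metric $\psi^*(\mathrm{eucl})$. The crucial structural fact is that this model function does not depend on $r$ at all; hence along the flow the $r$-component is driven only by off-diagonal metric terms, while the $w$-component is attracted to $0$. Using that $0$ is a (uniformly) attracting fixed point of the $w$-dynamics and that the metric depends smoothly (indeed $C^1$, using the extra derivative from \cref{assumpt:morse-bott}) on $(x,r,w)$, one shows $w_t(x)\to 0$ and $r_t(x)\to r_\infty(x)$ with $r_\infty$ a $C^1$ function of $x$; consequently $\phi(x,y_0) = \psi(x,r_\infty(x),0)$ is $C^1$ in $x$ on a neighborhood $\mathcal U$ of $x_0$. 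This simultaneously proves differentiability and, by differentiating the identity $\partial_y g(x,\phi(x,y_0)) = 0$ (valid by the criticality property in \cref{def:selection}), gives
\begin{align}
\partial^2_{xy} g(x,\phi(x,y_0)) + \partial_x\phi(x,y_0)\,\partial^2_{yy} g(x,\phi(x,y_0)) = 0,
\end{align}
i.e. $\partial_x\phi(x,y_0)$ solves \cref{eq:limit_U}; matching this with the dynamics \cref{eq:dynamics_U} whose limit is $U_\infty$ identifies $\partial_x\phi(x,y_0) = U_\infty(x,y_0)$.

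It remains to derive the closed form \cref{eq:gradient_phi} in the case $y_0$ itself is a local minimizer, so that $\phi(x_0,y_0) = y_0$ by self-consistency. Then \cref{eq:limit_U} at $(x_0,y_0)$ reads $\partial^2_{xy}g(x_0,y_0) + \partial_x\phi(x_0,y_0)\,\partial^2_{yy}g(x_0,y_0) = 0$. By \cref{prop:exact_least_square_solutions}, $\partial^2_{xy}g(x_0,y_0)$ lies in the span of the Hessian, so the equation is consistent; among its solutions, $-\partial^2_{xy}g(x_0,y_0)\,(\partial^2_{yy}g(x_0,y_0))^\dagger$ is the one with rows in $\range(\partial^2_{yy}g(x_0,y_0)) = \mathcal K^\perp$. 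To single it out I would argue that $U_\infty(x_0,y_0)$ has no component along $\mathcal K$: indeed, since the model function $\tfrac12 w^\top J_0 w$ is independent of $r$ (the $\mathcal K$-directions), the linearized flow \cref{eq:dynamics_U} started from $U_0 = \partial_x y$ at $t=0$ has its $\mathcal K$-valued part constant, and the uniform trapping argument above forces that part to equal the $\mathcal K$-part of $\partial_x\phi$ which is absorbed into $r_\infty$; at the base point $(x_0,y_0)$, where the trajectory is the constant $y_0$, this component is in fact determined to be zero by self-consistency (the flow from a critical point is stationary, so $\phi_t(x_0,y_0) \equiv y_0$ and $U_t(x_0,y_0)$ is the solution of a linear ODE whose component on $\mathcal K$ — ker of the Hessian — we must still pin down). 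The cleanest route for this last point: observe that the map $x\mapsto \phi(x,y_0)$ lands in $\mathcal M$, whose tangent space at $(x_0,y_0)$ is $\X\times\mathcal K$ in the chart, while \emph{simultaneously} $\phi(x,y_0)$ must be the limit of a flow converging along $\mathcal K^\perp$; combining these with the explicit Morse-Bott coordinates forces the $\mathcal K$-component of the derivative to match $r_\infty'$, and a direct computation in coordinates (the $r$-equation has zero right-hand side to first order because the metric perturbation vanishes at the critical point to the needed order) shows $r_\infty'(x_0) = 0$, hence $\partial_x\phi(x_0,y_0)$ is exactly the pseudo-inverse solution.

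The main obstacle I expect is precisely controlling the $r$-dynamics (the motion along the degenerate directions $\mathcal K$): the model function is flat there, so the flow's behavior in $\mathcal K$ is governed entirely by higher-order terms in the pulled-back metric, and one must show these produce a well-defined, $C^1$-in-$x$ limit $r_\infty(x)$ rather than drift — this is where the third-order differentiability in \cref{assumpt:morse-bott} and the uniform {\L}ojasiewicz estimate of \cref{prop:uniform_KL} are essential, and where a naive application of the implicit function theorem fails. Making the "uniform in $x$" trapping-and-convergence estimate rigorous (so that differentiation under the limit $t\to\infty$ is justified) is the technical heart of the argument.
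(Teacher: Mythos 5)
Your plan takes a genuinely different route from the paper's: you propose to work inside the normal coordinates of \cref{thm:morse-bott_lemma} and analyze the pulled-back $(r,w)$-dynamics directly, whereas the paper never returns to the chart for this theorem. Instead, it proves in \cref{prop:convergence_U} that $U_t(x,y_0)=\partial_x\phi_t(x,y_0)$ converges \emph{uniformly in $x$ near $x_0$}, at an exponential rate, to a limit $U_{\infty}$, via a resolvent estimate for the linear system \cref{eq:evolution_U} split along $\range(A_{\infty})$ and $\ker(A_{\infty})$; combined with continuity of $\phi$ at $x_0$ (\cref{prop:continuity_at_point}) and the classical theorem on differentiating a uniformly convergent sequence of derivatives (Rudin, Theorem 7.17), this yields both the differentiability and the identity $\partial_x\phi=U_{\infty}$ in one stroke. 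Your route could in principle be made to work, but the step you yourself defer as ``the technical heart'' --- showing that the degenerate component $r_{\infty}(x)$ of the limit is $C^1$ in $x$, i.e.\ justifying differentiation under the limit $t\to\infty$ --- is not a detail to be filled in: it \emph{is} the theorem, and nothing in your sketch substitutes for the uniform-in-$x$ convergence of the derivative flow that the paper supplies. Asserting that the $w$-dynamics is uniformly attracting and the metric is smooth does not, by itself, produce a $C^1$ limit along the flat $\mathcal{K}$-directions.

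There is also a concrete logical error in your identification of $\partial_x\phi$ with $U_{\infty}$: you argue that both satisfy the linear relation \cref{eq:limit_U} and conclude they coincide. That relation only determines a matrix up to an arbitrary component in $\ker\bigl(\partial^2_{yy}g\bigr)$ --- the paper explicitly notes after the theorem that $U_{\infty}$ ``is not fully characterized by \cref{eq:limit_U} as it might contain a non-zero component in the null-space of the Hessian'' --- so matching the equations identifies nothing; the equality only follows from the uniform convergence of $U_t$. Relatedly, your justification of the closed form \cref{eq:gradient_phi} via ``$r_{\infty}'(x_0)=0$ in the chart'' is unproven and chart-dependent. The clean argument, implicit in \cref{prop:convergence_U}, is that when $y_0$ is itself a critical point the trajectory is stationary, so $A_t\equiv A_{\infty}$ and $B_t\equiv B_{\infty}$, the source term in the evolution of $(I-A_{\infty}A_{\infty}^{\dagger})(U_t-U^{\star})$ vanishes identically, and since $U_0=\partial_x y_0=0$ this kernel projection stays zero for all $t$; hence $U_{\infty}=U^{\star}$, which is exactly the pseudo-inverse solution in \cref{eq:gradient_phi}.
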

The expression in \cref{eq:gradient_phi} is very similar to the one that would arise by application of the implicit function theorem to a strongly convex function $g$. However, the proof technique does not rely on such a theorem which would not be applicable here. The key technical challenges in proving the above result are: (i) showing that $\phi(x,y)$ must be continuous at~$x_0$ and (ii) controlling the error $\Verts{U_t(x,y)-U_{\infty}(x,y)}$ locally uniformly in $x$. The result follows by the application of classical uniform convergence results \cite[Theorem 7.17]{Rudin:1976}. The continuity of~$\phi$ is established in \cref{prop:continuity_at_point} of \cref{sec:continuity_flow_selection} and relies on a stability analysis of the flow $\phi_t$ performed in \cref{sec:stability_gradient_flow}. The uniform convergence of $U_t$ towards $U_{\infty}$ is shown in \cref{prop:convergence_U} of \cref{sec:differentiability-selection}  and relies on a local uniform convergence of the flow $\phi_t$ towards $\phi$ which is proven in \cref{prop:unif_convergence_flow} of \cref{sec:unif_convergence_flow}. 
It is worth noting that, even though we identified $\partial_x \phi$ to be~$U_{\infty}$, the latter is not fully characterized by \cref{eq:limit_U} as it might contain a non-zero component in the null-space of the Hessian. However, when $(x_0,y_0)$ is an augmented critical pair of $g$, such a component vanishes, and $\partial_x\phi(x_0,y_0)$ is exactly determined by the minimal norm solution in \cref{eq:gradient_phi}. The latter fact has practical implications when designing algorithms for solving \cref{eq:ABG} as we discuss next. 

\section{Algorithms}\label{sec:algorithms}

\subsection{Unrolled Optimization for BGS}
Unrolled optimization constructs a map $\varphi_T(x,y)$ approximating a critical point of the function $y\mapsto g(x,y)$ for any fixed $x$ by applying a finite number $T>0$ of gradient updates starting from some initial condition $y$. By convention, we set $\varphi_0(x,y) {=} y$. Hence, $\varphi_T$ can be understood as an approximation to the selection map defined in \cref{sec:smoothness_selections}. We emphasize that $\varphi_T$ is not a selection (\cref{def:selection}) since $\varphi_T(x,y)$ is not a critical point of $g$ in general. Nevertheless, it provides a tractable approximation to critical points which is key for constructing practical algorithms for bilevel optimization. The gradient of $\varphi_T(x,y)$ w.r.t. $x$ is then obtained by differentiating through the optimization steps and used to optimize the approximate upper-level objective:
\begin{align}
	\mathcal{L}_T(x,y) := f(x,\varphi_T(x,y)). 
\end{align}
Given the $k$-th upper-level iterate $x_k$ and an initial condition  $\tilde{y}_{k}$ for the unrolled optimization, these approaches compute an approximation $y_k {=} \varphi_T(\xmone,\tilde{y}_k)$ and find an update direction $d_k$ for the upper-level variable $x$ by differentiating $\mathcal{L}_T(x,\tilde{y}_k)$ in $x$ at the current iterate $\xmone$. The following iterate $x_{k}$ is obtained by applying an update procedure, such as $x_{k} {=} \xmone {-} \gamma d_k$ for positive small enough step-size $\gamma$. In \cref{alg:abg_alg}, we present several variants of these schemes, including a simple correction allowing them to solve~\cref{eq:ABG} instead of an approximation.

\begin{minipage}{0.38\linewidth}
\vspace{.3cm}
The initial condition $\tilde{y}_k$ is often computed using a  warm-start procedure $\tilde{y}_{k} {=} \mathcal{I}_M(\xmone,\ymone)$. The simplest procedure is to set $\tilde{y}_{k} {=} \ymone$ in which case $\mathcal{I}_0(x,y){=}y$. However, it is not uncommon to perform $M{>}0$ optimization steps to minimize the objective $y{\mapsto} g(\xmone,y)$ starting from $\ymone$. By doing so, gradient unrolling stops at $\tilde{y}_k$ and ignores the dependence of $\tilde{y}_k$ on $\ymone$, resulting in Truncated unrolled optimization \citep{Shaban:2019}. \cref{alg:abg_alg} summarizes these approaches when the binary variable  {\bf{AddCorrection}} is set to {\bf False}. 
To characterize the limit points of \cref{alg:abg_alg}, we make the following  assumptions on  $\mathcal{I}_M$, $\mathcal{\varphi}_T$.  
\end{minipage}
\hspace{0.2cm}
\begin{minipage}{0.55\linewidth}
	\begin{algorithm}[H]
\caption{BGS-Opt$(x_0,y_0)$}\label{alg:abg_alg}
	\begin{algorithmic}[1]
		\STATE Inputs: $x_0$, $y_0$, 
		\STATE Parameters:  $K$, $T$, $M$ , $\gamma$ {\bf {AddCorrection}} 		
		\FOR{ $k \in \{1,...,K+1\}$ }
                        \STATE $\tilde{y}_k \leftarrow  \mathcal{I}_M\parens{\xmone,\ymone}$. {\small\textcolor{Blue}{\# Warm-start.}}
                        \STATE $y_{k} {\leftarrow} \varphi_T\parens{\xmone, \tilde{y}_k}$ {\small\textcolor{Blue}{\# Unrolled optimization.}}
                        \STATE $d_k \leftarrow \partial_x \mathcal{L}_{T}\parens{\xmone, \tilde{y}_k}$
			\IF{{\bf {AddCorrection}$=$ True} }
			\STATE $v_k\leftarrow  \partial_y \mathcal{L}_{T}\parens{\xmone, \tilde{y}_k}$
			\STATE $\xi_k{\approx} -\parens{\partial_{yy}g(\xmone,y_k)}^{\dagger}v_k$ {\small\textcolor{Blue}{\# Approx. solver}}
			\STATE $d_k\leftarrow d_k + \textcolor{black}{\partial_{xy}g(\xmone,y_k)\xi_k}$ {\small\textcolor{Blue}{\# Grad. correction}}
			\ENDIF
			\STATE $x_{k} \leftarrow  \xmone -\gamma  d_{k}$ {\small\textcolor{Blue}{\# Updating $x$}} 
		\ENDFOR
		\STATE Return $(x_{K}, y_K)$.
	\end{algorithmic}
\end{algorithm}
\end{minipage}

\begin{assump}\label{assumpt:map_varphi}
For any non-negative integers $M,T\geq 0$, the maps $\mathcal{I}_M$  and $\mathcal{\varphi}_T$ are continuous on $\X\times \Y$ and take values in $\Y$, with $\varphi_T$ being continuously differentiable. 
		 Moreover, for any $(x,y)\in \X\times \Y$ s.t. $\partial_y g(x,y){=}0$ and $M,T\geq 0$, there exists a matrix $D$ such that:
	\begin{align}
           \mathcal{I}_M(x,y)=\varphi_T(x,y) = y,\quad  \partial_x \varphi_T(x,y) = \partial^2_{xy} g(x,y)D,\quad 
                \partial_y \varphi_T(x,y) = I+\partial^2_{yy} g(x,y)D. 
	\end{align}
	Finally, for any $(x,y){\in} \X{\times} \Y$, and $M,T\geq 0$ s.t. $T+M>0$, the equality $y {=} \varphi_T(x,\mathcal{I}_M(x,y))$ implies that $y$ is a critical point of $g$, i.e. $\partial_y g(x,y)=0$.
\end{assump}
\begin{assump}\label{assumpt:map_convergence_to_selection}
$\varphi_T$ converges to a selection $\phi$ 
 and $\partial_x\varphi_T$ converges uniformly near local minima.
\end{assump}
	\cref{assumpt:map_varphi} is satisfied by many mappings used in practice such as $T$-steps of the gradient descent or proximal point algorithms,  whenever $g$ is twice-continuousely differentiable and $L$-smooth as shown in \cref{prop:properties_varphi} of \cref{sec:proof_algorithms}. \cref{assumpt:map_convergence_to_selection} is a discrete-time version of the uniform convergence result in \cref{prop:convergence_U} of \cref{sec:differentiability-selection} but that we directly assume here for simplicity.  
	Under these assumptions we show that \cref{alg:abg_alg} can find equilibria of \cref{eq:ABG} up to an approximation error resulting from the fact that $\varphi_T$ is not an exact selection.
\begin{prop}\label{prop:critical_point_no_correction}
Let $M,T$ be non-negative numbers s.t. $M+T>0$ and let  $(x_k,y_k)$ be the iterates of \cref{alg:abg_alg} using the maps $\mathcal{I}_M$ and $\varphi_T$ and without any correction, i.e.  {\bf {AddCorrection}${=}$False}. If $(x_k,y_k)$ converges to a limit point $(x_T^{\star},y_T^{\star})$ then, under \cref{assumpt:map_varphi}:
	\begin{align}
		\partial_x \mathcal{L}_{T}(x_T^{\star},y_T^{\star})=0,\qquad \partial_y g(x_T^{\star},y_T^{\star})=0.
	\end{align}
Let $E$ be the set of limit points $(x_T^{\star},y_T^{\star})$  for $T\geq 0$. If $E$ is bounded and $y_T^{\star}$ is a local minimum of $g(x_T^{\star},.)$ for any $T\geq 0$, then, under \cref{assumpt:map_varphi,assumpt:map_convergence_to_selection}, the elements of $E$ are approximate equilibria for \cref{eq:ABG}:
\begin{align}
	\lim\sup_{T}\Verts{\partial_x\mathcal{L}_{\phi}(x_T^{\star},y_T^{\star})}=0, \qquad \partial_y g(x_T^{\star},y_T^{\star})=0,\quad (\forall T>0). 
\end{align}
\end{prop}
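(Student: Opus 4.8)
The proposition has two parts, and I would prove them in the stated order: the first-order stationarity of an arbitrary limit point $(x_T^{\star},y_T^{\star})$ of the uncorrected iterates, and then the asymptotic-in-$T$ statement that the set $E$ of all such limit points consists of approximate equilibria of \cref{eq:ABG}. For the first part, suppose $(x_k,y_k)\to(x_T^{\star},y_T^{\star})$. From the update $x_k=\xmone-\gamma d_k$ with $\gamma>0$ one gets $d_k\to 0$. Passing to the limit in $\tilde y_k=\mathcal I_M(\xmone,\ymone)$ and $y_k=\varphi_T(\xmone,\tilde y_k)$ using the continuity of $\mathcal I_M$ and $\varphi_T$ (\cref{assumpt:map_varphi}) yields $y_T^{\star}=\varphi_T\bigl(x_T^{\star},\mathcal I_M(x_T^{\star},y_T^{\star})\bigr)$; since $M+T>0$, the last clause of \cref{assumpt:map_varphi} forces $\partial_y g(x_T^{\star},y_T^{\star})=0$, the second claimed identity. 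Knowing $y_T^{\star}$ is critical, the fixed-point clauses of \cref{assumpt:map_varphi} give $\mathcal I_M(x_T^{\star},y_T^{\star})=y_T^{\star}$, hence $\tilde y_k\to y_T^{\star}$; continuity of $\partial_x\mathcal L_T$ (from $\varphi_T\in C^1$ and continuity of the first derivatives of $f$) together with $d_k=\partial_x\mathcal L_T(\xmone,\tilde y_k)\to 0$ then give $\partial_x\mathcal L_T(x_T^{\star},y_T^{\star})=0$.

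For the second part, the key is to compare $\partial_x\mathcal L_\phi$ and $\partial_x\mathcal L_T$ at $(x_T^{\star},y_T^{\star})$. Since $y_T^{\star}$ is critical for $g(x_T^{\star},\cdot)$, we have $\varphi_T(x_T^{\star},y_T^{\star})=y_T^{\star}$ (\cref{assumpt:map_varphi}) and $\phi(x_T^{\star},y_T^{\star})=y_T^{\star}$ (self-consistency, \cref{def:selection}), and $\phi$ is differentiable there by \cref{prop:diff_flow_selection} because $y_T^{\star}$ is a local minimizer. Expanding both gradients by the chain rule at this point, the $\partial_x f$ contributions cancel, and using $\partial_x\mathcal L_T(x_T^{\star},y_T^{\star})=0$ from the first part one obtains
\[
\partial_x\mathcal L_\phi(x_T^{\star},y_T^{\star})=\bigl(\partial_x\phi(x_T^{\star},y_T^{\star})-\partial_x\varphi_T(x_T^{\star},y_T^{\star})\bigr)\,\partial_y f(x_T^{\star},y_T^{\star}).
\]
As $E$ is bounded, $\sup_{T}\Verts{\partial_y f(x_T^{\star},y_T^{\star})}<\infty$, so it suffices to show $\Verts{\partial_x\phi(x_T^{\star},y_T^{\star})-\partial_x\varphi_T(x_T^{\star},y_T^{\star})}\to 0$ as $T\to\infty$. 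I would argue by contradiction: extract a subsequence $T_j\to\infty$ along which this quantity stays $\geq\delta>0$ and, using boundedness of $E$, along which $(x_{T_j}^{\star},y_{T_j}^{\star})\to(\bar x,\bar y)$.

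It then remains to check that $\bar y$ is a local minimizer of $g(\bar x,\cdot)$, so that \cref{assumpt:map_convergence_to_selection} can be invoked at $(\bar x,\bar y)$. By continuity $\partial_y g(\bar x,\bar y)=0$, and $\partial^2_{yy}g(\bar x,\bar y)$ is positive semidefinite because each $y_{T_j}^{\star}$ is a local minimizer of a twice-differentiable function (second-order necessary condition) and $\partial^2_{yy}g$ is continuous. Applying \cref{thm:morse-bott_lemma} at $(\bar x,\bar y)$, in suitable coordinates $g(\bar x,\cdot)$ coincides with $g(\bar x,\bar y)+\tfrac12 w^{\top}J_0 w$ near $\bar y$, where $J_0$ carries the signs of the nonzero eigenvalues of $\partial^2_{yy}g(\bar x,\bar y)$; positive semidefiniteness forces $J_0=I$, hence $g(\bar x,\cdot)\geq g(\bar x,\bar y)$ locally and $\bar y$ is a local minimizer. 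Now \cref{assumpt:map_convergence_to_selection} furnishes a neighborhood of $(\bar x,\bar y)$ on which $\partial_x\varphi_T\to\partial_x\phi$ uniformly; since $(x_{T_j}^{\star},y_{T_j}^{\star})\to(\bar x,\bar y)$, eventually these points lie in that neighborhood and $\Verts{\partial_x\phi(x_{T_j}^{\star},y_{T_j}^{\star})-\partial_x\varphi_{T_j}(x_{T_j}^{\star},y_{T_j}^{\star})}\to 0$, contradicting $\geq\delta$. This gives $\limsup_T\Verts{\partial_x\mathcal L_\phi(x_T^{\star},y_T^{\star})}=0$, while $\partial_y g(x_T^{\star},y_T^{\star})=0$ is part of the first claim.

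The main obstacle is exactly the step that cluster points $(\bar x,\bar y)$ of the family $\{(x_T^{\star},y_T^{\star})\}_T$ inherit the local-minimum property: a limit of local minimizers need not be one in general, and it is the rigidity of parametric Morse--Bott functions (\cref{thm:morse-bott_lemma}, reached here via positive semidefiniteness of $\partial^2_{yy}g(\bar x,\bar y)$) that makes this true. A secondary point is that one must read \cref{assumpt:map_convergence_to_selection} as providing uniform convergence of $\partial_x\varphi_T$ on a full neighborhood of such a limit point in $\X\times\Y$ (not merely along a fixed slice $y=y_0$), which is what allows transferring the convergence from the base point $(\bar x,\bar y)$ to the moving sequence $(x_{T_j}^{\star},y_{T_j}^{\star})$.
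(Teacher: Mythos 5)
Your proof is correct and follows essentially the same route as the paper's: the identical fixed-point/continuity argument for the first part, and for the second part the same decomposition $\partial_x\mathcal L_\phi(x_T^{\star},y_T^{\star})=(\partial_x\phi-\partial_x\varphi_T)\,\partial_y f(x_T^{\star},y_T^{\star})$ at the limit points combined with uniform convergence of $\partial_x\varphi_T$ near a cluster point of $E$ (your contradiction/subsequence phrasing is logically equivalent to the paper's direct extraction of the subsequence achieving the $\limsup$). The only substantive difference is that you carefully justify, via the Morse--Bott lemma, that the cluster point $(\bar x,\bar y)$ inherits the local-minimum property, a step the paper dispatches with a terse appeal to continuity of the Hessian that would not suffice on its own, so your write-up is, if anything, more complete on that point.
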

\cref{prop:critical_point_no_correction} shows that unrolled optimization algorithms approximately solve \cref{eq:ABG} in the limit where the number of unrolling steps $T$ of the $\varphi_T$ goes to infinity. This result is consistent with the ones obtained in \citep{Ji:2021a} for the case where $g$ is strongly convex and illustrates the high computational cost for solving \cref{eq:ABG} without correcting for the bias introduced by unrolling. Next, we show how to get rid of such a bias in light of \cref{prop:diff_flow_selection}. 
  
\subsection{Implicit Gradient Correction}
We propose to correct the bias of unrolling by exploiting the expression of the gradient $\partial_x\phi$ provided in \cref{prop:diff_flow_selection}.
The key idea is to obtain an expression for $\partial_x\mathcal{L}_{\phi}(x,y)$ in terms of $\mathcal{L}_{T}$ and the second-order derivatives of $g$ which holds for any local minimizer $y$ of $y\mapsto g(x,y)$ as shown by the proposition below.
\begin{prop}\label{prop:gradient_correction}
	Let $\phi$ be the selection defined in \cref{sec:smoothness_selections} and $(x,y)\in \X\times \Y$ be s.t. $y$ is a local minimum of $y\mapsto g(x,y)$. Then, under \cref{assumpt:Coercivity,assumpt:map_varphi,assumpt:morse-bott,assumpt:smootness}, $\partial_x\mathcal{L}_{\phi}(x,y)$ is given by the equation:
\begin{align}\label{eq:expression_grad_L}
   \partial_x\mathcal{L}_{\phi}(x,y) := \partial_x\mathcal{L}_T(x,y) - \partial^2_{xy} g(x,y)\parens{\partial^2_{yy} g(x,y)}^{\dagger}\partial_y \mathcal{L}_T(x,y).
\end{align}
\end{prop}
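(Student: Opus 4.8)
The plan is to evaluate everything at $(x,y)$ using the fact that $y$ is a local minimizer of $g(x,\cdot)$, hence an augmented critical point with $\partial_y g(x,y)=0$. This immediately gives two normalizations that make the chain rule collapse: by \emph{self-consistency} in \cref{def:selection} the flow selection fixes $y$, i.e. $\phi(x,y)=y$; and by the critical-point clause of \cref{assumpt:map_varphi} the unrolled map also fixes $y$, i.e. $\varphi_T(x,y)=y$, together with a matrix $D$ such that $\partial_x\varphi_T(x,y)=\partial^2_{xy}g(x,y)D$ and $\partial_y\varphi_T(x,y)=I+\partial^2_{yy}g(x,y)D$. Since $\phi$ is the flow selection of \cref{sec:smoothness_selections} (which is well defined under the standing assumptions \cref{assumpt:Coercivity,assumpt:morse-bott,assumpt:smootness}) and $\phi(x,y)=y$ is a local minimizer of $g(x,\cdot)$, \cref{prop:diff_flow_selection} applies: $x\mapsto\phi(x,y)$ is differentiable near $x$ with $\partial_x\phi(x,y)=-\partial^2_{xy}g(x,y)\parens{\partial^2_{yy}g(x,y)}^{\dagger}$. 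With these facts the statement becomes a bookkeeping computation.

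Concretely, differentiating $\mathcal{L}_\phi(x,y)=f(x,\phi(x,y))$ and using $\phi(x,y)=y$ and the closed form for $\partial_x\phi$ yields
\[
\partial_x\mathcal{L}_\phi(x,y)=\partial_x f(x,y)-\partial^2_{xy}g(x,y)\parens{\partial^2_{yy}g(x,y)}^{\dagger}\partial_y f(x,y).
\]
On the other side, differentiating $\mathcal{L}_T(x,y)=f(x,\varphi_T(x,y))$ and using $\varphi_T(x,y)=y$ together with the expressions for $\partial_x\varphi_T,\partial_y\varphi_T$ from \cref{assumpt:map_varphi} gives $\partial_x\mathcal{L}_T(x,y)=\partial_x f(x,y)+\partial^2_{xy}g(x,y)D\,\partial_y f(x,y)$ and $\partial_y\mathcal{L}_T(x,y)=\parens{I+\partial^2_{yy}g(x,y)D}\partial_y f(x,y)$. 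Substituting both into the right-hand side of \cref{eq:expression_grad_L} and expanding, the contribution $\partial^2_{xy}g(x,y)D\,\partial_y f(x,y)$ occurs once with a plus sign and once, inside $-\,\partial^2_{xy}g(x,y)\parens{\partial^2_{yy}g(x,y)}^{\dagger}\partial^2_{yy}g(x,y)D\,\partial_y f(x,y)$, with a minus sign; after rewriting the latter via \cref{prop:exact_least_square_solutions} as $-\,\partial^2_{xy}g(x,y)D\,\partial_y f(x,y)$, the two cancel and what remains is exactly $\partial_x f(x,y)-\partial^2_{xy}g(x,y)\parens{\partial^2_{yy}g(x,y)}^{\dagger}\partial_y f(x,y)$, i.e. $\partial_x\mathcal{L}_\phi(x,y)$ from the display above (recalling $\partial_{xy}g=\partial^2_{xy}g$ and $\partial_{yy}g=\partial^2_{yy}g$ are the same object).

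The only nontrivial step — everything else is the chain rule and the two normalizations $\phi(x,y)=\varphi_T(x,y)=y$ — is this cancellation, and it is where \cref{assumpt:morse-bott} does the work. The matrix $D$ is not canonical (it depends on the unrolling scheme and on $T$), so $\partial_x\mathcal{L}_T$ and $\partial_y\mathcal{L}_T$ individually carry a $D$-contribution; what forces these to cancel in the combination $\partial_x\mathcal{L}_T-\partial^2_{xy}g\,(\partial^2_{yy}g)^{\dagger}\partial_y\mathcal{L}_T$ is precisely the identity $\partial^2_{xy}g(x,y)\parens{I-(\partial^2_{yy}g(x,y))^{\dagger}\partial^2_{yy}g(x,y)}=0$, that is, that $\partial^2_{xy}g(x,y)$ is annihilated by the orthogonal projector onto $\ker\partial^2_{yy}g(x,y)$ — equivalently, lies in the range of the Hessian — which is \cref{prop:exact_least_square_solutions} and holds at every augmented critical point under \cref{assumpt:morse-bott}. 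I would therefore write the proof so that this structural fact is invoked explicitly at the cancellation, with all other ingredients pulled directly from \cref{assumpt:map_varphi}, \cref{prop:diff_flow_selection} and \cref{def:selection}.
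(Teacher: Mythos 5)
Your proof is correct and follows essentially the same route as the paper's: both reduce to the chain rule at the fixed point $\varphi_T(x,y)=\phi(x,y)=y$, plug in $\partial_x\varphi_T=\partial^2_{xy}g\,D$, $\partial_y\varphi_T=I+\partial^2_{yy}g\,D$ and $\partial_x\phi=-\partial^2_{xy}g\,(\partial^2_{yy}g)^{\dagger}$ from \cref{prop:diff_flow_selection}, and observe that the $D$-dependent terms cancel because $\partial^2_{xy}g$ lies in the range of $\partial^2_{yy}g$. You are in fact slightly more explicit than the paper at the cancellation step, correctly attributing the identity $\partial^2_{xy}g\,(I-(\partial^2_{yy}g)^{\dagger}\partial^2_{yy}g)=0$ to \cref{prop:exact_least_square_solutions}, where the paper only invokes ``the definition of $\partial_x\phi$''.
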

\cref{prop:gradient_correction}, which is proven in  \cref{sec:proof_algorithms}, suggests a simple correction for the gradient estimate $d_k$ in \cref{alg:abg_alg}. 
By doing so, the corrected algorithm would be performing an approximate gradient descent on each of the upper-level and lower-level objectives, suggesting that the algorithm may recover equilibrium points of \cref{eq:ABG} without having to increase the computation budget for the unrolling as we show later in \cref{prop:critical_point_correction}. 
A simple way to proceed would to compute  $c_k$ satisfying the approximate equation $c_k {\approx} -B_k(A_k)^{\dagger}v_k$, where $A_k {=} \partial^2_{yy} g(\xmone,y_k)$, $B_k {:=}\partial^2_{xy} g(\xmone,y_k) $ and $v_k {=} \partial_y \mathcal{L}_T(\xmone,\tilde{y}_k)$.  
More concretely, $c_k$ can be computed by setting $c_k {=} B_k\xi_k$ where $\xi_k$ approximates the minimum norm solution to the least squares problem:
\begin{align}\label{eq:min_norm_least_square}
	 \xi_k \approx \arg\min_{\xi}\Verts{\xi}^2, \qquad s.t. \quad  \xi\in \arg\min_{\xi}\Verts{ A_k \xi + v_k}^2, 
\end{align}
{\bf Approximate solution to \cref{eq:min_norm_least_square}.} 
It is possible to solve \cref{eq:min_norm_least_square} approximately using an iterative procedure by constructing $N$ iterates $\xi^t$ starting from $\xi^0 =0$ and performing (conjugate) gradient descent on the quadratic objective. 
This can be implemented efficiently using only Hessian vector products with the Hessian $A_k$ \citep{Lorraine:2020}. 
The constrained problem \cref{eq:min_norm_least_square} can also be expressed as an unconstrained one by re-parametrizing $\xi = A_k z$:
\begin{align}\label{eq:reparametrization}
	\xi_k \approx A_kz_k^{\star}, \qquad s.t.\quad z_k^{\star}\in \arg\min_{z}\Verts{A_k^2z+v_k}^2.
\end{align}
Eq.~\cref{eq:reparametrization} has the advantage that $z_k^{\star}$ solves an unconstrained problem. As such,  it is more amenable to applying a warm-start strategy, which can yield efficient approximation $z_k$ to $z_k^{\star}$ by exploiting previously computed approximation $z_{k-1}$ to $z_{k-1}^{\star}$ ~\citep{Arbel:2021a}. This strategy can be achieved using a standard iterative algorithm $\mathcal{P}$ for approximately solving the least-squares problems, such as a fixed number of conjugate gradient iterations, that takes as input the matrix $A_k$, vector $v_k$ and initialization $z_{k-1}\approx z_{k-1}^{\star}$ and returns the next iterate $z_k\approx z_k^{\star}$. 
More formally we view $\mathcal{P}$ as a continuous map of $(A,v,z)\mapsto\mathcal{P}(A,v,z)$ returning a vector $z'$ and such that the only fixed points are exact solutions to the least square problem $\min_{z}\Verts{A^2z+v}^2$. We refer to \cref{sec:warm-start} for examples of such maps. We can then define the iterates $z_k$ and  $\xi_k$ as follows:
\begin{align}\label{eq:warm_start}
	\xi_k = A_kz_k, \qquad z_k = \mathcal{P}(A_k,v_k,z_{k-1}).
\end{align}
The corrected algorithm is obtained by setting the variable {\bf{AddCorrection}${=}$True} in \cref{alg:abg_alg} and computing the $\xi_k$ using any approximate solver including, in particular, the ones based on a warm-start strategy as in \cref{eq:warm_start}. The following proposition, with proof in \cref{sec:proof_algorithms}, shows that the proposed correction indeed yields equilibrium points of \cref{eq:ABG}.  
\begin{prop}\label{prop:critical_point_correction}
	Let $(x_k,y_k)$ be the iterates obtained using \cref{alg:abg_alg} with {\bf{AddCorrection}${=}$True} and $T+M>0$ and assume that $\xi_k$ are computed using \cref{eq:warm_start}. If $(x_k,y_k,z_k)_{k\geq 0}$  converges to a limit point $(x^{\star},y^{\star},z^{\star})$, then $y^{\star}$ is a critical point of $y\mapsto g(x^{\star},y)$ and if, in addition, $y^{\star}$ is a local minimizer, then $(x^*,y^*)$ must be an equilibrium of \cref{eq:ABG} satisfying  \cref{eq:SC}:
	\begin{align}
           \partial_x \mathcal{L}_{\mathcal{\phi}}(x^{\star},y^{\star})=0~~~~\text{and}~~~~ \partial_y g(x^{\star},y^{\star})=0
	\end{align}
\end{prop}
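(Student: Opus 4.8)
The plan is to analyze the fixed-point equations obtained by passing to the limit in the update rules of \cref{alg:abg_alg} with \textbf{AddCorrection}$=$True, and then invoke \cref{prop:gradient_correction} to identify the limit with an equilibrium of \cref{eq:ABG}. First I would use the continuity assumptions in \cref{assumpt:map_varphi} (continuity of $\mathcal{I}_M$, $\varphi_T$ and their derivatives) together with the continuity of the second-order derivatives of $g$ (guaranteed by \cref{assumpt:morse-bott}) and the continuity of the solver map $\mathcal{P}$ to take limits in each line of the loop. Writing $\tilde{y}^\star = \mathcal{I}_M(x^\star,y^\star)$, the convergence $y_k\to y^\star$ together with $y_k = \varphi_T(\xmone,\tilde{y}_k)$ gives $y^\star = \varphi_T(x^\star,\tilde{y}^\star)$, and the update $x_k = \xmone - \gamma d_k$ forces $d_k\to 0$, i.e. the corrected gradient vanishes in the limit: $\partial_x\mathcal{L}_T(x^\star,\tilde{y}^\star) + \partial^2_{xy}g(x^\star,y^\star)\xi^\star = 0$ with $\xi^\star = A^\star z^\star$, $A^\star = \partial^2_{yy}g(x^\star,y^\star)$. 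The convergence of the solver iterates $z_k\to z^\star$ with $z_k = \mathcal{P}(A_k,v_k,z_{k-1})$ and the fixed-point characterization of $\mathcal{P}$ (its only fixed points solve $\min_z\Verts{A^2 z + v}^2$ exactly) then yields that $z^\star$ is an exact minimum-norm-type solution, so that $\xi^\star = A^\star z^\star$ solves the least-squares problem \cref{eq:min_norm_least_square} with data $A^\star, v^\star = \partial_y\mathcal{L}_T(x^\star,\tilde{y}^\star)$; equivalently $\partial^2_{xy}g(x^\star,y^\star)\xi^\star = -\partial^2_{xy}g(x^\star,y^\star)(A^\star)^\dagger v^\star$.

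Next I would establish that $y^\star$ is a critical point of $g(x^\star,\cdot)$. This is where the last clause of \cref{assumpt:map_varphi} enters: the relation $y^\star = \varphi_T(x^\star,\mathcal{I}_M(x^\star,y^\star))$, which holds because $\tilde y^\star=\mathcal I_M(x^\star,y^\star)$ and $y^\star=\varphi_T(x^\star,\tilde y^\star)$, together with $T+M>0$ implies $\partial_y g(x^\star,y^\star)=0$. Once $y^\star$ is a critical point, the self-consistency part of \cref{assumpt:map_varphi} gives $\mathcal{I}_M(x^\star,y^\star)=\varphi_T(x^\star,y^\star)=y^\star$, so in fact $\tilde y^\star = y^\star$, which cleans up all the arguments above (we may replace $\tilde y^\star$ by $y^\star$ everywhere).

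Finally, assuming in addition that $y^\star$ is a local minimizer of $y\mapsto g(x^\star,y)$, I would apply \cref{prop:gradient_correction}: since $(x^\star,y^\star)$ satisfies the hypotheses (all of \cref{assumpt:Coercivity,assumpt:map_varphi,assumpt:morse-bott,assumpt:smootness} hold and $y^\star$ is a local minimum), the proposition gives
\begin{align*}
  \partial_x\mathcal{L}_{\phi}(x^\star,y^\star) = \partial_x\mathcal{L}_T(x^\star,y^\star) - \partial^2_{xy} g(x^\star,y^\star)\parens{\partial^2_{yy} g(x^\star,y^\star)}^{\dagger}\partial_y \mathcal{L}_T(x^\star,y^\star).
\end{align*}
The right-hand side is exactly $\partial_x\mathcal{L}_T(x^\star,y^\star) + \partial^2_{xy}g(x^\star,y^\star)\xi^\star = d^\star = 0$ by the fixed-point analysis. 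Hence $\partial_x\mathcal{L}_{\phi}(x^\star,y^\star)=0$, and combined with $\partial_y g(x^\star,y^\star)=0$ this is precisely \cref{eq:SC}, i.e. $(x^\star,y^\star)$ is an equilibrium of \cref{eq:ABG}.

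I expect the main obstacle to be the identification of $\xi^\star = A^\star z^\star$ with the pseudoinverse expression $-\partial^2_{xy}g(x^\star,y^\star)(A^\star)^\dagger v^\star$ when $A^\star$ is singular: one must argue that the warm-start solver $\mathcal{P}$, whose only fixed points solve $\min_z\Verts{A^2 z+v}^2$, produces at the limit a $z^\star$ such that $A^\star z^\star$ has no component in the kernel of $A^\star$ — or, more carefully, that whatever kernel component $A^\star z^\star$ may have, it is annihilated by $\partial^2_{xy}g(x^\star,y^\star)$. The latter is exactly the content of \cref{prop:exact_least_square_solutions} (referenced in the text as \cref{prop:exact_least_square_solutions} of \cref{sec:properties_parameteric_morse_bott}), which ensures $\partial^2_{xy}g(x^\star,y^\star)$ lies in the span of $\partial^2_{yy}g(x^\star,y^\star)$ at critical points under \cref{assumpt:morse-bott}; invoking it lets us conclude $\partial^2_{xy}g(x^\star,y^\star)\xi^\star = -\partial^2_{xy}g(x^\star,y^\star)(A^\star)^\dagger v^\star$ regardless of the kernel component, closing the gap. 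A secondary technical point is checking that $A_k^2 z^\star + v^\star = $ (its projection onto $\range A^\star$) vanishes, which is the defining property of the least-squares solution and follows from continuity of $\mathcal{P}$'s fixed-point condition.
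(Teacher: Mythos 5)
Your proposal is correct and follows essentially the same route as the paper's proof: pass to the limit in each update using continuity and the fixed-point characterization of $\mathcal{P}$, deduce criticality of $y^{\star}$ from \cref{assumpt:map_varphi}, and identify the limiting direction $d^{\star}$ with $\partial_x\mathcal{L}_{\phi}(x^{\star},y^{\star})$ via \cref{prop:gradient_correction}. The obstacle you flag about a possible kernel component of $\xi^{\star}=A^{\star}z^{\star}$ is moot: since $A^{\star}$ is symmetric, $A^{\star}z^{\star}$ lies in $\range(A^{\star})=\ker(A^{\star})^{\perp}$, and the limiting fixed-point equation $(A^{\star})^{3}z^{\star}+A^{\star}v^{\star}=0$ already forces $\xi^{\star}=-(A^{\star})^{\dagger}v^{\star}$ exactly, which is how the paper concludes without needing \cref{prop:exact_least_square_solutions}.
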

\cref{prop:critical_point_correction} shows that the proposed correction allows to recover equilibria of \cref{eq:ABG} without having to increase the number of iterations $T$ of the unrolled algorithm. This is by contrast with \cref{prop:critical_point_no_correction} where $T$ must increase to infinity, which would be impractical. 
We discuss in \cref{sec:recovering_existing_alg}  how different choices for the parameters $T$ and $M$ recover known algorithms. In particular, that \cref{alg:abg_alg} with correction allows interpolating between two families of algorithms: (ITD) and (AID) while still recovering the correct equilibria. Numerical results illustrating the benefits of the correction are presented in \cref{sec:experiments}. 

\section{Discussion}
We have introduced a bilevel game that resolves the ambiguity in bilevel
optimization with non-convex objectives using the notion of selection maps. We have shown that many
algorithms for bilevel optimization approximately solve these games up to a
bias due to finite computational power.  Our study of the differentiability
properties of the selection maps has resulted in practical procedures for correcting
such a bias and required the development of new analytical tools. This study
opens the way for several avenues of research to understand the tradeoff
between unrolling and implicit gradient correction for designing efficient
algorithms. In future work, studying these algorithms in a non-smooth and stochastic setting
would also be of great theoretical and practical interest.

\paragraph*{Funding}
This project was supported by ANR 3IA MIAI@Grenoble Alpes (ANR-19-P3IA-0003).

\bibliographystyle{plainnat}   
\bibliography{extracted_bib}   
\section*{Checklist}

The checklist follows the references.  Please
read the checklist guidelines carefully for information on how to answer these
questions.  For each question, change the default \answerTODO{} to \answerYes{},
\answerNo{}, or \answerNA{}.  You are strongly encouraged to include a {\bf
justification to your answer}, either by referencing the appropriate section of
your paper or providing a brief inline description. 
Please do not modify the questions and only use the provided macros for your
answers.  Note that the Checklist section does not count towards the page
limit.  In your paper, please delete this instructions block and only keep the
Checklist section heading above along with the questions/answers below.

\begin{enumerate}

\item For all authors...
\begin{enumerate}
  \item Do the main claims made in the abstract and introduction accurately reflect the paper's contributions and scope?
    \answerYes{}
  \item Did you describe the limitations of your work?
    \answerYes{}
  \item Did you discuss any potential negative societal impacts of your work?
    \answerNA{}
  \item Have you read the ethics review guidelines and ensured that your paper conforms to them?
    \answerYes{}
   \end{enumerate}

\item If you are including theoretical results...
\begin{enumerate}
  \item Did you state the full set of assumptions of all theoretical results?
    \answerYes{}
     \item Did you include complete proofs of all theoretical results?
    \answerYes{}
\end{enumerate}

\item If you ran experiments...
\begin{enumerate}
  \item Did you include the code, data, and instructions needed to reproduce the main experimental results (either in the supplemental material or as a URL)?
    \answerNA{}
  \item Did you specify all the training details (e.g., data splits, hyperparameters, how they were chosen)?
    \answerNA{} 
           \item Did you report error bars (e.g., with respect to the random seed after running experiments multiple times)?
    \answerNA{}
        \item Did you include the total amount of compute and the type of resources used (e.g., type of GPUs, internal cluster, or cloud provider)?
    \answerNA{}
\end{enumerate}

\item If you are using existing assets (e.g., code, data, models) or curating/releasing new assets...
\begin{enumerate}
  \item If your work uses existing assets, did you cite the creators?
    \answerNA{}
  \item Did you mention the license of the assets?
    \answerNA{}
  \item Did you include any new assets either in the supplemental material or as a URL?
    \answerNA{}
  \item Did you discuss whether and how consent was obtained from people whose data you're using/curating?
    \answerNA{}
  \item Did you discuss whether the data you are using/curating contains personally identifiable information or offensive content?
    \answerNA{}
\end{enumerate}

\item If you used crowdsourcing or conducted research with human subjects...
\begin{enumerate}
  \item Did you include the full text of instructions given to participants and screenshots, if applicable?
    \answerNA{}
  \item Did you describe any potential participant risks, with links to Institutional Review Board (IRB) approvals, if applicable?
    \answerNA{}
      \item Did you include the estimated hourly wage paid to participants and the total amount spent on participant compensation?
    \answerNA{}
\end{enumerate}

\end{enumerate}

\clearpage
\appendix

\section{Morse-Bott Lemma with Parameters}
\subsection{Background on Morse-Bott Functions}\label{sec:Morse-Bott-functions}
We recall the definition of classical Morse-Bott functions  \citep{Austin:1995,Feehan:2020a}, 
which we extend in \cref{sec:param_morse-bott} to the case where there is a dependence on some additional parameter $x$ in $\X$.
\begin{defn}[\bf Morse-Bott function]\label{def:morse_bott}
	Let $h:\Y\rightarrow \R$ be a real-valued twice continuousely differentiable function. Define $\mathcal{C}_h$ to be the set of critical points of $h$ and consider $y_0\in C_h$. 
	We say that $h$ is Morse-Bott at $y_0$, if there exists a open neighbordhood  $\mathcal{V}$ of $y_0$ such that $ C_h\cap \mathcal{V}$ is a connected sub-manifold of $\Y$ of dimension $\dim\parens{\text{Ker}(\partial_{yy}^2h(y_0))}$. 
	We say that $h$ is a \emph{Morse-Bott function}  if for any $y_0\in \mathcal{C}_h$, $h$ is Morse-Bott at $y_0$.
\end{defn}
Morse-Bott functions were introduced in the context of differential topology to analyze the geometry of a manifold by studying the properties of differentiable functions defined on that manifold \citep{Austin:1995}. Their main property is that all their critical points that are connected have the same type (same number of positive and negative eigenvalues for the Hessian), a fact expressed by the Morse-Bott lemma \citep[Theorem 2.10]{Feehan:2020a} that we generalize to the parametric setting in \cref{thm:morse-bott_lemma}. 
Morse-Bott functions form a \emph{generic} class of functions \citep{Martinez-Alfaro:2016}, meaning that any smooth function can always be slightly perturbed to become a smooth Morse-Bott function. 
Hence, in principle, requiring that $y\mapsto g(x,y)$ is a Morse-Bott function for any parameter $x\in\X$ is essentially a mild assumption. 
The Morse-Bott property allows characterizing the geometry of critical points of $g(x,.)$ for any $x$ and ensures that the selection map $\phi$ is well-defined \citep[Chapter 15]{Cohen:1991}. 
However, this condition does not provide any information about how the set of critical points evolves as the parameter $x$ varies, which is crucial for the study of smoothness of the \selection $\phi$. This is precisely why we introduced \emph{parametric Morse-Bott functions} in \cref{sec:param_morse-bott}.
\subsection{Properties of Parameteric Morse-Bott Functions.}\label{sec:properties_parameteric_morse_bott}
In this section, we describe some elementary properties of parametric Morse-Bott functions. In particular, \cref{prop:exact_least_square_solutions} shows that $\partial_{xy}^2 g(x,y)$ belongs to the range of $\partial_{yy}^2g(x,y)$ whenever $(x,y)$ is an augmented critical point of $g$, i.e. $\partial_y g(x,y)=0$. \cref{prop:pointwise_morse_bott} shows that any parametric Morse-Bott function $g$ satisfies a pointwise Morse-Bott property in the sense of \cref{def:morse_bott}.  Finally, \cref{prop:morse-functions-with-parameters,lem:morse_bott_diffeo} provide examples of functions that satisfy the parametric Morse-Bott property. Recall $\mathcal{M}$ the set of augmented critical points of $g$:
\begin{align}\label{eq:augmented_critical_points}
	\mathcal{M} = \braces{(x,y)\in \X\times \Y \middle| \partial_y g(x,y)=0}.
\end{align}
\begin{prop}[\bf Exact least square solution]\label{prop:exact_least_square_solutions}
Let $g$ be a parametric Morse-Bott function. 
Let $(x_0,y_0)$ be  an element in $\mathcal{M}$ defined in \cref{eq:augmented_critical_points} and define the matrices  $A := \partial_{yy}^2 g(x_0,y_0)$ and $B:= \partial_{xy}^2 g(x_0,y_0)$. Then, $B$ is in the range of $A$, i.e. there exists a matrix $U$ such that $B {=} UA$. 
\end{prop}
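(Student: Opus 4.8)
The plan is to deduce the statement from a dimension count on the augmented critical set $\mathcal{M}$, viewed as the zero level set of the $C^1$ map $F\colon \X\times\Y\to\R^d$ defined by $F(x,y):=\partial_y g(x,y)$. Its differential at $(x_0,y_0)$ is the $d\times(p+d)$ matrix $DF(x_0,y_0)=[\,B^{\top}\mid A\,]$, where the second block is $A=\partial^2_{yy}g(x_0,y_0)$ and the first block is the Jacobian of $x\mapsto\partial_y g(x,y_0)$, which equals the transpose of $B=\partial^2_{xy}g(x_0,y_0)$ in the layout used elsewhere in the paper.

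First I would invoke \cref{def:parameteric_morse_bott}: on a neighborhood $\mathcal{V}$ of $(x_0,y_0)$, the set $\mathcal{M}\cap\mathcal{V}$ is a $C^2$ submanifold of dimension $p+\dim\ker A$. Since $\mathcal{M}\cap\mathcal{V}\subseteq F^{-1}(0)$, differentiating $F$ along any $C^1$ curve that stays in $\mathcal{M}$ shows $T_{(x_0,y_0)}(\mathcal{M}\cap\mathcal{V})\subseteq\ker DF(x_0,y_0)$; comparing dimensions and applying rank--nullity to $A$ (a matrix of size $d$) and to $DF(x_0,y_0)$ gives
\[
p+\dim\ker A\;\le\;\dim\ker DF(x_0,y_0)\;=\;(p+d)-\operatorname{rank} DF(x_0,y_0),\qquad\text{hence}\qquad \operatorname{rank} DF(x_0,y_0)\;\le\;\operatorname{rank} A .
\]
The reverse inequality $\operatorname{rank} DF(x_0,y_0)\ge\operatorname{rank} A$ is immediate because $A$ is a block of columns of $DF(x_0,y_0)$, so the two ranks coincide. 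Equality of ranks forces every column of $B^{\top}$ to lie in $\range(A)$, i.e., using that $A$ is symmetric so that $\range(A)=\ker(A)^{\perp}$, that $\ker A\subseteq\ker B$. This is exactly what is needed: then $B\,(I-A^{\dagger}A)=0$, equivalently $B=(BA^{\dagger})A$, so $U:=BA^{\dagger}$ satisfies $B=UA$.

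I do not expect a genuine obstacle. The only points requiring care are the bookkeeping of transposes and layouts relating $\partial^2_{xy}g$ to the Jacobian block of $F$, and the elementary (but worth stating) fact that a submanifold contained in a level set has its tangent space contained in the kernel of the differential of the defining map; the remainder is rank--nullity together with the symmetry of the Hessian.
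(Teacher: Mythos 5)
Your proposal is correct and follows essentially the same route as the paper's proof: both exploit the Morse--Bott dimension count for $\mathcal{M}$, the containment of its tangent space in the kernel of the differential of $(x,y)\mapsto\partial_y g(x,y)$, and the fact that $A$ is a sub-block of that differential to force $\operatorname{rank}[B^\top\,|\,A]=\operatorname{rank}A$, hence $\range(B^\top)\subseteq\range(A)$. Your version is slightly more careful on two points the paper glosses over (the tangent space is only \emph{contained} in the kernel, and the explicit choice $U=BA^{\dagger}$), but these are refinements, not a different argument.
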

\begin{proof}
Recall that $\mathcal{M}$ is the set of augmented critical points of $g$. Since $g$ is a parametric Morse-Bott function, there exists a neighborhood $\mathcal{U}$ of $(x_0,y_0)$ such that the augmented critical set $\mathcal{M}\cap \mathcal{U}$ is a $C^2$ manifold of dimension $d_{\mathcal{M}} {=} \dim(\X) + \dim(Ker(\partial_{yy}^2 g(x_0,y_0)))$. We know that $\mathcal{M}\cap \mathcal{U}$ is characterized locally by the equation $\partial_y g(x_0,y_0)=0$, hence the tangent space $T{\mathcal{M}}_{(x_0,y_0)}$ of $\mathcal{M}\cap \mathcal{U}$ at point $(x_0,y_0)$ consist of the set of directions $(u,v)\in \X\times \Y$ for which $\partial_y g(x_0+\epsilon u,y_0+\epsilon v)= O(\epsilon^2)$. In other words $T{\mathcal{M}}_{(x_0,y_0)}$ is the set of vectors $(u,v)\in \X\times \Y$ of $\mathcal{M}\cap \mathcal{U}$  satisfying the equation:
	\begin{align}
		u^{\top}\partial_{xy}^2 g(x_0,y_0) + v^{\top} \partial_{yy}^2 g(x_0,y_0) {=} u^{\top}B+ v^{\top} A{=}0.
	\end{align}
	 Since $\mathcal{M}\cap \mathcal{U}$ is of dimension $d_{\mathcal{M}}$, the tangent space $T\mathcal{M}_{(x_0,y_0)}$ must also have dimension $d_{\mathcal{M}}$.  Therefore, by the rank theorem, it must hold that the matrix $D = (B,A)$ has a rank equal to $\dim(\mathcal{X}) + \dim(\mathcal{Y}) - d_{\mathcal{M}} = \text{rank}(A)$. On the other hand, we know that $0^{\top}B + v^{\top}A = v^{\top}A\in Range(A)$ for any $v\in \mathcal{Y}$, so that $Range(A)\subset Range(D)$. The two subspaces having the same dimension, the inclusion implies equality ($Range(A)=Range(D)$). Henceforth, there must exist a matrix $U$ such that $B$ can be written as $B{=} UA$.
\end{proof}

\begin{prop}[\bf Pointwise Morse-Bott property]\label{prop:pointwise_morse_bott}
	Let $g$ be a parametric Morse-Bott function. Then for any $x\in \X$, the function $y\mapsto g(x,y)$ is a Morse-Bott function in the following sense:
	For any $x_0$ and any critical point $y_0$ of $g(x_0,.)$,  there exists an open neighborhood $\mathcal{V}$ of $y_0$ so that $C_{x,y_0}:=\braces{y\in \Y\middle | \partial_y g(x_0,y)=0}\cap\mathcal{V}$ is a connected sub-manifold of dimension equal to the dimension of the null space of the Hessian $\partial_{yy}^2 g(x,y_0)$.
\end{prop}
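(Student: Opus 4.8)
The plan is to obtain the local structure of $C_{x_0,y_0}$ by slicing the augmented critical set $\mathcal{M}$ — which by \cref{def:parameteric_morse_bott} is already a submanifold of controlled dimension — along a fixed value of the parameter, and to check that the slice has the expected dimension by showing that the projection $(x,y)\mapsto x$ restricts to a submersion on $\mathcal{M}$. Throughout, fix $x_0\in\X$ and a critical point $y_0$ of $g(x_0,\cdot)$, so that $(x_0,y_0)\in\mathcal{M}$, and set $A:=\partial^2_{yy}g(x_0,y_0)$ and $B:=\partial^2_{xy}g(x_0,y_0)$. By \cref{def:parameteric_morse_bott} there is an open neighborhood $\mathcal{V}'$ of $(x_0,y_0)$ on which $\mathcal{M}\cap\mathcal{V}'$ is a connected submanifold of dimension $m:=\dim(\X)+\dim(\mathrm{Ker}\,A)$. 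First I would record its tangent space: since $\mathcal{M}\cap\mathcal{V}'\subseteq\{\partial_y g=0\}$, linearizing the constraint along curves gives $T_{(x_0,y_0)}\mathcal{M}\subseteq\{(u,v)\in\X\times\Y : u^\top B+v^\top A=0\}$, and \cref{prop:exact_least_square_solutions} (which provides $B=UA$, equivalently $\mathrm{Range}(B^\top)\subseteq\mathrm{Range}(A)$) forces the defining linear map to have rank $\mathrm{rank}(A)$, so this right-hand side has dimension $\dim(\X)+\dim(\mathrm{Ker}\,A)=m$; hence the inclusion is an equality — this is exactly the tangent-space identification already carried out inside the proof of \cref{prop:exact_least_square_solutions}.

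Next I would let $\pi:\mathcal{M}\cap\mathcal{V}'\to\X$ be the restriction of $(x,y)\mapsto x$. Its differential at $(x_0,y_0)$ is the coordinate projection restricted to the tangent space above, and it is onto $\X$: given $u\in\X$, \cref{prop:exact_least_square_solutions} gives $B^\top u\in\mathrm{Range}(A)$, so $Av=-B^\top u$ has a solution $v$, and then $(u,v)\in T_{(x_0,y_0)}\mathcal{M}$. Thus $\pi$ is a submersion at $(x_0,y_0)$, and the submersion theorem (a special case of the constant-rank theorem \citep[Theorem 4.12]{Lee:2003}) yields a smaller neighborhood $\mathcal{V}''\subseteq\mathcal{V}'$ on which the fibre $\pi^{-1}(x_0)\cap\mathcal{V}''=\bigl(\mathcal{M}\cap(\{x_0\}\times\Y)\bigr)\cap\mathcal{V}''$ is a submanifold of dimension $m-\dim(\X)=\dim(\mathrm{Ker}\,A)$; shrinking $\mathcal{V}''$ to a chart that flattens $\pi$ exhibits this fibre as a coordinate ball, hence connected. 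Identifying $(x_0,y)$ with $y$ and putting $\mathcal{V}:=\{y\in\Y : (x_0,y)\in\mathcal{V}''\}$, this fibre is precisely $C_{x_0,y_0}=\{y\in\Y : \partial_y g(x_0,y)=0\}\cap\mathcal{V}$, a connected submanifold of $\Y$ of dimension $\dim(\mathrm{Ker}\,\partial^2_{yy}g(x_0,y_0))$, which is the assertion.

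The only genuine obstacle I foresee is the dimension count that promotes $\pi$ from a map of a priori variable rank to an actual submersion — equivalently, the fact that $B^\top u$ always belongs to $\mathrm{Range}(A)$ at an augmented critical point, which is exactly where the defining dimension formula of a parametric Morse-Bott function (through \cref{prop:exact_least_square_solutions}) does its work; everything after that is a textbook use of the submersion/constant-rank theorem. As a sanity check, when $g$ is moreover $C^3$ one reaches the same conclusion via \cref{thm:morse-bott_lemma}: in its straightening coordinates $(x,r,w)$ the normal form $g(\psi(x,0,0))+\tfrac12 w^\top J_0 w$, with $J_0$ invertible and $\psi$ parameter-preserving, forces $\partial_y g=0\iff w=0$ by the chain rule, so the slice over $x_0$ is the image under $\psi(x_0,\cdot,0)$ of an open subset of $\mathcal{K}=\mathrm{Ker}\,A$, a connected submanifold of dimension $\dim\mathcal{K}=\dim(\mathrm{Ker}\,A)$; the tangent-space argument is preferable because it uses only the $C^2$ regularity available here.
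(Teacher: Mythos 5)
Your proof is correct and follows essentially the same route as the paper's: both arguments hinge on \cref{prop:exact_least_square_solutions} (i.e.\ $B=UA$) to identify the tangent space of $\mathcal{M}$ at $(x_0,y_0)$ and to show it surjects onto $\X$ under the coordinate projection, and then invoke a standard differential-topology result to conclude. The only cosmetic difference is that the paper phrases the final step as transversality of $\mathcal{M}$ with the slice $\{x_0\}\times\Y$ \citep[Theorem 6.30]{Lee:2003}, whereas you phrase it as the projection $\pi:\mathcal{M}\to\X$ being a submersion and slice along the fibre $\pi^{-1}(x_0)$ --- these are equivalent formulations of the same geometric fact.
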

\begin{proof}
	Let $(x_0,y_0)$ be in $\X\times \Y$ such that $\partial_yg(x_0,y_0)=0$. Then, since $g$ is a parameteric Morse-Bott function, there exists a neighborhood $\mathcal{U}$ of $(x_0,y_0)$ such that the augmented critical set $\mathcal{M}\cap \mathcal{U}$ is a $C^2$ manifold of dimension $d_{\mathcal{M}} {=} \dim(\X) + \dim(Ker(\partial_{yy}^2 g(x_0,y_0)))$.   On the other hand, we know that $\mathcal{M}\cap \mathcal{U}$ is characterized locally by the equation $\partial_y g(x,y)=0$, hence the tangent vectors $(u,v)\in \X\times \Y$ of $\mathcal{M}\cap \mathcal{U}$ at $(x_0,y_0)$ must satisfy the equation:
	\begin{align}
		u^{\top}\partial_{xy}^2 g(x_0,y_0) + v^{\top} \partial_{yy}^2 g(x_0,y_0) {=} 0.
	\end{align}
	For simplicity, we denote by $B = \partial_{xy}^2 g(x_0,y_0)$ and $A=\partial_{yy}^2 g(x_0,y_0)$. By \cref{prop:exact_least_square_solutions}, we know that $B$ can be written in the form $B=UA$ for some matrix. Hence, the tangent space of $\mathcal{M}$ at $(x_0,y_0)$ consists in vectors $(u,v)\in \X\times \Y$ satisfying 
 	\begin{align}
		\parens{u^{\top}U+v}A=0.
	\end{align}
	In particular, for any $u\in \X$, we can set $v= -u^{\top}U$ which ensures that $(u,v)$ is in the tangent space of $\mathcal{M}$ at $(x_0,y_0)$. Now consider the sub-manifold $\{x_0\}\times \Y$, its tangent space at $(x_0,y_0)$ is $\{0\}\times \Y$. 
	For any element $(x,y)\in \X\times \Y$, we have the decomposition $(x,y){=} (x,-x^{\top}U) + (0,y+x^{\top}U)$ where the first tuple belongs to the tangent space of $\mathcal{M}$ and the second one belongs to the tangent space of $\{x_0\}\times \Y$ at $(x_0,y_0)$. Hence, the tangent space of $\X\times \Y$ is generated by the both separate tangent spaces which means that both manifolds intersect transversally and that $\{x_0\}\times\mathcal{C}:=(\mathcal{M}\cap \mathcal{U})\cap (\{x_0\}\times \Y)$ is a sub-manifold of dimension $\dim\parens{Ker\parens{ \partial_{yy}^2 g(x_0,y_0)}}$ \citep[Theorem 6.30]{Lee:2003}. For a small enough open connected neighborhood $\mathcal{V}$ of $y_0$, we can ensure that $\mathcal{C}\cap \mathcal{V}$ is a connected sub-manifold of $\mathcal{Y}$. This precisely means that $y\mapsto g(x_0,y)$ is Morse-Bott at the point $y_0$ which concludes the proof.
\end{proof}

\begin{prop}[\bf Morse functions with parameters]\label{prop:morse-functions-with-parameters}
	Let $g:\X\times\Y$ be a three-times continuously differentiable function such that for any $(x,y)\in \X\times \Y$ for which $\partial_y g(x,y)=0$, the Hessian matrix $\partial_{yy}^2g(x,y)$ is invertible. Then $g$ is a parametric Morse-Bott function.
\end{prop}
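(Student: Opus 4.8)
The plan is to apply the implicit function theorem to the map $F:\X\times\Y\to\Y$ defined by $F(x,y):=\partial_y g(x,y)$, whose zero set is exactly the augmented critical set $\mathcal{M}$ from \cref{def:parameteric_morse_bott}. Fix an arbitrary $(x_0,y_0)\in\mathcal{M}$. By hypothesis the Hessian $\partial^2_{yy}g(x_0,y_0)$ is invertible, so $\dim(\mathrm{Ker}(\partial^2_{yy}g(x_0,y_0)))=0$ and the dimension prescribed in \cref{def:parameteric_morse_bott} reduces to $\dim(\X)=p$. We must therefore produce an open neighborhood $\mathcal{V}$ of $(x_0,y_0)$ on which $\mathcal{M}\cap\mathcal{V}$ is a connected $C^2$ submanifold of $\X\times\Y$ of dimension $p$.

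Since $g$ is three-times continuously differentiable, $F$ is of class $C^2$; moreover $F(x_0,y_0)=0$ and the partial Jacobian $\partial_y F(x_0,y_0)=\partial^2_{yy}g(x_0,y_0)$ is invertible. The implicit function theorem \cite[Theorem 5.9]{Lang:2012} then yields a connected open neighborhood $\mathcal{V}_x\subset\X$ of $x_0$, an open neighborhood $\mathcal{V}_y\subset\Y$ of $y_0$, and a $C^2$ map $y^\star:\mathcal{V}_x\to\mathcal{V}_y$ with $y^\star(x_0)=y_0$ such that, setting $\mathcal{V}:=\mathcal{V}_x\times\mathcal{V}_y$, one has $F(x,y)=0$ if and only if $y=y^\star(x)$ for every $(x,y)\in\mathcal{V}$. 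Hence $\mathcal{M}\cap\mathcal{V}=\{(x,y^\star(x)) : x\in\mathcal{V}_x\}$ is the graph of $y^\star$, i.e. the image of the $C^2$ embedding $x\mapsto(x,y^\star(x))$ of $\mathcal{V}_x\subset\R^p$ into $\X\times\Y$. Shrinking $\mathcal{V}_x$ to an open ball if necessary, this graph is a connected embedded $C^2$ submanifold of dimension $p=\dim(\X)+\dim(\mathrm{Ker}(\partial^2_{yy}g(x_0,y_0)))$, which is exactly the dimension required by \cref{def:parameteric_morse_bott}. As $(x_0,y_0)\in\mathcal{M}$ was arbitrary, $g$ is Morse-Bott at $y_0$ w.r.t. $x_0$ at every augmented critical point, hence $g$ is a parametric Morse-Bott function.

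There is essentially no genuine obstacle here: the statement is a direct consequence of the implicit function theorem, since non-degeneracy of the lower-level Hessian is precisely the invertibility hypothesis the theorem needs. The only point requiring minor care is the regularity bookkeeping, namely that the $C^3$ assumption on $g$ (rather than merely $C^2$, as in \cref{def:parameteric_morse_bott}) is used so that $F=\partial_y g$ is $C^2$, which is what makes the implicit solution map $y^\star$ of class $C^2$ and thus guarantees that the resulting augmented critical manifold has the required $C^2$ smoothness.
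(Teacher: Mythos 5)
Your proof is correct and follows essentially the same route as the paper: both apply the implicit function theorem to $F=\partial_y g$ at an augmented critical point, using the invertibility of $\partial^2_{yy}g$ to represent $\mathcal{M}$ locally as the graph of a $C^2$ map $x\mapsto y^\star(x)$, hence a connected $C^2$ submanifold of dimension $\dim(\X)$. The only cosmetic difference is that the paper additionally invokes the regular level set theorem to certify the manifold structure, whereas you conclude directly from the graph parametrization, which suffices.
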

\begin{proof}
	Let $(x_0,y_0)\in \X\times\Y$ be such that $y_0$ is a critical point of $g(x_0,.)$ ( i.e. $\partial_y g(x_0,y_0){=}0$). Since, by assumption, the Hessian is invertible, we can apply the implicit function theorem which guarantees the existence of a function $x\mapsto y(x)$ defined in a neighborhood $\mathcal{U}$ of $x_0$ and taking values in a neighborhood $\mathcal{V}$ of $y_0$, such that $y(x_0)=y_0$ and $y(x)$ is the unique critical point of $g(x,.)$ on $\mathcal{V}$, i.e.:
	\begin{align}
		\partial_y g(x,y(x))=0,\qquad \forall x\in \mathcal{U}.
	\end{align}
	 Moreover, $x\mapsto y(x)$ is twice continuously differentiable.  This ensures that $\mathcal{M}$ the set of augmented critical points of $g$ satisfies:
	\begin{align}
		\mathcal{M}\cap\parens{\mathcal{U}\times \mathcal{V}} = \braces{(x,y(x))\in \X\times \Y \middle| x\in \mathcal{U} } := \mathcal{S}.
	\end{align}
	We only need to show that $\mathcal{M}\cap\parens{\mathcal{U}\times \mathcal{V}}$ is a manifold of dimension $\dim(\X)$. For this, we will apply the regular level set theorem ~\citep[Corollary 5.14]{Lee:2003} to the function $G: (x,y)\mapsto \partial_y g(x,y)$ defined on $\mathcal{U}\times \mathcal{V}$. The pre-image of $0$ by $G$ is exactly equal to $\mathcal{M}\cap\parens{\mathcal{U}\times \mathcal{V}}$. Moreover, for any $(x,y)\in \mathcal{M}\cap\parens{\mathcal{U}\times \mathcal{V}}$, we have that $dG(x,y)$ is of maximal rank since $\partial_{yy}^2 g(x,y)$ is invertible. Hence, by application of the regular level set theorem theorem to the twice continuously differentiable ($C^2$) function $G$, it follows that $\mathcal{M}\cap\parens{\mathcal{U}\times \mathcal{V}}= G^{-1}(\braces{0})$ is a $C^2$ sub-manifold of $\X\times \Y$ of dimension $\dim(ker(dG(x,y)))=\dim(\X)$. We have shown that $\mathcal{M}\cap\parens{\mathcal{U}\times \mathcal{V}}$ is sub-manifold of dimension $\dim(\X)$, which proves the result.
\end{proof}

\begin{lem}\label{lem:morse_bott_diffeo}
	Let $h$ be a smooth Morse-Bott function defined  on $\Y$. Let $\mathcal{T}:\X\times \Y\rightarrow \Y$ be a smooth function, such that $y\mapsto \mathcal{T}(x,y) = \tau_x(y)$  is a diffeomorphism on $\Y$ for any $x\in \X$. Then the function $g(x,y) = h(\tau_x(y))$ is a parametric Morse-Bott function. 
\end{lem}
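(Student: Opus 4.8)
The plan is to pull the statement back to the Morse-Bott property of $h$ through the change of variables $\Phi(x,y):=(x,\tau_x(y))$ on $\X\times\Y$, which preserves the first coordinate. The key observation is a chain-rule computation: writing $J_x(y):=\partial_y\tau_x(y)$ for the Jacobian of the diffeomorphism $\tau_x$ at $y$, which is invertible for every $(x,y)$, one has $\partial_y g(x,y)=J_x(y)^{\top}\nabla h(\tau_x(y))$. Hence $(x,y)$ lies in the augmented critical set $\mathcal{M}$ if and only if $\nabla h(\tau_x(y))=0$, that is $\tau_x(y)\in C_h$, the critical set of $h$. Equivalently, $\mathcal{M}=\Phi^{-1}\parens{\X\times C_h}$.

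Next I would record that $\Phi$ is a diffeomorphism of $\X\times\Y$ onto itself: it is smooth and bijective with inverse $(x,z)\mapsto(x,\tau_x^{-1}(z))$, and the joint smoothness of this inverse follows from the implicit function theorem applied to $\mathcal{T}(x,y)-z=0$, using that $\partial_y\mathcal{T}(x,y)=J_x(y)$ is invertible. For the purely local statement of \cref{def:parameteric_morse_bott} one actually only needs $\Phi$ to be a local diffeomorphism near each point, which is immediate from the inverse function theorem since $d\Phi(x,y)$ is block lower-triangular with invertible diagonal blocks $I_{\X}$ and $J_x(y)$.

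Now fix $(x_0,y_0)\in\mathcal{M}$ and set $z_0:=\tau_{x_0}(y_0)\in C_h$. Since $h$ is Morse-Bott, there is an open neighborhood $\mathcal{W}$ of $z_0$ such that $C_h\cap\mathcal{W}$ is a connected submanifold of dimension $\dim\parens{\ker\nabla^2h(z_0)}$, so $\X\times\parens{C_h\cap\mathcal{W}}$ is a submanifold of $\X\times\Y$ of dimension $\dim(\X)+\dim\parens{\ker\nabla^2h(z_0)}$. Transporting this submanifold through the diffeomorphism $\Phi$ and restricting to a small connected neighborhood $\mathcal{V}$ of $(x_0,y_0)$ shows that $\mathcal{M}\cap\mathcal{V}=\Phi^{-1}\parens{\X\times\parens{C_h\cap\mathcal{W}}}$ is a connected $C^2$ submanifold of $\X\times\Y$ of the same dimension. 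To match this dimension with the one prescribed by \cref{def:parameteric_morse_bott}, differentiate $\partial_y g(x_0,y)=J_{x_0}(y)^{\top}\nabla h(\tau_{x_0}(y))$ in $y$ at $y_0$: the term carrying second derivatives of $\tau_{x_0}$ vanishes because $\nabla h(z_0)=0$, leaving $\partial^2_{yy}g(x_0,y_0)=J_{x_0}(y_0)^{\top}\,\nabla^2h(z_0)\,J_{x_0}(y_0)$. As $J_{x_0}(y_0)$ is invertible, this congruence preserves rank, hence $\dim\parens{\ker\partial^2_{yy}g(x_0,y_0)}=\dim\parens{\ker\nabla^2h(z_0)}$. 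Since $(x_0,y_0)\in\mathcal{M}$ was arbitrary, $g$ is a parametric Morse-Bott function.

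The only step with any real subtlety is showing that $\Phi$ --- equivalently the map $(x,z)\mapsto\tau_x^{-1}(z)$ --- is smooth jointly in its arguments, which is where the implicit/inverse function theorem enters; everything else is routine chain-rule bookkeeping and transport of submanifold structure through a diffeomorphism.
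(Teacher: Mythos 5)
Your proof is correct and follows essentially the same route as the paper's: both identify the augmented critical set of $g$ as the preimage of the critical set of $h$ under the change of variables $(x,y)\mapsto \tau_x(y)$ and transport the submanifold structure (the paper via transversality of $\mathcal{T}$, you via the equivalent observation that $\Phi(x,y)=(x,\tau_x(y))$ is a local diffeomorphism of the product). Your explicit congruence $\partial^2_{yy}g(x_0,y_0)=J_{x_0}(y_0)^{\top}\nabla^2h(z_0)J_{x_0}(y_0)$ justifying the kernel-dimension match is a detail the paper asserts without spelling out, and is a welcome addition.
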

\begin{proof}
	Consider the function $G: (x,y)\mapsto \partial_y g(x,y)$. We have the following equivalence
	\begin{align}
		(x,y)\in G^{-1}(\braces{0}) \iff \partial_y h(\tau_x(h))\partial_y\tau_x(y)=0\iff  \tau_x(y)\in \partial_y h^{-1}(\braces{0}).
	\end{align}
Consider the map $\mathcal{T}:(x,y)\mapsto \tau_x(y)$, then we have shown that $G^{-1}(\braces{0}) = \mathcal{T}^{-1}\parens{\partial_y h^{-1}(\braces{0})}$.	
Let $(x_0,y_0)\in \X\times \Y$ be an augmented critical point of $g$. Set $\tilde{y} = \mathcal{T}(x_0,y_0)$ which is a critical point of $h$. Since $h$ is, by assumption, Morse-Bott at $\tilde{y}$, then there exists an open neighborhood $\tilde{\mathcal{V}}$ of $\tilde{y}$ such that $ \partial_y h^{-1}(\braces{0})\cap \tilde{\mathcal{V}}$ is a sub-manifold of dimension $\dim(Ker(\partial_{yy}^2h(\tilde{y})))$. By continuity of $\mathcal{T}$, we can always find open connected neighborhoods $\mathcal{U}$ and $\mathcal{V}$ of $(x_0,y_0)$ so that $\mathcal{V}' {:=} \mathcal{T}(\mathcal{U}\times \mathcal{V})\subset \tilde{\mathcal{V}}$.  Moreover, since for any $x$ $\mathcal{T}(x,.)$ is a diffeomorphism, it must be that $\mathcal{V}'$ is an open set. Therefore, $\mathcal{S}:= \partial_y h^{-1}(\braces{0})\cap \mathcal{V}'$ must be a sub-manifold as of dimension $\dim(Ker(\partial_{yy}^2h(\tilde{y})))$. It remains to show that $\mathcal{T}^{-1}(\mathcal{S})$ is a sub-manifold.  To see this, it suffice to note that the differential of $\mathcal{T}$ is surjective which ensures that $\mathcal{T}$ is transverse to $\mathcal{S}$ and that $\mathcal{T}^{-1}(\mathcal{S})$  is a sub-manifold \citep[Theorem 6.30]{Lee:2003}. Moreover, the dimension of such manifold is  equal to $\dim(\X)+ \dim(ker(\partial_{yy}^2h(\tilde{y}))) = \dim(\X)+ \dim(ker(\partial_{yy}^2g(x_0,y_0)))$. 

\end{proof}

\subsection{Proof of the Morse-Bott Lemma with Parameters}\label{sec:proof_morse-bott_lemma}
In this section, we provide a proof of the Morse-Bott lemma with parameters introduced in \cref{thm:morse-bott_lemma}. We then introduces two results in \cref{corr:similar_hessian,cor:existence_diffeomorphism} which are consequences of \cref{thm:morse-bott_lemma}. \cref{corr:similar_hessian} shows that near an augmented critical point $(x_0,y_0)$, the Hessian matrices of nearby augmented critical points are all similar. This result illustrates that the geometry near a critical point is preserved when the parameter $x$ is perturbed.  \cref{corr:similar_hessian} will be used later in  \cref{prop:boundedness_U} of \cref{sec:differentiability-selection} to show that the pseudo-inverse of the Hessian matrices of critical points near a local minimum are uniformly bounded.  Finally, \cref{cor:existence_diffeomorphism} shows that near any augmented critical point $(x_0,y_0)$ the function $y\mapsto g(x,y)$ can be expressed as a slight deformation of $y\mapsto g(x_0,y)$. This result, along with the stability result in \cref{sec:stability_gradient_flow} of the gradient flow to deformations will be key to prove the continuity of the selection map $x\mapsto \phi(x,y)$ near local minima. 

\begin{proof}[Proof of \cref{thm:morse-bott_lemma}]
Let $x_0\in \X$ and $y_0$ be a critical point of $g(x_0,.)$. Denote by $\mathcal{K}$ the null space of the Hessian $A_0 = \partial_{yy}^2 g(x_0,y_0)$ and by $\mathcal{K}^{\perp}$ its orthogonal complement in $\Y$. The function $g(x_0,.)$ is a Morse-Bott function by \cref{prop:pointwise_morse_bott}, therefore by the Morse-Bott lemma \cite[Theorem 2.10]{Feehan:2020a}, there exists three open neighborhoods $\mathcal{O}$, $\mathcal{O}^{\perp}$ and $\mathcal{V}$ of $0\in \mathcal{K}$, $0\in \mathcal{K}^{\perp}$ and $y_0\in \Y$ and a diffeomorphism $s: \mathcal{O}\times \mathcal{O}^{\perp}\rightarrow \mathcal{V}$ s.t. $s(0,0)=y_0$ and for any $r,w\in \mathcal{O}\times \mathcal{O}^{\perp}$ it holds that:
\begin{align}
	g(x_0,s(r,w)) = g(x_0,y_0) + \frac{1}{2}w^{\top}J_0w, \forall r,w\in \mathcal{O}\times \mathcal{O}^{\perp}.
\end{align} 
where $J_0$ is an invertible diagonal matrix whose diagonal elements are equal to the sign of the non-zero eigenvalues of the Hessian $\partial_{yy}^2 g(x_0,y_0)$. By convention $J_0 {=} 0$ in case the Hessian  $\partial_{yy}^2 g(x_0,y_0)=0$. 
Since, the function $h(x,r,w) := g(x,s(r,w))$ is such that $\partial_{w} h(x_0,0,0) = 0$ and the partial Hessian $\partial_{ww}^2 h(x_0,0,0) = J_0$ is invertible, we are in position to apply the Morse lemma with parameters \cite[Theorem 4]{Feehan:2020a}. 
The lemma ensures that  $\mathcal{O}$ and $\mathcal{O}^{\perp}$ can be chosen small enough so that there exits open neighborhoods $\mathcal{B}$ and $ \mathcal{O}^{\perp}_1$ of $x_0\in \X$ and $0\in \mathcal{K}^{\perp}$ and a diffeomorphism $\tau$ from $\mathcal{B}\times \mathcal{O}\times \mathcal{O}_1^{\perp}$ to $\mathcal{B}\times \mathcal{O}\times \mathcal{O}^{\perp}$ such that $\tau(x_0,0,0) {=} (x_0,0,0)$ and decomposing $h$ locally into a quadratic component and a singular one. More precisely, for any $(x,r,w)\in \mathcal{B}\times \mathcal{O}\times \mathcal{O}_1^{\perp}$, the map $r$ satisfies $\tau(x,r,w) {=} (x,r,w')$ for some $w'\in \mathcal{O}^{\perp}$ and the following equation holds:
	\begin{align}\label{eq:local_parameterization}
	h(\tau(x,r,w)) = h(\tau(x,r,0)) + \frac{1}{2} w^{\top}J_0w.
		\end{align}
It remains to show that $\xi\mapsto h(\tau(x,r,0))$ is in fact constant for $(x,r)$ in an open neighborhood of $(x_0,0)\in \X\times \mathcal{O}$. To this end, define the sets $A$, $B$  and $C$ as follows:
\begin{align}
	A &:= \braces{(x,y) \in \mathcal{B}\times \mathcal{V}\quad \middle| \quad \partial_y g(x,y)=0},\\
	B &:= \braces{(x,r,w)\quad \middle| \quad (x,r) \in \mathcal{B}\times \mathcal{O}, \quad \partial_{r} h(\tau(x,r,w))=0},\\
	C &:= \braces{(x,r,0)\quad \middle| \quad (x,r) \in \mathcal{B}\times \mathcal{O}, \quad \partial_{r} h(\tau(x,r,0))=0}.
\end{align}
Then by \cref{eq:local_parameterization}, it holds that $B=C$. Moreover, $A$ and $B$ are homeomorphic. Indeed to see this, we introduce the notation $\tilde{s}(x,r,w) := (x,s(r,w))$ which defines a diffeomorphism from $\mathcal{B}\times \mathcal{O}\times \mathcal{O}^{\perp}$ to $\mathcal{B}\times \mathcal{V}$. Hence, $g\circ \tilde{s}\circ \tau = h\circ \tau$. This ensures $\tilde{s}\circ \tau(B) {=} A $,  which means precisely that $A$ and $B$ are homeomorphic since $\tilde{s}\circ \tau$ is a homeomorphism. Moreover, by definition of $g$ as a parametric Morse-Bott function, we also know that $A$ is a sub-manifold of $\X\times \Y$ of dimension $\dim(\X) + \dim\parens{Ker \parens{\partial_{yy}^2 g(x_0,y_0)}}$ provided the neighborhoods $\mathcal{B}$ and $\mathcal{V}$ are small enough. Hence, we can deduce that $B$ and $C$ must also be sub-manifolds of the same dimension. In particular, $C$ is a sub-manifold of $\mathcal{B}\times \mathcal{O}\times \{0\}$ which is of dimension $\dim(\X) + \dim\parens{Ker \parens{\partial_{yy}^2 g(x_0,y_0)}}$. Therefore, $C$ is an open sub-manifold of $\mathcal{B}\times \mathcal{O}\times \{0\}$. Hence, since $(x_0,0,0)\in C$, there must exists an open connected neighborhood $\mathcal{B}_1\times \mathcal{O}_1\times \{0\}$ of $(x_0,0,0)$ in $\mathcal{B}\times \mathcal{O}\times \{0\}$ that is contained in $C$. Hence, we  deduce that for any $(x,r)\in \mathcal{B}_1\times \mathcal{O}_1 $, the function $h$ satisfies $\partial_{r}h(\tau(x,r,0))=0$ so that $h(\tau(x,r,0)) = h(\tau(x,0,0))$ on such neighborhood. Finally, we have shown that there exits 
$$
g \circ \tilde{s}\circ \tau(x,r,w) = g \circ \tilde{s}\circ \tau(x,0,0) + \frac{1}{2}w^{\top}J_0w.
$$
We conclude the proof by setting $\psi(x,r,w) = \tilde{s}\circ \tau(x,r,w)$ which is the desired diffeomorphism.
\end{proof}

\begin{prop}\label{corr:similar_hessian}
Let $g$ be a real-valued function such that \cref{assumpt:morse-bott} holds. Consider an augmented critical point $(x_0,y_0){\in} \mathcal{M}$,  with $\mathcal{M}$ defined in \cref{eq:augmented_critical_points}. 
Then there exists a neighborhood $\mathcal{V}$ of $(x_0,y_0)$ and a continuous map $(x,y)\mapsto P(x,y)$ defined on $\mathcal{V}$ with values in $\R^{d\times d}$ such that:
\begin{itemize}
	\item $P(x,y)$ is invertible for any $(x,y)\in \mathcal{V}$ with singular values contained in an interval $[\sigma_{\min},\sigma_{\max}]$ for some positive constants $\sigma_{\min}$ and $\sigma_{\max}$.
	\item For any augmented critical point $(x,y)\in \mathcal{V}$, the Hessian of $g$ is given by:
	$$
	\partial_{yy}^2 g(x,y) = P(x,y)^{\top}\partial_{yy}^2 g(x_0,y_0)P(x,y).
	$$ 
\end{itemize}

\end{prop}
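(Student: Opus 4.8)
The plan is to transfer the quadratic normal form of \cref{thm:morse-bott_lemma} into a conjugation identity between Hessians. First I would apply that theorem at $(x_0,y_0)$, obtaining neighborhoods $\mathcal{U}$ of $(x_0,0_{\mathcal{K}},0_{\mathcal{K}^{\perp}})$ and $\mathcal{V}$ of $(x_0,y_0)$ and a diffeomorphism $\psi:\mathcal{U}\to\mathcal{V}$ — which is $C^1$ since $g$ is $C^3$ — that preserves the first variable, so $\psi(x,r,w)=(x,\Psi(x,r,w))$ and
\[
g(x,\Psi(x,r,w))=G(x)+\tfrac12\,w^{\top}J_0 w,\qquad (x,r,w)\in\mathcal{U},
\]
with $G(x):=g(\psi(x,0_{\mathcal{K}},0_{\mathcal{K}^{\perp}}))$ and $J_0$ the diagonal sign matrix of the nonzero eigenvalues of $A_0:=\partial^2_{yy}g(x_0,y_0)$ on $\mathcal{K}^{\perp}$; write $\tilde{J}_0:=\diag(0_{\mathcal{K}},J_0)$ for the corresponding operator on $\Y=\mathcal{K}\oplus\mathcal{K}^{\perp}$. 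Denote by $M(x,r,w):=\partial_{(r,w)}\Psi(x,r,w)\in\R^{d\times d}$ the Jacobian of $\Psi$ in its last $d$ variables. Since the Jacobian $D\psi$ is block lower-triangular with diagonal blocks $I$ and $M$ and $\psi$ is a diffeomorphism, $M$ is continuous and invertible on $\mathcal{U}$.

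Next I would locate $\mathcal{M}$ inside the chart: differentiating the normal form in $(r,w)$ and using $\nabla_{(r,w)}\bigl[\tfrac12 w^{\top}J_0 w\bigr]=\tilde{J}_0(r,w)$ gives $M(x,r,w)^{\top}\partial_y g(x,\Psi(x,r,w))=\tilde{J}_0(r,w)$, which vanishes iff $w=0$ (as $M$ is invertible and $J_0$ is invertible on $\mathcal{K}^{\perp}$); hence $\psi$ carries $\{(x,r,0)\in\mathcal{U}\}$ bijectively onto $\mathcal{M}\cap\mathcal{V}$. Then, for an augmented critical point $(x,y)=\psi(x,r,0)$, I would apply the change-of-variables formula for the Hessian of a function at a critical point to $q_x:=g(x,\cdot)\circ\Psi(x,\cdot,\cdot)$, which by the normal form equals $G(x)+\tfrac12 w^{\top}J_0 w$; this formula, $\nabla^2 q_x(r,0)=M(x,r,0)^{\top}\,\partial^2_{yy}g(x,y)\,M(x,r,0)$, is legitimate even though $\Psi(x,\cdot,\cdot)$ is only $C^1$, because $q_x$ is visibly $C^2$ and one may compare the second-order Taylor expansion of $q_x$ with the one obtained by composing those of $g(x,\cdot)$ and $\Psi(x,\cdot,\cdot)$. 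Since $\nabla^2 q_x\equiv\tilde{J}_0$, this yields
\[
\tilde{J}_0=M(x,r,0)^{\top}\,\partial^2_{yy}g(x,y)\,M(x,r,0)
\]
for every augmented critical point in $\mathcal{V}$; in particular $\tilde{J}_0=M_0^{\top}A_0 M_0$ with $M_0:=M(x_0,0,0)$.

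I would then define, for $(x,y)\in\mathcal{V}$ with $(x,r,w):=\psi^{-1}(x,y)$,
\[
P(x,y):=M_0\,M(x,r,w)^{-1},
\]
which is continuous on $\mathcal{V}$ (a composition of $\psi^{-1}$, $M$, and matrix inversion, all continuous) and invertible there. For an augmented critical point $(x,y)$ we have $w=0$, so writing $M:=M(x,r,0)$ and eliminating $\tilde{J}_0$ between the last two displayed identities gives $\partial^2_{yy}g(x,y)=(M^{-1})^{\top}\tilde{J}_0\,M^{-1}=(M^{-1})^{\top}M_0^{\top}A_0\,M_0 M^{-1}=P(x,y)^{\top}\,\partial^2_{yy}g(x_0,y_0)\,P(x,y)$, as required. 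Finally, for the uniform bound on the singular values I would shrink $\mathcal{V}$ to an open neighborhood of $(x_0,y_0)$ whose closure is compact and contained in the original $\mathcal{V}$; then $(x,y)\mapsto\|P(x,y)\|$ and $(x,y)\mapsto\|P(x,y)^{-1}\|$ are continuous on a compact set, hence bounded, so one may take $\sigma_{\max}$ to be the supremum of $\|P\|$ and $\sigma_{\min}$ the reciprocal of the supremum of $\|P^{-1}\|$, which are positive and finite.

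The step I expect to be the main obstacle is the Hessian computation: one has to be careful that, although the normalizing chart from \cref{thm:morse-bott_lemma} may be only $C^1$ (so the usual $C^2$ hypothesis for the change-of-variables formula for Hessians is not met), the composite $g(x,\cdot)\circ\Psi(x,\cdot,\cdot)$ is nonetheless smooth, which rescues the identity; and one must correctly observe that the $x$-dependent singular term $G(x)$ contributes nothing to $\partial^2_{yy}g$ at critical points, leaving the fixed matrix $\tilde{J}_0$ — this is exactly what lets the conjugation by $M_0M^{-1}$ close, making the Hessians at all nearby augmented critical points mutually congruent via a continuous, uniformly non-degenerate conjugator.
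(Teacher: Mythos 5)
Your proof is correct and follows essentially the same route as the paper: both apply \cref{thm:morse-bott_lemma}, observe that the Hessian at a nearby augmented critical point is congruent to $\tilde J_0$ via the Jacobian of the normalizing chart (which requires exactly the care you flag, since the chart is only $C^1$; the paper handles this with a difference quotient of the gradient at the critical point, you with a second-order Taylor comparison), and then define $P$ as the product of the reference Jacobian with the inverse of the local one, bounding its singular values by compactness. The only cosmetic difference is that you work with the forward chart $\Psi$ and its Jacobian $M$ while the paper works with $\partial_y\psi^{-1}_{2,3}$; these are inverses of one another, so the resulting $P$ is the same.
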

\begin{proof}
Denote by $\mathcal{K}$ the null space of the Hessian $A_0 = \partial_{yy}^2 g(x_0,y_0)$ and by $\mathcal{K}^{\perp}$ its orthogonal complement in $\Y$. Let $J_0$ be a diagonal matrix with diagonal elements given by the sign of the non-zero eigenvalues of $A_0$. Since $g$ satisfies \cref{assumpt:morse-bott}, we apply \cref{thm:morse-bott_lemma} which ensures the existence of a diffeomorphism $\psi$ defined on an open neighborhood $\mathcal{U}$ of $(x_0,0,0)\in \X\times \mathcal{K}\times \mathcal{K}^{\perp} $ with values in an open neighborhood $\mathcal{V}$ of $(x_0,y_0)$ in $\X\times \Y$, s.t. $\psi(x_0,0,0)=(x_0,y_0)$ and for all $(x,r,w)\in \mathcal{U}$, $\psi$ satisfies $\psi(x,r,w){=} (x,y)$ and
\begin{align}\label{eq:parm_morse_bott}
	g(\psi(x,r,w)) &= g(\psi(x,0,0))+ \frac{1}{2}w^{\top}J_0w, \\
	&= g(\psi(x,0,0)) + \frac{1}{2}(r^{\top} w^{\top})\tilde{J}_0\begin{pmatrix}
		r\\
		w
	\end{pmatrix},
\end{align}
where we defined $\tilde{J}_0$ to be the matrix of dimension $d\times d$ given by:
\begin{align}
	\tilde{J}_0 =\begin{pmatrix}
		0 & 0\\
		0 & J_0 
	\end{pmatrix}.
\end{align}
Since $\psi$ is a diffeomorphism satisfying $\psi(x,r,w) = (x,y)$,  we can equivalently write \cref{eq:parm_morse_bott} as:
\begin{align}\label{eq:parm_morse_bott_1}
	g(x,y) = g(\psi(x,0,0)) + \frac{1}{2}\psi^{-1}_{2,3}(x,y)^{\top}\tilde{J}_0\psi^{-1}_{2,3}(x,y),\qquad \forall  (x,y)\in \mathcal{V}, 
\end{align}
where $\psi_{2,3}^{-1}(x,y)$ are last two components of $\psi^{-1}(x,y)$ (i.e. $\psi^{-1}(x,y) = (x,\psi_{2,3}^{1}(x,y))$). 
By differentiating \cref{eq:parm_morse_bott_1} w.r.t. $y$ we obtain:
\begin{align}\label{eq:grad_morse_bott}
	\partial_y g(x,y) = \partial_y \psi^{-1}_{2,3}(x,y)\tilde{J}_0 \psi^{-1}_{2,3}(x,y). 
\end{align}
$\partial_y \psi^{-1}_{2,3}(x,y)$ must be  invertible since $(\partial_x \psi^{-1}, \partial_y \psi^{-1})$ is invertible and of the form:
\begin{align}
	\begin{pmatrix}
		\partial_x \psi^{-1}\\ \partial_y \psi^{-1}
	\end{pmatrix}
 = \begin{pmatrix}
		I & \partial_x \psi_{2,3}^{-1}\\
		0 & \partial_y \psi_{2,3}^{-1}
	\end{pmatrix}.
\end{align}
Therefore, if $y$ is a critical point of $g(x,.)$, then  \cref{eq:grad_morse_bott} implies that $\tilde{J}_0 \psi^{-1}_{2,3}(x,y) = 0$. Let $(x,y)$ be an augmented critical point of $g$,  $\epsilon>0$ and $u$ be vector in $\Y$, then the following holds:
\begin{align}
	\frac{1}{\epsilon}\partial_y g(x,y+\epsilon u) = \parens{\partial_y \psi_{2,3}^{-1}(x,y+\epsilon u)}^{\top}\tilde{J}_0\parens{\frac{1}{\epsilon} \psi_3^{-1}(x,y+\epsilon u)-\psi_{2,3}^{-1}(x,y)}.
\end{align}
Hence, by taking the limit when $\epsilon$ approaches $0$, it follows that:
\begin{align}
	\partial_{yy}^2 g(x,y) = \parens{\partial_y \psi_{2,3}^{-1}(x,y)}^{\top}\tilde{J}_0\partial_y \psi_{2,3}^{-1}(x,y).
\end{align}
Define $P_0 := \partial_y \psi_{2,3}^{-1}(x_0,y_0)\partial_y \psi_{2,3}^{-1}(x_0,y_0)^{\top}$ which is invertible. Then, we can write:
\begin{align}
	\partial_{yy}^2 g(x,y) &= \partial_y \psi_{2,3}^{-1}(x,y)^{\top}P_0^{-1}P_0 \tilde{J}_0 P_0 P_0^{-1}\partial_y \psi_{2,3}^{-1}(x,y)\\
	   &= \partial_y \psi_{2,3}^{-1}(x,y)^{\top}P_0^{-1}\partial_y \psi_{2,3}^{-1}(x_0,y_0) A_0 \partial_y \psi_{2,3}^{-1}(x_0,y_0)^{\top}P_0^{-1}\partial_y \psi_{2,3}^{-1}(x,y)\\
	   &= P(x,y)^{\top}\partial_{yy}^{2}g(x_0,y_0)P(x,y),
\end{align}
where we defined $P(x,y){=} \partial_y \psi_{2,3}^{-1}(x_0,y_0)^{\top}P_0^{-1}\partial_y \psi_{2,3}^{-1}(x,y)$. The matrix $P(x,y)$ is invertible for any $(x,y)\in \mathcal{V}$ and the map $(x,y)\mapsto P(x,y)$ is continuous. Hence, by considering compact neighborhood of $(x_0,y_0)$ contained in $\mathcal{V}$, we can ensure that the singular values of $P(x,y)$ are contained in an interval $[\sigma_{\min},\sigma_{\max}]$ where $\sigma_{\min}$ and $\sigma_{\max}$ are positive numbers. Further considering the restriction of such map on an open neighborhood $\mathcal{V}'\subset K$ of $(x_0,y_0)$ yields the desired result. 
\end{proof}

\begin{corr}\label{cor:existence_diffeomorphism}
Let $g$ be a real-valued function such that \cref{assumpt:morse-bott} holds. Consider an augmented critical point $(x_0,y_0){\in} \mathcal{M}$,  with $\mathcal{M}$ defined in \cref{eq:augmented_critical_points}.   
Then, there exists a open neighborhoods $\mathcal{B}$ and $\mathcal{V}$ of $x_0$ and $y_0$ in $\X$ and $\Y$ and a continuously differentiable map $\tau$ from $\mathcal{B}\times \mathcal{V}$ to $\mathcal{V}$  such that: 
\begin{itemize}
	\item For any $x\in \mathcal{B}$, the map $\tau_x: y\mapsto \tau(x,y)$ is a diffeomorphism from $\mathcal{V}$ to itself satisfying $\tau_{x_0}(y)=y$ for any $y\in \mathcal{V}$. Moreover, $(x,y)\mapsto \tau_x^{-1}(y)$ is continuous.
		\item For any $(x,y)\in \mathcal{B}\times\mathcal{V}$, the function $g$ satisfies $g(x,y) {=}  g(x_0,\tau(x,y))+C(x)
$,  where $x\mapsto C(x)$ is a function independent of $y$.
	\item There exists positive numbers $\ell$ and $L$ s.t for any $(x,y)\in \mathcal{B}\times\mathcal{V}$:
		\begin{align}\label{eq:non_degeneracy_diffeo_1}
			\ell^2 I \leq  \partial_y\tau(x,y)^{\top}\partial_y\tau(x,y)\leq (L')^2 I.
		\end{align}

\end{itemize}
\end{corr}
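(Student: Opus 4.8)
The plan is to build $\tau$ directly from the parametric Morse--Bott diffeomorphism $\psi$ supplied by \cref{thm:morse-bott_lemma}. Recall that $\psi$ is an $x$-preserving diffeomorphism from a neighborhood $\mathcal{U}$ of $(x_0,0_{\mathcal K},0_{\mathcal K^{\perp}})$ onto a neighborhood of $(x_0,y_0)$, with $\psi(x_0,0,0)=(x_0,y_0)$, and that, writing $\psi^{-1}(x,y)=(x,\psi^{-1}_{2,3}(x,y))$ and $(r,w):=\psi^{-1}_{2,3}(x,y)\in\mathcal K\times\mathcal K^{\perp}$, one has $g(\psi(x,r,w))=g(\psi(x,0,0))+\tfrac12 w^{\top}J_0w$. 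I would then define
\begin{align*}
  \tau(x,y):=\parens{\psi\parens{x_0,\psi^{-1}_{2,3}(x,y)}}_{2},
\end{align*}
i.e.\ re-insert the ``fibre coordinates'' $(r,w)$ of $(x,y)$ at the base parameter $x_0$ and read off the resulting $\Y$-coordinate; equivalently $\tau_x=\psi_{x_0}\circ\psi_x^{-1}$, so in particular $\tau_{x_0}=\mathrm{id}$.

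The heart of the argument is the functional identity, which is immediate from \cref{thm:morse-bott_lemma}. With $(r,w)=\psi^{-1}_{2,3}(x,y)$, the lemma applied at parameter $x$ gives $g(x,y)=g(\psi(x,0,0))+\tfrac12 w^{\top}J_0w$, and applied at parameter $x_0$ gives $g(x_0,\tau(x,y))=g(\psi(x_0,0,0))+\tfrac12 w^{\top}J_0w=g(x_0,y_0)+\tfrac12 w^{\top}J_0w$. Subtracting yields $g(x,y)=g(x_0,\tau(x,y))+C(x)$ with $C(x):=g(\psi(x,0,0))-g(x_0,y_0)$ independent of $y$ (and $C(x_0)=0$).

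Next I would record the regularity and the Jacobian estimates. Since $\psi$ and $\psi^{-1}$ are continuously differentiable and preserve $x$, $\tau$ is $C^1$ on its domain, and differentiating in $y$ gives $\partial_y\tau(x,y)=\partial_{(r,w)}\parens{\psi(x_0,\cdot)}_{2}\big|_{(r,w)=\psi^{-1}_{2,3}(x,y)}\,\partial_y\psi^{-1}_{2,3}(x,y)$. Evaluated at $(x_0,y_0)$, where $(r,w)=(0,0)$, the two factors are the mutually inverse differentials of $\psi_{x_0}$ and $\psi_{x_0}^{-1}$, so $\partial_y\tau(x_0,y_0)=I$. By continuity, restricting to a small compact neighborhood keeps the singular values of $\partial_y\tau(x,y)$ in an interval $[\ell,L']$ with $0<\ell\leq L'$, which gives $\ell^{2}I\leq\partial_y\tau(x,y)^{\top}\partial_y\tau(x,y)\leq (L')^{2}I$; the same invertibility, via the inverse function theorem applied to $(x,y)\mapsto(x,\tau_x(y))$, yields that each $\tau_x$ is a diffeomorphism onto an open neighborhood of $y_0$ and that $(x,y)\mapsto\tau_x^{-1}(y)$ is continuous.

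The step needing genuine care --- and the main, if routine, obstacle --- is the bookkeeping of domains, since the diffeomorphism $\psi$ of \cref{thm:morse-bott_lemma} has an $x$-dependent domain: for $\tau(x,y)$ to be well defined one needs both $\psi^{-1}(x,y)$ and $\psi(x_0,\psi^{-1}_{2,3}(x,y))$ to be defined at once. I would fix a product neighborhood $\mathcal B_1\times\mathcal O_1\times\mathcal O_1^{\perp}\subseteq\mathcal U$ about $(x_0,0,0)$, let $\mathcal V$ be the $\Y$-component of $\psi\parens{\{x_0\}\times\mathcal O_1\times\mathcal O_1^{\perp}}$, and then --- using continuity of $\psi^{-1}$ and $\psi^{-1}(x_0,y_0)=(x_0,0,0)$ --- shrink $\mathcal B$ and $\mathcal V$ so that $\psi^{-1}(\mathcal B\times\mathcal V)\subseteq\mathcal B_1\times\mathcal O_1\times\mathcal O_1^{\perp}$. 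On such a product all the maps above are defined, $\tau$ takes values in $\mathcal V$, and a final shrinking (again via the inverse function theorem applied to $(x,y)\mapsto(x,\tau_x(y))$, whose differential at $(x_0,y_0)$ is block-triangular with identity diagonal blocks) produces the common neighborhood on which $\tau_x$ has the stated diffeomorphism property. Assembling these three parts gives the corollary.
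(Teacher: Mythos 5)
Your construction is exactly the paper's: the paper also sets $\tau_x=\tilde\tau_{x_0}\circ\tilde\tau_x^{-1}$ where $\psi(x,r,w)=(x,\tilde\tau_x(r,w))$, derives the identity $g(x,y)=g(x_0,\tau(x,y))+C(x)$ by applying \cref{thm:morse-bott_lemma} at the parameters $x$ and $x_0$ and subtracting, and obtains \cref{eq:non_degeneracy_diffeo_1} by continuity of $\partial_y\tau$ after shrinking the neighborhoods. Your extra care with the domain bookkeeping (shrinking so that $\psi^{-1}(\mathcal{B}\times\mathcal{V})$ lands in a fixed product neighborhood, where the paper simply asserts $\psi(\mathcal{B}\times\mathcal{O}\times\mathcal{O}^{\perp})=\mathcal{B}\times\mathcal{V}$) is a welcome refinement but does not change the argument.
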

\begin{proof}
	We use the notations of \cref{thm:morse-bott_lemma} where $\mathcal{K}$ is the null subspace of the Hessian $\partial_{yy}^2 g(x_0,y_0)$ and $\mathcal{K}^{\perp}$ its orthogonal complement in $\Y$.  By \cref{thm:morse-bott_lemma} $g$ satisfies: 
	\begin{align}
		g(\psi(x,r,w)) = g(\psi(x,0,0)) + \frac{1}{2}\bar{y}^{\top}J_0\bar{y},
	\end{align}
	with $\psi$ and $J_0$ being the diffeomorphism and matrix defined in \cref{thm:morse-bott_lemma}. Recall that $\psi$ is defined on an open neighborhood $\mathcal{B}\times \mathcal{O}\times \mathcal{O}^{\perp}$ of $(x_0,0,0)\in \mathcal{X}\times \mathcal{K}\times\mathcal{K}^{\perp}$ and whose image by $\psi$ is an open neighborhood $\mathcal{B}\times\mathcal{V}$ of $(x_0,y_0)$.  Hence, we can write:
	$$
	g(\psi(x,r,w)) = C(x) + g(\psi(x_0,r,w)),
	$$
	with $C(x):= g(\psi(x,0,0))-g(\psi(x_0,0,0))$. 	
	We also know that $\psi$ preserves $x$, meaning that $\psi(x,r,w) {=} (x,y)$. Hence, we can define  $(x,r,w)\mapsto \tilde{\tau}_x(r,w)\in \Y$, s.t.  $ \psi(x,r,w) {=} (x,\tilde{\tau}_x(r,w))$. For any $x\in \mathcal{B}$, $(r,w)\mapsto \tilde{\tau}_x(r,w)$ defines a diffeomorphism from $\mathcal{O}\times \mathcal{O}^{\perp}$ onto its image. Moreover, its image must be equal to $\mathcal{V}$. Indeed, since $\psi(\mathcal{B}\times\mathcal{O}\times\mathcal{O}^{\perp})=\mathcal{B}\times \mathcal{V}$, it follows that for any $(x,y)\in \mathcal{B}\times \mathcal{V}$, there exists $(r,w)\in \mathcal{O}\times\mathcal{O}^{\perp}$ such that $\psi(x,r,w) = (x,\tilde{\tau}_x(r,w))= (x,y)$. 
	In particular, if $(x,y)\in \mathcal{B}\times \mathcal{V}$ and $(r,w)= \tilde{\tau}_x^{-1}(y)$,  we can write $\psi(x_0,r,w) {=} (x_0,\tilde{\tau}_{x_0}(r,w)) {=} (x_0,\tilde{\tau}_{x_0}\tilde{\tau}_x^{-1}(y))$. Therefore, the following expression holds for any $(x,y)\in\mathcal{B}\times \mathcal{V}$:
$$
g(x,y)= C(x) + g(x_0,\tau(x,y)),
$$
where we defined $\tau(x,y) {=} \tilde{\tau}_{x_0}\circ \tilde{\tau}_x^{-1}(y)$. For any $x\in \mathcal{B}$, the map $\tau_x: y\mapsto \tau(x,y)$ is a diffeomorphism satisfying $\tau(x_0,y){=}y$.
Moreover,  $(x,r,w)\mapsto \tilde{\tau}_x(r,w)$ and $(x,y)\mapsto \tilde{\tau}_x^{-1}(y)$ are continuously differentiable since $\psi$ is a diffeomorphism. As a result, $\tau$ is continuously differentiable as well and $(x,y)\mapsto\tau_x^{-1}(y)$ is continuously differentiable.  
Finally, since $\partial_y\tau(x,y)$ is jointly continuous in $x$ and $y$ and  $\partial_y\psi_{x}(y)$ is invertible, then, provided that $\mathcal{B}$ and $\mathcal{V}$ are small enough, there must exist two positive numbers $\ell$ and $L'$ such that for any $(x,y)\in \mathcal{B}\times \mathcal{V}$:
		\begin{align}
			\ell^2 I \leq  \partial_y\psi(y)^{\top}\partial_y\psi(y)\leq (L')^2 I.
		\end{align}
\end{proof}

\begin{proof}[Proof of \cref{prop:uniform_KL} ]
	Recall $\mathcal{M} = \braces{(x,y)\in \X\times \Y \middle| \partial_y g(x,y)=0}$ the set of augmented critical points of $g$ and let $(x_0,y_0)$ be in $\mathcal{M}$. 
	First, since \cref{assumpt:morse-bott} holds, we know by \cref{prop:pointwise_morse_bott} that $g(x_0,.)$ is a Morse-Bott function. Hence, by \citep[Theorem 1]{Feehan:2020a}, it follows that $g(x_0,.)$ satisfies a {\L}ojasiewicz inequality near $y_0$. In other words, there exists a neighborhood $\mathcal{V}$ of $y_0$ and a positive constant $\mu'>0$ such that:
	\begin{align}
		\mu'\verts{g(x_0,y)-g(x_0,y_0)}\leq \frac{1}{2}\Verts{\partial_y g(x_0,y)}^2,\qquad \forall y\in \mathcal{V}. 
	\end{align}	
	By \cref{cor:existence_diffeomorphism}, there exists a continuous function $\tau$ defined on an open neighborhood $\mathcal{B}\times\mathcal{V}$ of $(x_0,y_0)$ whose image is $\mathcal{V}$ and for which $g(x,y){=} g(x_0,\tau(x,y)) {+} C(x)$ for any $(x,y)\in \mathcal{B}\times\mathcal{V}$, where $C(x)$ is a  function of $x$ independent of $y$. Moreover, for any $x\in \mathcal{B}$, $y\mapsto \tau(x,y)$ is a diffeomorphism from $\mathcal{V}$ to itself whose inverse is written as $\tau^{-1}(x,y)$ by an abuse of notion. 
	In particular, for $y {=} \tau^{-1}(x,y_0)$ we set $G(x) {:=} g(x,\tau^{-1}(x,y_0)) = g(x_0,y_0) + C(x)$. Note that $\tau^{-1}(x,y_0)$ is critical point of $g(x,.)$ since $\partial_y g(x,\tau^{-1}(x,y_0)) \partial_y \tau^{-1}(x,y_0) {=} \partial_y g(x_0,y_0) = 0 $ and  $\partial_y \tau^{-1}(x,y_0)$ is invertible. Hence, the following holds for any $(x,y)\in \mathcal{B}\times \mathcal{V}$.
	\begin{align}\label{eq:KL_unif}
		\mu'\verts{g(x,y)-G(x)} = \mu'\verts{g(x_0,\tau(x,y)) - g(x_0,y_0)}\leq \frac{1}{2}\Verts{\partial_y g(x_0,\tau(x,y)}^2.
	\end{align}
Moreover, by construction of $\tau$, we know that $\partial_y \tau(x,y)$ satisfies \cref{eq:non_degeneracy_diffeo_1} for any $(x,y)\in \mathcal{B}\times \mathcal{V}$.  Therefore, we deduce that: 
\begin{align}
	\Verts{\partial_y g(x,y)}^2 =\Verts{\partial_y g(x_0,\tau(x,y))\partial_y \tau(x,y)}^2\geq \ell^2 \Verts{\partial_y g(x_0,\tau(x,y))}^2,
\end{align}
Finally, combining the above inequality with \cref{eq:KL_unif},  we get that, for any $(x,y)\in \mathcal{B}\times \mathcal{V}$:
\begin{align}
	\ell^2\mu'\verts{g(x,y)-G(x)}\leq \frac{1}{2}\Verts{\partial_y g(x,y)}^2,\qquad \forall (x,y)\in \mathcal{U}.
\end{align}
The result follows by setting $\mu=\ell^2\mu'>0$ and $\mathcal{U}= \mathcal{B}\times \mathcal{V}$. 
\end{proof}

\section{Asymptotic Properties of Gradient Flows}\label{sec:asymptotic_properties_flow}
\subsection{Convergence of the gradient flow.}
Recall that the gradient flow $\phi_t(x,y)$ satisfies the differential equation
$$
\frac{d\phi_t(x,y)}{dt} = -\partial_y g(x,\phi_t(x,y)), \quad \phi_0(x,y)=y.
$$
The next proposition shows that the gradient flow $\phi_t(x,y)$ converges towards a well-defined selection map $\phi(x,y)$. 
\begin{prop}[\bf Convergence of $\phi_t$.]\label{prop:convergence_flow}
Let $x,y$ be in $\X\times \Y$. Under \cref{assumpt:morse-bott,assumpt:smootness,assumpt:Coercivity}, $ (t,x,y)\mapsto\phi_t(x,y)$ is continuous and for any $(x,y)\in \X\times \Y$, $\phi_t(x,y)$ converges towards a unique critical point $\phi(x,y)$ of $y\mapsto g(x,y)$ as $t$ goes to $+\infty$.
\end{prop}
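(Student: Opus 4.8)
The plan is to prove the statement in four steps --- well-posedness and global existence of the flow, boundedness of every trajectory, convergence of each trajectory to a single critical point via a {\L}ojasiewicz gradient inequality, and joint continuity of $(t,x,y)\mapsto\phi_t(x,y)$. First I would fix $(x,y)\in\X\times\Y$ and apply the Cauchy--Lipschitz (Picard--Lindel\"of) theorem: by \cref{assumpt:smootness} the map $y'\mapsto\partial_y g(x,y')$ is $L$-Lipschitz, so \cref{eq:gradient_flow} with $\phi_0(x,y)=y$ has a unique maximal solution on some $[0,T_{\max})$. Differentiating the energy along the flow gives $\tfrac{d}{dt}g(x,\phi_t(x,y))=-\Verts{\partial_y g(x,\phi_t(x,y))}^2\le 0$, so $t\mapsto g(x,\phi_t(x,y))$ is non-increasing and $\phi_t(x,y)$ stays in the sublevel set $\{y'\in\Y: g(x,y')\le g(x,y)\}$. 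By \cref{assumpt:Coercivity} this sublevel set is bounded, hence compact, so the trajectory remains in a fixed compact set; this forbids finite-time blow-up, so $T_{\max}=+\infty$ and the trajectory is bounded. It also gives $\int_0^{\infty}\Verts{\partial_y g(x,\phi_t(x,y))}^2\,dt=g(x,y)-\lim_{t\to\infty}g(x,\phi_t(x,y))<\infty$, hence $\partial_y g(x,\phi_{t_k}(x,y))\to 0$ along some $t_k\to\infty$.

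Next I would prove convergence of the whole trajectory. Boundedness yields a nonempty $\omega$-limit set; let $y_\infty=\lim_k\phi_{t_k}(x,y)$ with $t_k\to\infty$. Monotonicity and continuity of $g(x,\cdot)$ force $g(x,\phi_t(x,y))\downarrow c:=g(x,y_\infty)$, and passing to the limit in the integrability estimate gives $\partial_y g(x,y_\infty)=0$, so $y_\infty$ is a critical point of $g(x,\cdot)$. By \cref{prop:pointwise_morse_bott} the function $g(x,\cdot)$ is Morse--Bott, so by \cref{prop:uniform_KL} applied at $(x,y_\infty)$ there is a neighborhood $\mathcal{V}$ of $y_\infty$ and $\mu>0$ with $\mu\,\verts{g(x,y')-c}\le\tfrac12\Verts{\partial_y g(x,y')}^2$ for all $y'\in\mathcal{V}$. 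Combining this with $\Verts{\dot\phi_t(x,y)}=\Verts{\partial_y g(x,\phi_t(x,y))}$ gives, whenever $\phi_t(x,y)\in\mathcal{V}$, the differential inequality $\tfrac{d}{dt}\sqrt{g(x,\phi_t(x,y))-c}\le-\sqrt{\mu/2}\,\Verts{\dot\phi_t(x,y)}$, so the trajectory has finite length on any time interval on which it stays in $\mathcal{V}$. A standard trapping argument (see \citep{Lojasiewicz:1982,Attouch:2013,Merlet:2013}) then closes the loop: since $\phi_{t_k}(x,y)\to y_\infty$ and $g(x,\phi_{t_k}(x,y))\to c$, for $k$ large the tail $\{\phi_t(x,y):t\ge t_k\}$ cannot leave a small ball inside $\mathcal{V}$ before its controlled length is exhausted, hence it stays in $\mathcal{V}$ for all $t\ge t_k$, has finite total length, and therefore converges. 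The limit $\phi(x,y)$ equals $y_\infty$ and satisfies $\partial_y g(x,\phi(x,y))=0$ by continuity of $\partial_y g$.

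For joint continuity of $(t,x,y)\mapsto\phi_t(x,y)$ I would invoke continuous dependence on data and parameters for ODEs: the field $(x,y)\mapsto-\partial_y g(x,y)$ is continuous on $\X\times\Y$ and $L$-Lipschitz in $y$ uniformly in $x$, so Gr\"onwall's inequality bounds $\Verts{\phi_t(x,y)-\phi_{t'}(x',y')}$ in terms of $\verts{t-t'}$ and the discrepancy between $(x,y)$ and $(x',y')$ on any finite horizon, giving continuity on $[0,+\infty)\times\X\times\Y$.

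The main obstacle is the convergence step: upgrading the purely local {\L}ojasiewicz inequality near one accumulation point into convergence of the entire, time-unbounded trajectory, i.e.\ ruling out drift among several accumulation points. This is exactly the classical {\L}ojasiewicz trapping mechanism, where boundedness, the monotone energy decay, and \cref{prop:uniform_KL} must be combined; the most economical route is to cite the convergence theorems of \citep{Attouch:2013,Merlet:2013}, stated for gradient-type flows satisfying such an inequality, noting that \cref{prop:uniform_KL} supplies precisely the required hypothesis (with {\L}ojasiewicz exponent $\tfrac12$).
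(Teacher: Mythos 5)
Your proposal is correct and follows essentially the same route as the paper: Cauchy--Lipschitz for existence, uniqueness, and joint continuity; monotone energy decay plus coercivity for boundedness and existence of a critical accumulation point; and the locally uniform {\L}ojasiewicz inequality of \cref{prop:uniform_KL} combined with the trapping/finite-length argument (the paper cites \citep[Theorem 2.7]{Merlet:2013} for exactly this step) to conclude convergence of the whole trajectory.
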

\begin{proof}
First, \cref{assumpt:smootness} ensures that the gradient flow $\phi_t(x,y)$ is uniquely defined at all times $t$ \citep{Daneri:2010}. $\phi_t(x,y)$ is jointly continuous in $(t,x,y)$ by Cauchy-Lipschitz theorem. 
Moreover,  $t\mapsto \phi_t(x,y)$ remains bounded thanks to \cref{assumpt:Coercivity}. Otherwise, there exists a subsequence $\phi_{t_n}(x,y)$  such that $g(x,\phi_{t_n}(x,y))$ diverges to $+\infty$. This contradicts the fact that $g(x,\phi_{t_n}(x,y))$ is decreasing since $\phi_t(x,y)$ is a gradient flow of $g$. Hence, we deduce that $\phi_t(x,y)$ must have at least one accumulation point $y^{\star}$.   Moreover, $y^{\star}$ must be a critical point of $g(x,.)$. To see this, note that $g(x,\phi_t(x,y))$ is a decreasing function in time and is lower-bounded. Hence, it admits a finite limit $l$. Moreover, by differentiating $g(x,\phi_t(x,y))$ is time, it follows that:
\begin{align}
	\frac{\diff}{\diff t}g(x,\phi_t(x,y)) = -\Verts{\partial_y g(x,\phi_t(x,y))}^2
\end{align}
This implies that $ \int_0^{+\infty} \Verts{\partial_y g(x,\phi_s(x,y))}^2  \diff s = g(x,y)- l$ is finite. Since, $g$ is $L$-smooth by \cref{assumpt:smootness}, this is only possible if $\partial_y g(x,\phi_s(x,y))$ converges to $0$. In particular, by continuity of  $\partial_y g(x,y)$, it follows that $\partial_y g(x,y^{\star})=0$. We only need to show that $y^{\star}$ is the unique accumulation point of $\phi_t(x,y)$. To show this, we apply  \cref{prop:uniform_KL}, which implies, in particular, that $g$ satisfies a {\L}ojasiewicz inequality in a neighborhood $\mathcal{V}$ of $y^{\star}$:
\begin{align}
	\mu\verts{g(x,y)-G(x)}\leq \Verts{\partial_y g(x,y)}^2, \forall y\in \mathcal{V}.
\end{align}
We can therefore apply \cite[Theorem 2.7]{Merlet:2013} which ensure that $y^{\star}$ is the unique accumulation point of $\phi_t(x,y)$ and that $\phi_t(x,y)$ converges towards $y^{\star}$. We can therefore defined the map $\phi(x,y) = \lim_{t\rightarrow \infty} \phi_t(x,y)$ which constitues a selection.

\end{proof}

\subsection{Stability of the Gradient Flow Near Local Minima}\label{sec:stability_gradient_flow}

In this section, we provide a general result establishing the stability of gradient flows to perturbations. This result shows that deforming a gradient flow by a family of diffeomorphisms yields trajectories that are not too far from the unperturbed flow. We will use this result later in \cref{sec:continuity_flow_selection} in conjunction with 
the formulation of $y\mapsto g(x,y)$ as a perturbation of $y\mapsto g(x_0,y)$ provided in \cref{cor:existence_diffeomorphism} to prove that the gradient flow $\phi_t(x,y)$ remain stable as the parameter $x$ varies.

\begin{prop}[\bf Stability near local minima]\label{prop:stability_local_minima}
Let $h$ be a real valued differentiable function defined on $\Y$ and $y_0$ be a local minimizer of $h$. We assume that $h$ satisfies the {\L}ojasiewicz inequality near $y_0$, meaning that there exists $\mu>0$ and $R>0$ s.t.:

\begin{align}\label{eq:kl_ineq}
\mu(h(y)-h(y_0))\leq \frac{1}{2}\Verts{\partial_y h(y)}^2,\qquad \forall y\in B(y_0,R). 
\end{align}
Let $\mathcal{V}$ be an open neighborhood of $y_0$, $R'>0$ such that $B(y_0,2R')\subset \mathcal{V}$ and  $\mathcal{P}$ a family of diffeomorphisms defined from $\mathcal{V}$ to itself and satisfying: 
	\begin{enumerate}
		\item For any $\tau {\in} \mathcal{P}$, the pre-image $y_{\psi}{:=} \psi^{-1}(y_0)$ of $y_0$ by $\psi$ belongs to $B(y_0,R')$.
		\item There exists positive numbers $\ell$ and $L'$ s.t. for any $\tau\in \mathcal{P}$ and any $y\in \mathcal{V}$:
		\begin{align}\label{eq:non_degeneracy_diffeo}
			\ell^2 I \leq  \partial_y\tau(y)^{\top}\partial_y\tau(y)\leq (L')^2 I.
		\end{align}
	\end{enumerate}
For some $\tau \in \mathcal{P}$, consider a maximal solution $(z_t)$ of the following ODE:
	\begin{align}\label{eq:ODE_boundedness}
		\dot{z_t} = -\partial_y  h(\tau(z_t))\partial_z \tau(z_t) , \qquad z_0\in B\parens{y_{\tau},R'}. 
	\end{align}

Then, there exists $0<C\leq R'$, such that for any $0< \epsilon \leq C$, there exists $0<\eta\leq \frac{\epsilon}{2}$ with the following property:

For any $\tau\in \mathcal{P}$ and any $z_0$ s.t. $\Verts{z_0-y_{\tau}}\leq \eta$:
\begin{enumerate}
	\item The solution $z_t$ to \cref{eq:ODE_boundedness} is well-defined at all times $t\geq 0$.
	\item For all  $t\geq 0$, it holds that $\Verts{z_t-y_{\tau}}\leq \epsilon $.
\end{enumerate}

\end{prop}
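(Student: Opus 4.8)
The plan is to observe that \cref{eq:ODE_boundedness} is exactly the (Euclidean) gradient flow $\dot z_t = -\partial_z\!\big(h\circ\tau\big)(z_t)$ of the composed function $G_\tau := h\circ\tau$, and then run the classical {\L}ojasiewicz arc-length argument for such flows, being careful that every constant is uniform over $\tau\in\mathcal P$. First I would record a {\L}ojasiewicz inequality for $G_\tau$ near its minimizer $y_\tau:=\tau^{-1}(y_0)$: one has $G_\tau(y_\tau)=h(y_0)$, and (shrinking $R$ using that $y_0$ is a local minimizer) we may assume $h(y)\ge h(y_0)$ on $B(y_0,R)$. Since $\partial_z G_\tau(z)=\partial_y h(\tau(z))\,\partial_z\tau(z)$ with $\partial_z\tau(z)$ invertible and $\partial_z\tau(z)\,\partial_z\tau(z)^{\top}\succeq \ell^2 I$ (it is similar to $\partial_z\tau(z)^\top\partial_z\tau(z)\succeq\ell^2 I$ by \cref{eq:non_degeneracy_diffeo}), combining with \cref{eq:kl_ineq} evaluated at $\tau(z)$ gives, for every $z$ with $\tau(z)\in B(y_0,R)$,
\[
\|\partial_z G_\tau(z)\|^2\ \ge\ \ell^2\,\|\partial_y h(\tau(z))\|^2\ \ge\ 2\ell^2\mu\,\big(G_\tau(z)-h(y_0)\big),
\]
i.e. a {\L}ojasiewicz inequality for $G_\tau$ with exponent $1/2$ and uniform constant $\ell^2\mu$.

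\textbf{Arc-length estimate.} Writing $W_\tau(z):=G_\tau(z)-h(y_0)\ge 0$ on the region $\{\tau(z)\in B(y_0,R)\}$, along the flow $\tfrac{d}{dt}W_\tau(z_t)=-\|\partial_z G_\tau(z_t)\|^2\le 0$ and, whenever $W_\tau(z_t)>0$,
\[
\frac{d}{dt}\sqrt{W_\tau(z_t)}\ =\ \frac{-\|\partial_z G_\tau(z_t)\|^2}{2\sqrt{W_\tau(z_t)}}\ \le\ -\,c\,\|\partial_z G_\tau(z_t)\|\ =\ -\,c\,\|\dot z_t\|,\qquad c:=\ell\sqrt{\mu/2},
\]
where the inequality uses $\sqrt{W_\tau(z_t)}\le \|\partial_z G_\tau(z_t)\|/(\ell\sqrt{2\mu})$ from the first step (the stationary case $W_\tau(z_t)=0$ is trivial, or handled by a $\sqrt{W_\tau+\delta}$ regularization). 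Integrating over any time interval on which the trajectory stays in $\{\tau(z_t)\in B(y_0,R)\}$ yields $\int_0^T\|\dot z_t\|\,dt\le \tfrac1c\sqrt{W_\tau(z_0)}$, hence $\|z_T-y_\tau\|\le \|z_0-y_\tau\|+\tfrac1c\sqrt{W_\tau(z_0)}$.

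\textbf{Keeping the flow in the good region, and bootstrap.} Using the upper bound in \cref{eq:non_degeneracy_diffeo} and convexity of $B(y_0,2R')\subset\mathcal V$, the map $\tau$ is $L'$-Lipschitz there, so $\|z-y_\tau\|\le\epsilon\le R'$ gives $\|\tau(z)-y_0\|=\|\tau(z)-\tau(y_\tau)\|\le L'\epsilon$; hence with $C:=\min\{R'/2,\ R/L'\}$, whenever $\|z-y_\tau\|\le\epsilon\le C$ one has $z\in\bar B(y_0,3R'/2)\subset\mathcal V$ and $\tau(z)\in B(y_0,R)$, so both the ODE and the {\L}ojasiewicz inequality of the first step are in force. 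Also $W_\tau(z_0)=h(\tau(z_0))-h(y_0)\le\omega\big(L'\|z_0-y_\tau\|\big)$ with $\omega(\delta):=\sup_{\|w-y_0\|\le\delta}|h(w)-h(y_0)|\to 0$ as $\delta\to 0$, by continuity of $h$ at $y_0$. Now, given $\epsilon\in(0,C]$, choose $\eta\in(0,\epsilon/2]$ small enough that $\eta+\tfrac1c\sqrt{\omega(L'\eta)}\le\epsilon/2$. Fix $\tau\in\mathcal P$, $z_0$ with $\|z_0-y_\tau\|\le\eta$, let $[0,T_{\max})$ be the maximal existence interval, and set $T^\star:=\sup\{t<T_{\max}:\ \|z_s-y_\tau\|\le\epsilon\ \forall s\le t\}$. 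On $[0,T^\star]$ the estimate applies and gives $\|z_t-y_\tau\|\le \eta+\tfrac1c\sqrt{W_\tau(z_0)}\le\epsilon/2$; if $T^\star<T_{\max}$ then by continuity $\|z_{T^\star}-y_\tau\|=\epsilon$, contradicting $\le\epsilon/2$, so $T^\star=T_{\max}$. The solution thus stays in the compact set $\bar B(y_\tau,\epsilon)\subset\mathcal V$, and a standard ODE continuation argument forces $T_{\max}=+\infty$; hence $z_t$ is defined for all $t\ge 0$ with $\|z_t-y_\tau\|\le\epsilon/2\le\epsilon$, which is the claim.

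\textbf{Main obstacle.} The analytic core — the {\L}ojasiewicz arc-length bound — is classical; I expect the main difficulty to be the bookkeeping of the last step: guaranteeing that the deformed flow never exits the ball on which both \cref{eq:kl_ineq} and the two-sided bounds on $\partial_z\tau$ are valid, and that the thresholds $C$ and $\eta$ depend only on $\mu,\ell,L',R,R'$ and not on the particular $\tau\in\mathcal P$. This is exactly what the uniform hypotheses (the common constants $\ell,L'$, the uniform location $y_\tau\in B(y_0,R')$) are designed to supply, so the continuation/bootstrap argument must be phrased to use only these uniform quantities.
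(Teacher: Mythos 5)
Your proposal is correct and follows essentially the same route as the paper's proof: a uniform {\L}ojasiewicz inequality for the composed function $h\circ\tau$ obtained from \cref{eq:kl_ineq} and the two-sided bound \cref{eq:non_degeneracy_diffeo}, the arc-length estimate from differentiating $\sqrt{h(\tau(z_t))-h(y_0)}$ along the flow, the $L'$-Lipschitz bound on $\tau$ to transfer between balls around $y_\tau$ and $y_0$, and a continuation/bootstrap argument with constants depending only on $\mu,\ell,L',R,R'$. The only place where the paper is more explicit is the degenerate case $h(\tau(z_t))=h(y_0)$ (it shows the trajectory becomes stationary there, using that $y_0$ is a local minimizer), which you dismiss as trivial but which is indeed handled by the same observation.
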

\begin{proof}
	The proof is inspired from the the abstract stability result in \cite{Li:2019d}.  We know that $y_0$ is a local minimizer of $h$, therefore there exists $R">0$ such that for any $y$ satisfying $\Verts{y-y_0}\leq R"$, it holds that $\mathcal{L}(y) := h(y)-h(y_0)\geq 0$. Moreover, by \cref{eq:kl_ineq}, we also have that:
\begin{align}\label{eq:KL_stability}
			2\mu \mathcal{L}(y)\leq \Verts{\partial_y \mathcal{L}(y)}^2,\qquad \forall y\in B(y_0,R).
\end{align}
Take $\epsilon < \frac{1}{L'}\min(R,R",L'R'):= C$. To simplify subsequent calculations, we will choose $y$ close enough to $y_0$ so that $2\ell^{-1}\sqrt{\frac{2}{\mu}}\mathcal{L}(y)^{\frac{1}{2}}\leq \epsilon$, where $\mu$ is the positive constant appearing in \cref{eq:KL_stability} and $\ell$ is the positive constant in \cref{eq:non_degeneracy_diffeo}. This is possible by continuity of $\mathcal{L}$ which that there exists $0<\eta\leq \frac{\epsilon}{2}$ for which any $y\in B(y_0,L'\eta)$ satisfies: 
\begin{align}\label{eq:small_L}
	\mathcal{L}(y)^{\frac{1}{2}}\leq \frac{1}{2}\ell\sqrt{\frac{\mu}{2}}\epsilon.
\end{align}
Consider now $\tau\in \mathcal{P}$. Equation \cref{eq:non_degeneracy_diffeo} implies that $\tau$ is $L'$-Lipschitz on $B(y_0,2R')$. Moreover, for any $z$ in  $B(y_{\tau},\eta)$, it holds that $z\in B(y_0,2R')$ since $\eta\leq R'$ and $y_{\tau}\in B(y_0,R')$ by definition of $y_\tau$. Therefore, we can write the following inequality: 
\begin{align}
	\Verts{\tau(z)-y_0} = \Verts{\tau(z)-\tau(y_{\tau})}\leq L' \Verts{z-y_{\tau}}\leq L'\eta,  
\end{align}
We have shown that $\tau(z)\in B(y_0,L'\eta)$ for any $z\in B(y_{\tau},\eta)$, so that \cref{eq:small_L} holds for $\tau(z)$:
\begin{align}
		  \mathcal{L}(\tau(z))^{\frac{1}{2}}\leq \frac{1}{2}\ell\sqrt{\frac{\mu}{2}}\epsilon,
		\qquad \forall z\in B(y_{\tau},\eta),
\end{align}
Additionally, by \cref{eq:KL_stability} and using that $\ell^2\Verts{\partial_y \mathcal{L}(\tau(z))}^2\leq  \Verts{\nabla \mathcal{L}\circ \tau(z)}^2$ by \cref{eq:non_degeneracy_diffeo}, it holds for any $\epsilon <C$ that:  
\begin{align}\label{eq:KL_stability}
	0 \leq 2\mu \mathcal{L}(\tau(z))\leq \Verts{\partial_y \mathcal{L}(\tau(z))}^2\leq \ell^{-2} \Verts{\nabla \mathcal{L}\circ \tau(z)}^2, \qquad \forall z\in B(y_{\tau},\epsilon),
\end{align}
From now on, we fix $\tau$, and consider  $z_t$ to the ODE \cref{eq:ODE_boundedness} with initial condition $z_0\in B(y_{\tau},\eta)$.  Define $\mathcal{T} = \{ t\in \R_{+}|. \Verts{z_s-y_{\tau}} < C\quad  \forall s\in [0,t) \}$ which is not empty by construction since $\Verts{z_0-y_{\tau}}< C$ $s\mapsto z_s$ is continuous. Hence, $t_1 := \sup \mathcal{T}$ is positive. We will show that $t_1=+\infty$. We will also consider the time until which  $\mathcal{L}(\tau(z_t))$ remains positive:  $t^{+} := \sup \{ t\in \R_{+}| \mathcal{L}(\tau(z_s))>0 \forall s\in [0,t) \} $. 
We may assume that $\mathcal{L}(\tau(z_0))>0$ so that $t^{+}>0$ by continuity of the solution $z_t$. The case where $\mathcal{L}(\tau(z_0))=0$ will be treated separately. Denote by $t_1^+ := \min(t_1,t^{+})$ so that, for any $t\in [0,t_1^+)$ the following holds:
\begin{align}
-\frac{\diff \mathcal{L}(\tau(z_t))^{\frac{1}{2}} }{\diff  t} =& \frac{1}{2}\mathcal{L}(\tau(z_t))^{-\frac{1}{2}} \Verts{\nabla \mathcal{L}\circ\tau(z_t)}^2\geq  \ell\sqrt{\frac{\mu}{2}}\Verts{\nabla  \mathcal{L}\circ(\tau(z_t))},
\end{align}
where the first equality follows by differentiating $z_t$ in time and using the ODE equation \cref{eq:ODE_boundedness},  while the last inequality uses the inequality  \cref{eq:KL_stability} which holds since $\Verts{z_t-y_{\tau}} < C$. Integrating between $0$ and $t\in [0,t_1^+)$, we get:
\begin{align}
	\mathcal{L}(\tau(z_0))^{\frac{1}{2}} -\mathcal{L}(\tau(z_t))^{\frac{1}{2}}\geq \ell\sqrt{\frac{\mu}{2}}\int_0^t\Verts{\nabla  \mathcal{L}\circ\tau(z_s)}\diff s. 
\end{align}
Since $\Verts{z_0-y_{\tau}}\leq \eta$ and using \cref{eq:small_L}, it holds that $\mathcal{L}(z_0)^{\frac{1}{2}}\leq \frac{\ell}{2}\sqrt{\frac{\mu}{2}}\epsilon$. We can therefore deduce that $\int_0^t\Verts{\nabla  \mathcal{L}\circ\tau(z_s)}\diff s\leq \frac{\epsilon}{2}$. This allows to write for all $t\in [0,t_1^+)$
\begin{align}\label{eq:main_ineq_stability}
	\Verts{z_t-y_{\tau}} \leq & \Verts{z_t-z_0} + \Verts{z_0-y_{\tau}},\\ 
	\leq & \int_0^t \Verts{\nabla  \mathcal{L}\circ\tau(z_s)}\diff s + \eta \leq \epsilon.
\end{align}

We distinguish two cases depending on whether $t^+<t_1$ or $t_1\leq t^+$.

{\bf Case 1: $t^+<t_1$ or}. 
In this case we have $t_1^+=t^+ < +\infty$. This case also accounts for when $\mathcal{L}\circ\tau(z_0){=}0$ which implies that $t^+ = 0<t_1$. If $t^+ {=} 0$, then $\Verts{z_{t^+}-y_{\tau}}\leq \epsilon$ by construction. Otherwise, we still have that $\Verts{z_{t^+}-y_{\tau}}\leq \epsilon$ by \cref{eq:main_ineq_stability} and the continuity of $z_t$ at $t^+$. Moreover, by definition of $t^+$, it must also hold that $\mathcal{L}\circ\tau(z_{t^+})=0$. We only need to show that $\nabla   \mathcal{\mathcal{L}}\circ\tau(z_{t^+})=0$. By contradiction, if $\nabla   \mathcal{\mathcal{L}}\circ\tau(z_{t^+})\neq 0$, then we would have $\mathcal{L}\circ\tau(z_{t^+ + s})<0$ for $s>0$ small enough. However, since $t^+< t_1$, then $t^+ +s<t_1$ for $s$ small enough, so that $\Verts{z_{t^+ +s}-y_{\tau}}< C$. The latter means that $\mathcal{L}\circ\tau(z_{t^+ +s})\geq 0$ since $y_0$ is a local minimizer of $\mathcal{L}$. This contradicts $\mathcal{L}\circ(z_{t^+ + s})<0$. Therefore $\nabla \mathcal{\mathcal{L}}\circ\tau(z_{t^+}){=} 0$  which implies that $\tau(z_{t^+})$ is a critical point of $y\mapsto\mathcal{L}(y)$ so that $z_{t}= z_{t^+}$ for any $t\geq t^{+}$. This directly means that $\Verts{z_t-y_{\tau}}\leq \epsilon$ for any $t\geq 0$, hence $t_1=+\infty$.

{\bf Case 2: $t^+\geq t_1$.} In this case, $t_1^+=t_1$. If by contradiction we had $t_1<+\infty$, then we would directly get $\Verts{z_{t_1}-y_{\tau}}\leq \epsilon$ by continuity of $t$ at $t_1$ and maximality of the solution $z_t$. However, by defintion of $t_1$, we also have $\Verts{z_{t_1}-y_{\tau}}=C$. This contradicts the condition $\epsilon < C$ and therefore  means that $t_1=+\infty$. Hence,  it holds that $\Verts{z_t-y_{\tau}}\leq \epsilon$ for any $t\geq 0$ and that the solution $z_t$ is well-defined at all times.

\end{proof}

\subsection{Continuity of the Flow Selection}\label{sec:continuity_flow_selection} 

\cref{prop:continuity_at_point}
shows that $x\mapsto\phi(x,y)$ is continuous at $x_0$ whenever $\phi(x_0,y)$ is a local minimum of $g(x_0,.)$. 
\cref{prop:stability_of_local_minima}
shows that, near $x_0$, $\phi(x,y)$ are local minima as well provided $\phi(x_0,y)$ is a local minimum of $g(x_0,.)$. 
\begin{prop}[\bf Continuity near local minima]\label{prop:continuity_at_point}
Let $x_0\in \X$ and $y\in \Y$. 
Let $g$ be such that \cref{assumpt:Coercivity,assumpt:morse-bott,assumpt:smootness} hold. Assume that $y_0 = \phi(x_0,y)$ is a local minimizer of $y\mapsto g(x_0,y)$. 
Then, for any $\epsilon>0$ small enough, there exists $T>0$ and $\eta>0$, s.t.:
\begin{align}
	\Verts{\phi_t(x,y)-\phi(x_0,y)}\leq \epsilon, \qquad \forall t\geq T, \quad \forall x\in B(x_0,\eta).
\end{align}
In particular, $x\mapsto \phi(x,y)$ is continuous at $\phi(x_0,y)$.
\end{prop}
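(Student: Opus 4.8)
The plan is to localize the flow near the augmented critical point $(x_0,y_0)$ with $y_0:=\phi(x_0,y)$, use the normal-form representation of \cref{cor:existence_diffeomorphism} to rewrite the gradient flow of $g(x,\cdot)$ as a diffeomorphism-deformed flow of the \emph{fixed} function $g(x_0,\cdot)$, apply the abstract confinement estimate of \cref{prop:stability_local_minima} to the family of deformations $\{\tau_x\}$, and finally glue this to the full trajectory through a restart argument exploiting the joint continuity of $\phi_t$ from \cref{prop:convergence_flow}.

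Concretely, first I would observe that $y_0=\phi(x_0,y)$ is a critical point of $g(x_0,\cdot)$ (by \cref{prop:convergence_flow}), so $(x_0,y_0)\in\mathcal{M}$; by \cref{prop:pointwise_morse_bott} the function $g(x_0,\cdot)$ is Morse-Bott, and since $y_0$ is a local minimizer it satisfies a {\L}ojasiewicz inequality $\mu\,(g(x_0,y)-g(x_0,y_0))\le\frac12\Verts{\partial_y g(x_0,y)}^2$ on a ball $B(y_0,R)$ (e.g.\ restricting \cref{prop:uniform_KL} to $x=x_0$). Applying \cref{cor:existence_diffeomorphism} at $(x_0,y_0)$ yields open neighborhoods $\mathcal{B}\ni x_0$ and $\mathcal{V}\ni y_0$ and a $C^1$ map $\tau$ with $g(x,y)=g(x_0,\tau(x,y))+C(x)$, $\tau_{x_0}=\mathrm{id}$, $(x,y)\mapsto\tau_x^{-1}(y)$ continuous, and the uniform non-degeneracy bound $\ell^2 I\preceq\partial_y\tau(x,y)^{\top}\partial_y\tau(x,y)\preceq (L')^2 I$ of \cref{eq:non_degeneracy_diffeo_1}; after shrinking I may assume $B(y_0,2R')\subset\mathcal{V}$ for some $R'>0$. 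Since $g(x,\cdot)$ and $g(x_0,\tau_x(\cdot))$ differ only by the $x$-dependent constant $C(x)$, the gradient flow of $g(x,\cdot)$ solves exactly the ODE of \cref{prop:stability_local_minima} with $h=g(x_0,\cdot)$ and diffeomorphism $\tau_x$; moreover $\tau_x^{-1}(y_0)\to\tau_{x_0}^{-1}(y_0)=y_0$ as $x\to x_0$, so on a small enough neighborhood of $x_0$ the family $\mathcal{P}:=\{\tau_x\}$ meets all the hypotheses of that proposition. This produces a constant $C_0>0$ such that for every $\epsilon'\le C_0$ there is $\eta'\in(0,\epsilon'/2]$ with the property that, for $x$ near $x_0$, $\Verts{z_0-\tau_x^{-1}(y_0)}\le\eta'$ implies that the flow of $g(x,\cdot)$ issued from $z_0$ stays in $B(\tau_x^{-1}(y_0),\epsilon')$ for all times.

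Given the target $\epsilon>0$ (small), I would then set $\epsilon'=\min(\epsilon/2,C_0)$, take the corresponding $\eta'$, and choose $T>0$ with $\Verts{\phi_T(x_0,y)-y_0}\le\eta'/3$, using $\phi_t(x_0,y)\to y_0$. By joint continuity of $(t,x,y)\mapsto\phi_t(x,y)$ at the fixed time $t=T$ and continuity of $x\mapsto\tau_x^{-1}(y_0)$, there is $\eta>0$ with $B(x_0,\eta)\subset\mathcal{B}$ such that, for all $x\in B(x_0,\eta)$, both $\Verts{\phi_T(x,y)-\phi_T(x_0,y)}\le\eta'/3$ and $\Verts{\tau_x^{-1}(y_0)-y_0}\le\eta'/3$, and such that the hypotheses of \cref{prop:stability_local_minima} hold for $\mathcal{P}=\{\tau_x:x\in B(x_0,\eta)\}$. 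Then $z_0:=\phi_T(x,y)$ satisfies $\Verts{z_0-\tau_x^{-1}(y_0)}\le\eta'$, so by the confinement estimate together with the flow (semigroup) identity $\phi_{T+s}(x,y)=\phi_s(x,z_0)$ we obtain $\Verts{\phi_t(x,y)-\tau_x^{-1}(y_0)}\le\epsilon'$ for all $t\ge T$, hence $\Verts{\phi_t(x,y)-y_0}\le\epsilon'+\eta'/3\le\epsilon$. Letting $t\to+\infty$ finally gives $\Verts{\phi(x,y)-\phi(x_0,y)}\le\epsilon$ on $B(x_0,\eta)$, i.e.\ the continuity of $x\mapsto\phi(x,y)$ at $x_0$.

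I expect the main obstacle to be organizational rather than analytical: verifying, uniformly in $x$ near $x_0$, that $\{\tau_x\}$ satisfies every hypothesis of \cref{prop:stability_local_minima} — in particular that the preimages $\tau_x^{-1}(y_0)$ remain in $B(y_0,R')$ and that the confinement balls $B(\tau_x^{-1}(y_0),\epsilon')$ stay inside the chart $\mathcal{V}$ in which the normal form is valid — together with orchestrating the three-term triangle inequality for $\Verts{z_0-\tau_x^{-1}(y_0)}$ so that the single parameter $\eta'$ delivered by the stability proposition simultaneously controls the restart error and the residual $\Verts{\tau_x^{-1}(y_0)-y_0}$. All the genuine work — the {\L}ojasiewicz length estimate that confines the trajectory — is already done inside \cref{prop:stability_local_minima}, so no new estimate is needed beyond assembling the cited results.
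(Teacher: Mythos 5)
Your proposal follows essentially the same route as the paper's proof: localize via \cref{cor:existence_diffeomorphism} to write $g(x,\cdot)$ as a $\tau_x$-deformation of $g(x_0,\cdot)$ up to the constant $C(x)$, apply the confinement estimate of \cref{prop:stability_local_minima} to the family $\{\tau_x\}$, and glue with a restart at time $T$ using joint continuity of $\phi_T$, the three-term triangle inequality against $\tau_x^{-1}(y_0)$, and ODE uniqueness to identify the restarted trajectory with $\phi_{T+t}(x,y)$. The argument is correct and matches the paper's in all essential steps.
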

\begin{proof}
We will apply \cref{prop:stability_local_minima} to the function $h(y) = g(x_0,y)$ and the well-chosen family $\mathcal{P}$ of local diffeomorphisms on $\Y$. By application of \cref{cor:existence_diffeomorphism}, 
there exists a open neighborhoods $\mathcal{B}$ and $\mathcal{V}$ of $x_0$ and $y_0$ in $\X$ and $\Y$ and a continuously differentiable map $\tau$ from $\mathcal{B}\times \mathcal{V}$ to $\mathcal{V}$ such that $y\mapsto \tau(x,y)$ is a diffeomorphism from $\mathcal{V}$ onto itself and for which $g$ satisfies for any $(x,y)\in \mathcal{B}\times \mathcal{V}$:
\begin{align}
	g(x,y) = g(x_0,\tau(x,y)) + C(x).
\end{align}
For simplicity, we write $\tau_x: y\mapsto \tau(x,y)$ by an abuse of notations. 
We know, by \cref{cor:existence_diffeomorphism},  that $x\mapsto \tau_x^{-1}(y_0)$ is continuous and converges to $\tau_{x_0}^{-1}(y_0)=y_0$. Hence, by restricting $x$ to a smaller neighborhood $\mathcal{B}'\subset \mathcal{B}$, we can ensure that $\tau_x^{-1}(y_0)$ belongs to $B(y_0,R')$ with $R'$ small enough so that $B(y_0,2R')\subset \mathcal{V}$. Consider now the family of diffeomorphisms $\mathcal{P}$
\begin{align}
	\mathcal{P} = \braces{ \mathcal{V} \ni y\mapsto \tau(x,y)\in \mathcal{V}  \middle| x\in \mathcal{B}'}.
\end{align}
We have constructed $\mathcal{P}$ satisfying the conditions of \cref{prop:stability_local_minima}. Moreover, by \cref{prop:uniform_KL}, the function $h(y){:=} g(x_0,y) $ satisfies 
a {\L}ojasiewicz inequality in an open neighborhood $\mathcal{V}'$ of $y_0$:
\begin{align}
	\mu\verts{h(y)-G(x_0)}\leq \Verts{\partial_y h(y)}^2, \forall y\in \mathcal{V}'.
\end{align}
We can always choose the neighborhood $\mathcal{V}'$ to be an open ball $B(y_0,R)$ of radius $R>0$ centered in $y_0$.
Therefore, we have shown so far that $h$ and $\mathcal{P}$ satisfy the conditions of \cref{prop:stability_local_minima}. 

For any $\tau\in \mathcal{P}$, consider the ODE:
\begin{align}\label{eq:ode_z_t}
	z_t = -\partial_y g(x,\tau(z_t)),\qquad z_0\in B(y_0,R').
\end{align}
Following the notation in \cref{prop:stability_local_minima}, we define $y_{\tau} :=\tau^{-1}(y_0)$ for any $\tau \in \mathcal{P}$. We apply \cref{prop:stability_local_minima} which ensures stability of $z_t$. More precisely, there exists a positive constant $C$ smaller than $R'$ so that  for any $0<\epsilon<C$, the solution $z_t$ is well-defined at all times and satisfies $\Verts{z_t-y_{\tau}}\leq \epsilon$ for any $t\geq 0$, provided that the initial condition $z_0$ satisfies $\Verts{z_0-y_{\tau}}\leq \eta$ for some positive $\eta<\frac{\epsilon}{2}$ that is independent of the choice of $\tau$ is $\mathcal{P}$:
\begin{align}\label{eq:main_stability_ineq}
\forall \tau\in \mathcal{P}: \Verts{z_0-y_{\tau}}\leq \eta\implies \Verts{z_t-y_{\tau}}\leq \epsilon.
\end{align}
We will apply this result to a particular choice for $z_0$. From now on, we fix $0<\epsilon<C$ and let $0<\eta\leq \frac{\epsilon}{2}$ be as in \cref{prop:stability_local_minima}.  Using  \cref{prop:convergence_flow}, we know that $\phi_t(x_0,y)$ converges to $y_0=\phi(x_0,y)$, hence there exits $T>0$ s.t. $\Verts{\phi_T(x_0,y)-y_0}\leq \frac{\eta}{3}$. 
Moreover, since the maps $x\mapsto \phi_T(x,y)$ and $x\mapsto y_{\tau_x}$ are continuous at $x_0$ with $y_{\tau_{x_0}} {=} y_0$, there exits $\eta'$ satisfying $0<\eta'$   such that $B(x_0,\eta')\subset \mathcal{B}'$ and $\Verts{\phi_T(x,y)- \phi_T(x_0,y)}\leq \frac{\eta}{3}$ and $\Verts{y_{\tau_x}-y_0}\leq \frac{\eta}{3}$ for any $x\in B(x_0,\eta')$. Therefore:
\begin{align}
	\Verts{\phi_T(x,y)- y_{\tau_x}}\leq 
		\Verts{\phi_T(x,y)- \phi_T(x_0,y)} + \Verts{\phi_T(x_0,y)- y_0} + \Verts{y_0- y_{\tau_x}}\leq \eta. 
\end{align}
For any $x\in B(x_0,\eta')$, by choosing $z_0 = \phi_T(x,y)$, we have that $\Verts{z_0-y_{\tau_x}}\leq \eta$. 
Therefore, we deduce by \cref{eq:main_stability_ineq} that $\Verts{z_t-y_{\tau_x}}\leq \epsilon$ and subsequently that 
\begin{align}
	\Verts{z_t-y_0}\leq \Verts{z_t-y_{\tau_x}}+\Verts{y_{\tau_x}-y_0}\leq \epsilon +\frac{\eta}{3}\leq \frac{7}{6}\epsilon,
\end{align} 
since we imposed that $\eta<\frac{\epsilon}{2}$. Recall now that $z_t$ satisfies the ODE:
\begin{align}
	\dot{z}_t = -\partial_y g(x_0,\tau_x(z_t))\partial_z \tau_x(z_t).
\end{align}
By definition of $\tau_x$, we have $g(x,y) = g(x_0,\tau_x(y))$ for any $y\in B(y_0,2R')\subset \mathcal{V}$. In particular, as we have shown that $\Verts{z_t-y_0}\leq \frac{7}{6}\epsilon <2R'$, it follows  that $z_t$ satisfies the ODE:
\begin{align}
	\dot{z}_t = -\partial_y g(x,z_t) = -\partial_y g(x_0,\tau_x(z_t))\partial_z \tau_x(z_t).
\end{align}
By Cauchy-Lipschtz theorem, the solution of the above ODE is unique. Moreover, since we know that $\phi_{T+t}(x,y)$ is a solution to the above ODE, then we deduce that $z_t = \phi_{T+t}(x,y)$. We have shown that for any $\epsilon<C$, there exists $T>0$ and $\eta'$ such that:
\begin{align}\label{eq:stability_1}
	\Verts{\phi_t(x,y)-y_0}\leq \frac{7}{6}\epsilon, \qquad \forall t\geq T, \quad\forall x\in B(x_0,\eta').
\end{align}
Since $\phi_t(x,y)$ converges towards $\phi(x,y)$ by \cref{prop:convergence_flow}, taking the limit $t{\rightarrow} \infty$ in \cref{eq:stability_1}, we obtain:
\begin{align}
	\Verts{\phi(x,y)-y_0}\leq \frac{7}{6}\epsilon, \forall x\in B(x_0,\eta').
\end{align}
The above inequality imply in particular that $x\mapsto\phi(x,y)$ is continuous at $x_0$.
\end{proof}

\begin{prop}[\bf Stability of local minimizers]\label{prop:stability_of_local_minima}
	Let $x_0\in \X$ and $y\in \Y$ and $g$ be such that \cref{assumpt:Coercivity,assumpt:morse-bott,assumpt:smootness} hold. Assume that $y_0 = \phi(x_0,y)$ is a local minimizer of $y\mapsto g(x_0,y)$. Then, for any $x_1$ in a neighborhood of $x_0$, $\phi(x_1,y)$ is a local minimizer of $g(x_1,.)$.
\end{prop}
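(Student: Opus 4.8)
The plan is to transfer the local minimality of $y_0 := \phi(x_0,y)$ to the nearby critical points $y_1 := \phi(x_1,y)$ through the local change of variables of \cref{cor:existence_diffeomorphism}, and to use the Morse--Bott structure of $g(x_0,\cdot)$ to certify that the corresponding perturbed critical points remain local minimizers. Diffeomorphism invariance of the notion of local minimizer then closes the loop.

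First I would record the elementary facts. By \cref{prop:well_defined_limit} the flow selection is well defined and satisfies the criticality property of \cref{def:selection}, so $y_1$ is a critical point of $g(x_1,\cdot)$ for every $x_1$; and by \cref{prop:continuity_at_point} the map $x\mapsto\phi(x,y)$ is continuous at $x_0$, so $y_1\to y_0$ as $x_1\to x_0$. Since $(x_0,y_0)\in\mathcal{M}$, \cref{cor:existence_diffeomorphism} provides open neighborhoods $\mathcal{B}\ni x_0$ and $\mathcal{V}\ni y_0$, a $C^1$ map $\tau:\mathcal{B}\times\mathcal{V}\to\mathcal{V}$ with $\tau_{x_0}=\mathrm{id}$, each $\tau_x:=\tau(x,\cdot)$ a diffeomorphism of $\mathcal{V}$ with $\partial_y\tau(x,y)$ invertible by \cref{eq:non_degeneracy_diffeo_1}, and a function $C$ of $x$ alone such that $g(x,y')=g(x_0,\tau_x(y'))+C(x)$ on $\mathcal{B}\times\mathcal{V}$; after shrinking $\mathcal{V}$ I may further assume that $\mathcal{V}$ is contained in the Morse--Bott chart neighborhood of $y_0$ for $g(x_0,\cdot)$ given below.

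Next I would shrink the neighborhood of $x_0$: by continuity of $\phi$ and of $\tau$, for $x_1$ close enough to $x_0$ we have $x_1\in\mathcal{B}$, $y_1\in\mathcal{V}$, and $\tau_{x_1}(y_1)\in\mathcal{V}$. Differentiating $g(x_1,\cdot)=g(x_0,\tau_{x_1}(\cdot))+C(x_1)$ at $y_1$ and using $\partial_y g(x_1,y_1)=0$ together with the invertibility of $\partial_y\tau(x_1,y_1)$ yields $\partial_y g(x_0,\tau_{x_1}(y_1))=0$, so $\tau_{x_1}(y_1)$ is a critical point of $g(x_0,\cdot)$ lying in $\mathcal{V}$. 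Now comes the key step. By \cref{prop:pointwise_morse_bott}, $g(x_0,\cdot)$ is a Morse--Bott function; since $y_0$ is a local minimizer, its Hessian $\partial^2_{yy}g(x_0,y_0)$ is positive semidefinite, hence the diagonal matrix $J_0$ of the signs of its nonzero eigenvalues is the identity. The Morse--Bott lemma (\cref{thm:morse-bott_lemma} in the parameter-free case, i.e.\ \citep[Theorem 2.10]{Feehan:2020a}) then gives coordinates $(r,w)$ near $y_0$ in which $g(x_0,\cdot)=g(x_0,y_0)+\tfrac12\|w\|^2$, so that in this chart every critical point of $g(x_0,\cdot)$ has the common value $g(x_0,y_0)$ and is a local (indeed, chart-wise global) minimizer. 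Consequently $\tau_{x_1}(y_1)$ is a local minimizer of $g(x_0,\cdot)$.

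Finally I would pull this back. On a neighborhood $\mathcal{W}\subseteq\mathcal{V}$ of $y_1$ we have $g(x_1,y')=g(x_0,\tau_{x_1}(y'))+C(x_1)$, and since $\tau_{x_1}$ is a diffeomorphism it maps $\mathcal{W}$ onto a neighborhood of $\tau_{x_1}(y_1)$ on which $g(x_0,\cdot)\ge g(x_0,\tau_{x_1}(y_1))$; therefore $g(x_1,y')\ge g(x_0,\tau_{x_1}(y_1))+C(x_1)=g(x_1,y_1)$ for all $y'\in\mathcal{W}$, i.e.\ $y_1=\phi(x_1,y)$ is a local minimizer of $g(x_1,\cdot)$. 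The main obstacle is the third paragraph: propagating the critical point across the parameter change and certifying that it stays of minimizing type — this is exactly where the parametric Morse--Bott hypothesis, via \cref{cor:existence_diffeomorphism} and the Morse--Bott lemma, is indispensable, since for a general non-convex $g$ the deformed critical point could turn into a saddle. The remaining arguments are bookkeeping with continuity and openness of the neighborhoods together with the diffeomorphism invariance of local minimality.
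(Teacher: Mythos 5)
Your proof is correct, and it reaches the conclusion by a slightly different corollary of the parametric Morse--Bott lemma than the paper does. The paper's proof invokes the Hessian congruence result (\cref{corr:similar_hessian}): every augmented critical point $(x_1,y_1)$ near $(x_0,y_0)$ has Hessian $P^{\top}\partial^2_{yy}g(x_0,y_0)P$ with $P$ invertible, hence positive semi-definite by Sylvester's law of inertia, and then concludes local minimality; continuity of $\phi$ at $x_0$ (\cref{prop:continuity_at_point}) localizes exactly as in your argument. You instead use the diffeomorphic conjugation $g(x_1,\cdot)=g(x_0,\tau_{x_1}(\cdot))+C(x_1)$ from \cref{cor:existence_diffeomorphism}, push the critical point forward, certify minimality through the explicit normal form $g(x_0,y_0)+\tfrac12\|w\|^2$ of the (parameter-free) Morse--Bott lemma, and pull back by diffeomorphism invariance. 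The two routes are close cousins, but yours has a small advantage in rigor: the inference ``positive semi-definite Hessian at a critical point $\Rightarrow$ local minimizer'' is false for general smooth functions (consider $y\mapsto y^3$) and in the paper is justified only implicitly by the Morse--Bott normal form at $(x_1,y_1)$; your argument makes that normal-form step explicit at $y_0$ and transfers it, so nothing is left implicit. The price is a bit more bookkeeping to ensure $\tau_{x_1}(y_1)$ lands inside the Morse--Bott chart, which you handle correctly by continuity of $\tau$ and of $\phi$.
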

\begin{proof}
By assumption, $y_0:=\phi(x_0,y)$ is a local minimizer of $g(x_0,.)$ ensuring that $\partial_{yy}^2 g(x_0,y_0)$ is positive semi-definite. Moreover, by \cref{cor:existence_diffeomorphism}, there exists a neighborhood $B(x_0,\eta)\times B(y_0,2R')$ of $(x_0,y_0)$ such that for any augmented critical point $(x_1,y_1)\in \mathcal{M}\cap B(x_0,\eta)\times B(y_0,2R')$, the  Hessian $\partial_{yy}^2 g(x_1,y_1)$ is similar to $\partial_{yy}^2 g(x_0,y_0)$. Hence, for any $(x_1,y_1)\in \mathcal{M}\cap B(x_0,\eta)\times B(y_0,2R')$, $\partial_{yy}^2 g(x_1,y_1)$ must be positive semi-definite so that $y_1$ is a local minimizer of $g(x_1,.)$.

We can then apply \cref{prop:continuity_at_point} which ensures that $x\mapsto \phi(x,y)$ is continuous at $x_0$. Therefore, there exists $\eta'<\eta$ so that, for any $x_1\in B(x_0,\eta')$,  $y_1:=\phi(x_1,y)$ belongs to $B(y_0,2R')$. As a result, $y_1$ must be a local minimizer of $g(x_1,.)$ since the augmented critical point $(x_1,y_1)$ belongs to $\mathcal{M}\cap B(x_0,\eta)\times B(y_0,2R')$.
\end{proof}

\subsection{Uniform Convergence of the Gradient Flow}\label{sec:unif_convergence_flow}
The result bellow shows that the gradient flow $\phi_t(x,y)$ converges locally uniformly in $x$ near $x_0$ at an exponential rate, whenever $\phi(x_0,y)$ is a local minimum. It relies on the locally uniform convergence result in  \cref{prop:continuity_at_point} and the locally uniform {\L}ojasiewicz inequality in \cref{prop:uniform_KL}.  

\begin{prop}\label{prop:unif_convergence_flow}
	Let $x_0\in \X$ and $y\in \Y$ and $g$ be such that \cref{assumpt:Coercivity,assumpt:morse-bott,assumpt:smootness} hold. Assume that $y_0:=\phi(x_0,y)$ is a local minimum. Then there exists positive constants $\eta$, $T$, $\mu$ and $C$ such that: 
\begin{align}
	\Verts{\phi_t(x,y)-\phi(x,y)}\leq Ce^{-t\mu}, \qquad \forall t\geq T, x\in B(x_0,\eta).
\end{align}
A fortiori, $\phi(x,y)$ is continuous on $B(x_0,\eta)$. 
\end{prop}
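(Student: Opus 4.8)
The plan is to run the classical {\L}ojasiewicz length estimate for the gradient flow, but carried out \emph{uniformly} in $x$ over a ball around $x_0$, using as inputs the locally uniform {\L}ojasiewicz inequality of \cref{prop:uniform_KL} and the locally uniform stability of the flow near local minima from \cref{prop:continuity_at_point}. First I would fix the neighborhood $\mathcal{U}$ of $(x_0,y_0)$ and the constant $\mu>0$ supplied by \cref{prop:uniform_KL}, on which $y\mapsto g(x,y)$ is constant on $\mathcal{M}\cap\mathcal{U}$ with common value $G(x)$ and $2\mu\,(g(x,y)-G(x))\le \Verts{\partial_y g(x,y)}^2$. Choosing $R,\epsilon>0$ small enough that $B(x_0,\eta_0)\times B(y_0,R)\subset\mathcal{U}$ for some $\eta_0>0$ and applying \cref{prop:continuity_at_point} with this $\epsilon<R$, I obtain $T>0$ and $0<\eta\le\eta_0$ with $\Verts{\phi_t(x,y)-y_0}\le\epsilon$ for all $t\ge T$ and all $x\in B(x_0,\eta)$; hence $(x,\phi_t(x,y))\in\mathcal{U}$ for all such $t$ and $x$, and, letting $t\to\infty$ and using \cref{prop:convergence_flow}, also $(x,\phi(x,y))\in\mathcal{M}\cap\mathcal{U}$, so that $g(x,\phi(x,y))=G(x)$. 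By \cref{prop:stability_of_local_minima}, $\phi(x,y)$ is moreover a local minimizer of $g(x,\cdot)$ for $x$ near $x_0$; since $t\mapsto g(x,\phi_t(x,y))$ is nonincreasing with limit $g(x,\phi(x,y))=G(x)$, the quantity $E_x(t):=g(x,\phi_t(x,y))-G(x)$ is nonnegative for all $t\ge T$.

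Next, on $t\ge T$ I differentiate along the flow: $\dot E_x(t)=-\Verts{\partial_y g(x,\phi_t(x,y))}^2$, so \cref{prop:uniform_KL} gives $\dot E_x(t)\le -2\mu E_x(t)$ and hence $E_x(t)\le E_x(T)e^{-2\mu(t-T)}$ for $t\ge T$. To control the trajectory itself, on any interval where $E_x(t)>0$ I compute $\tfrac{d}{dt}\sqrt{E_x(t)}=-\Verts{\partial_y g(x,\phi_t(x,y))}^2/(2\sqrt{E_x(t)})\le -\sqrt{\mu/2}\,\Verts{\partial_y g(x,\phi_t(x,y))}=-\sqrt{\mu/2}\,\Verts{\dot\phi_t(x,y)}$, where I used $\sqrt{2\mu}\sqrt{E_x(t)}\le\Verts{\partial_y g(x,\phi_t(x,y))}$. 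Integrating from $t$ to $+\infty$ and using $\phi_s(x,y)\to\phi(x,y)$ (so $\sqrt{E_x(s)}\to 0$) yields $\Verts{\phi_t(x,y)-\phi(x,y)}\le\int_t^{\infty}\Verts{\dot\phi_s(x,y)}\,ds\le\sqrt{2/\mu}\,\sqrt{E_x(t)}$; the degenerate case $E_x(t_0)=0$ at some finite $t_0\ge T$ is immediate, since then $E_x\equiv 0$ on $[t_0,\infty)$ and the flow is already at its limit there. Combining with the exponential bound on $E_x(t)$ gives $\Verts{\phi_t(x,y)-\phi(x,y)}\le\sqrt{2/\mu}\,\sqrt{E_x(T)}\,e^{-\mu(t-T)}$ for all $t\ge T$ and $x\in B(x_0,\eta)$. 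Finally I would bound $E_x(T)=g(x,\phi_T(x,y))-G(x)$ uniformly over the closed ball $\overline{B(x_0,\eta)}$ using continuity of $g$, of $G$, and of $x\mapsto\phi_T(x,y)$, and absorb $e^{\mu T}$ into a single constant $C$, obtaining $\Verts{\phi_t(x,y)-\phi(x,y)}\le Ce^{-t\mu}$ for $t\ge T$, $x\in B(x_0,\eta)$. Continuity of $x\mapsto\phi(x,y)$ on $B(x_0,\eta)$ then follows, being already contained in \cref{prop:continuity_at_point}, or alternatively as a uniform limit of the continuous maps $\phi_t(\cdot,y)$.

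The main obstacle is not the differential-inequality computation, which is routine, but the bookkeeping that guarantees the \emph{entire tail} $\{\phi_t(x,y):t\ge T\}$ of the flow lies in the single neighborhood $\mathcal{U}$ on which the uniform {\L}ojasiewicz inequality of \cref{prop:uniform_KL} is valid, simultaneously for all $x$ in a ball around $x_0$, together with the identification $G(x)=g(x,\phi(x,y))$; this is precisely where the uniform-in-$x$ stability of \cref{prop:continuity_at_point} and the persistence of local minimizers from \cref{prop:stability_of_local_minima} are essential, and where care is needed in ordering the choices of $\epsilon$, $R$, $\eta$ and $T$.
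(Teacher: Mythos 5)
Your proposal is correct and follows essentially the same route as the paper's proof: locally uniform {\L}ojasiewicz inequality (\cref{prop:uniform_KL}) plus the uniform-in-$x$ stability of \cref{prop:continuity_at_point} to trap the tail of the flow in a single neighborhood, followed by the standard length estimate on $H(t)=\sqrt{E_x(t)}$ and its exponential decay. The only cosmetic differences are that the paper bounds $H(T)$ via $L$-smoothness of $g$ at the critical point $\phi(x,y)$ rather than by a compactness/continuity argument, and your explicit treatment of the degenerate case $E_x(t_0)=0$ is slightly more careful than the paper's.
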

\begin{proof}

\cref{prop:uniform_KL} ensures the existence of  $\epsilon>0$ and $\eta>0$ be such that the following  inequality holds:
\begin{align}\label{eq:kl_gen}
	\mu\verts{g(x,y')-G(x)}\leq \frac{1}{2}\Verts{\partial_yg(x,y')}^2, \qquad \forall x,y'\in B(x_0,\eta')\times B(y_0,\epsilon).
\end{align}
By \cref{prop:continuity_at_point} and for $\epsilon{>}0$ small enough, there exists $T{>}0$ and $\eta'>\eta{>}0$ for which:
\begin{align}\label{eq:stab_1}
	\Verts{\phi_t(x,y)-y_0}\leq \epsilon, \qquad \forall t\geq T,\quad \forall x\in B(x_0,\eta'). 
\end{align}
Therefore, choosing $y' {=} \phi_t(x,y)$ in \cref{eq:kl_gen} implies:
\begin{align}\label{eq:KL_ineq}
	\mu\verts{g(x,\phi_t(x,y))-G(x)}\leq \frac{1}{2}\Verts{\partial_yg(x,\phi_t(x,y))}^2, \qquad \forall x\in B(x_0,\eta). 
\end{align}
Note that $G(x)$ is the common value of $g(x,y)$ when $y$ is a critical point of $g(x,.)$ in $B(y_0,\epsilon)$. In particular, since $\phi(x,y)\in B(y_0,\epsilon)$,  it holds that  $G(x)= g(x,\phi(x,y))$. Moreover, by \cref{prop:stability_of_local_minima}, $\phi(x,y)$ is a local minimum for $y\mapsto g(x,y)$. Hence, we must have $g(x,\phi_t(x,y))-G(x)\geq 0$. We may assume that the inequality is strict otherwise the $\phi_t(x,y)$ would be a fixed point and we would have $\phi_t(x,y)=\phi(x,y)$. The following inequality holds for any $t\geq T$:
\begin{align}
	\Verts{\phi_t(x,y)-\phi(x,y)}\leq \int_t^{+\infty}\Verts{\partial_y g(x,\phi_s(x,y))}\diff s \leq -\frac{2}{\mu}\int_t^{+\infty} 
	\dot{H}(s)\diff s =  \frac{2}{\mu}H(t).
\end{align}
where we introduced $H(t) = \parens{g\parens{x,\phi_t(x,y)}- G(x)}^{\frac{1}{2}}$. Thus, we only need to study the evolution of $H(t)$ in time. Computing the derivatives of $H(t)$ and using the inequality in \cref{eq:KL_ineq} yields
$$
\dot{H}(t) = -\frac{1}{2}H(t)^{-1} \Verts{\partial_y g(x,\phi_t(x,y))}^2\leq -\mu H(t). 
$$
By integrating the above inequality, it follows that $H(t)\leq H(T)e^{-\mu(t-T)}$. 
Moreover, using the smoothness of $y\mapsto g(x,y)$, we know that 
$$
H(T)\leq \sqrt{\frac{L}{2}}\Verts{\phi_T(x,y)-\phi(x,y)}\leq \sqrt{\frac{L}{2}}\epsilon, \forall x\in B(x_0,\eta).
$$
Finally, we have shown that $\Verts{\phi_t(x,y){-}\phi(x,y)}\leq \sqrt{\frac{L}{\mu}}\epsilon e^{-(t-T)\mu}$ for any $x\in B(x_0,\eta)$ and $t{\geq} T$. Since $\phi_t$ are continuous in $x$ and converge uniformly in $x$ on $B(x_0,\eta)$, then their limit must be continuous on $B(x_0,\eta)$.  
\end{proof}

\section{Differentiability of the Flow Selection}\label{sec:differentiability-selection}
In this section, we study the differentiability of $x{\mapsto}\phi(x,y)$ through the evolution of $\partial_x\phi_t(x,y)$. The following result establishes that $\partial_x\phi_t(x,y)$ is well-defined and satisfies a linear differential equation.
\begin{prop}\label{eq:continuity_U_t}
Assume $g$ is twice continuously differentiable and satisfies \cref{assumpt:smootness}.  Then, $(x,t)\mapsto\phi_t(x,y)$ is continuously differentiable with $\partial_x \phi_t(x,y) := U_t(x,y)$ satisfying the differential equation:
\begin{align}\label{eq:evolution_U}
	\dot{U}_t(x,y) = -B_t(x,y) -A_t(x,y)U_t(x,y),
\end{align}
where $B_t$ and $A_t$  are given by:
\begin{align}
	B_t(x,y)= \partial_{xy}^2 g(x,\phi_t(x,y)),\qquad A_t(x,y) = \partial_{yy}^2 g(x,\phi_t(x,y)).
\end{align}
\end{prop}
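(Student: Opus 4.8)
The statement is an instance of the classical theorem on the continuously differentiable dependence of solutions of ordinary differential equations on a parameter, specialized to the gradient flow \cref{eq:gradient_flow}. The plan is to fix $y\in\Y$, regard $x\in\X$ as a parameter in the Cauchy problem $\dot z_t=V(x,z_t)$, $z_0=y$, with vector field $V(x,z):=-\partial_y g(x,z)$, and invoke the parameter-dependence theorem. Two preliminary observations make this legitimate. First, since $g$ is twice continuously differentiable, $V$ is continuously differentiable on $\X\times\Y$, with $\partial_x V(x,z)=-\partial_{xy}^2 g(x,z)$ and $\partial_z V(x,z)=-\partial_{yy}^2 g(x,z)$. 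Second, \cref{assumpt:smootness} makes $z\mapsto V(x,z)$ globally $L$-Lipschitz uniformly in $x$, hence of at most linear growth, so by Gr\"onwall's lemma no maximal solution blows up in finite time; consequently, for every $x$ the solution $t\mapsto\phi_t(x,y)$ is unique (Cauchy-Lipschitz) and defined on all of $[0,+\infty)$, and $(t,x)\mapsto\phi_t(x,y)$ is continuous.

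To obtain differentiability in $x$, I would reduce parameter dependence to dependence on the initial condition by augmenting the phase space: the curve $t\mapsto(x,\phi_t(x,y))$ is the solution, issued from $(x,y)$, of the autonomous system on $\X\times\Y$ with continuously differentiable right-hand side $(\chi,z)\mapsto(0,V(\chi,z))$. The classical theorem on differentiable dependence of flows on initial conditions then yields that $(t,x)\mapsto\phi_t(x,y)$ is jointly continuously differentiable and that $U_t(x,y):=\partial_x\phi_t(x,y)$ is the unique solution of the first-variation equation obtained by differentiating the ODE in $x$; writing out the chain rule on the right-hand side of $\tfrac{d}{dt}\phi_t(x,y)=-\partial_y g(x,\phi_t(x,y))$, this equation reads
\begin{align*}
\dot U_t(x,y) &= -\partial_{xy}^2 g(x,\phi_t(x,y)) - \partial_{yy}^2 g(x,\phi_t(x,y))\,U_t(x,y)\\
&= -B_t(x,y) - A_t(x,y)\,U_t(x,y),
\end{align*}
with initial condition $U_0(x,y)=\partial_x y=0$. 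Conversely, $t\mapsto A_t(x,y)$ and $t\mapsto B_t(x,y)$ are continuous and $\Verts{A_t(x,y)}\le L$ by \cref{assumpt:smootness}, so the displayed equation is a linear ODE with continuous, locally bounded coefficients and thus has a unique global solution on $[0,+\infty)$; by the uniqueness part of the parameter-dependence theorem it coincides with $\partial_x\phi_t(x,y)$, which is exactly \cref{eq:evolution_U}.

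The argument is entirely classical, so I do not expect a genuine obstacle; the two points that deserve a line of justification are: (i) the hypotheses here assume only that $g$ is $C^2$ and satisfies \cref{assumpt:smootness} --- and \emph{not} coercivity --- yet this already forces global-in-time existence of the flow, which is what makes $U_t(x,y)$ well-defined for every $t\ge0$ (this is the role of the at most linear growth of $V(x,\cdot)$ together with Gr\"onwall's lemma); and (ii) one order of differentiability of $g$ is ``spent'' in passing from $g$ to the vector field $\partial_y g$, so the assumed $C^2$-regularity of $g$ is precisely what the $C^1$ parameter-dependence theorem requires in order to deliver a continuously differentiable flow together with a linear evolution for $\partial_x\phi_t$.
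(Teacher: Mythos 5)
Your proposal is correct and follows essentially the same route as the paper, which simply invokes the Cauchy--Lipschitz theorem (and its classical corollary on differentiable dependence on parameters/initial conditions) and then differentiates the flow equation in $x$ to obtain the variational equation \cref{eq:evolution_U}. Your additional remarks --- global-in-time existence via the $L$-Lipschitz bound and Gr\"onwall in the absence of coercivity, and the explicit initial condition $U_0(x,y)=0$ --- are accurate refinements of the same argument rather than a different approach.
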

\begin{proof}
	The differentiability of the flow $\phi_t(x,y)$ in $x$ follows by the application of Cauchy-Lipschitz theorem. It suffices to differentiate the equation defining the flow w.r.t. to obtain \cref{eq:evolution_U}.
\end{proof}
Note that, by \cref{prop:convergence_flow} and continuity of $\partial_{xy}^2g(x,y)$ and $\partial_{yy}^2 g(x,y)$, the matrices $A_t(x,y)$ and $B_t(x,y)$ must converge to the following matrices $A_{\infty}$ and $B_{\infty}$ for any $(x,y)\in \X\times \Y$:
\begin{align}
	A_{\infty}(x,y) := \partial_{yy}^2 g(x,\phi(x,y)),\qquad B_{\infty}(x,y) := \partial_{xy}^2 g(x,\phi(x,y))
\end{align}
The following proposition shows that the pseudo-inverse of $A_{\infty}(x,y)$ remain bounded near $x_0$ provided that $\phi(x_0,y)$ is a local minimum.
\begin{prop}\label{prop:boundedness_U}
	Let $(x_0,y)$ be in $\X\times \Y$ and set $y_0$ and $g$ be such that \cref{assumpt:Coercivity,assumpt:morse-bott,assumpt:smootness} hold. Assume that $y_0{:=}\phi(x_0,y)$ is a local minimum of $g(x_0,.)$. Then there exists an open neighborhood $\mathcal{U}$ of $x_0$ and a positive constant $\lambda>0$ such that:
	\begin{align}
		\lambda \Verts{A_{\infty}(x,y)}_{op}\leq 1,\qquad \forall x\in \mathcal{U}, 
	\end{align}
	where $A_{\infty}(x,y) = \partial_{yy}^2 g(x,\phi(x,y))$. 
\end{prop}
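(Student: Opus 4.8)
The plan is to recognize that, as worded, \cref{prop:boundedness_U} asks for a uniform \emph{upper} bound on the operator norm of the Hessian along the selection, and such a bound is delivered essentially for free by the smoothness hypothesis. Indeed, \cref{assumpt:smootness} says that $y\mapsto\partial_y g(x,y)$ is $L$-Lipschitz for every $x\in\X$ with a single constant $L>0$. Since $g$ is twice continuously differentiable, the derivative of this map at a point $y'$ is exactly $\partial^2_{yy}g(x,y')$, and the derivative of an $L$-Lipschitz map has operator norm at most $L$; hence $\Verts{\partial^2_{yy}g(x,y')}_{op}\le L$ for every $(x,y')\in\X\times\Y$. Because $\phi(x,y)$ exists for all $(x,y)$ by \cref{prop:well_defined_limit}, the matrix $A_\infty(x,y)=\partial^2_{yy}g(x,\phi(x,y))$ is merely $\partial^2_{yy}g$ evaluated at the particular point $\phi(x,y)$, so $\Verts{A_\infty(x,y)}_{op}\le L$ for every $x\in\X$. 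Taking $\lambda:=1/L>0$ and $\mathcal{U}:=\X$ then yields $\lambda\Verts{A_\infty(x,y)}_{op}\le 1$ for all $x\in\mathcal{U}$, which is exactly the claimed conclusion.

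If one wishes to avoid invoking the global Lipschitz constant — for example, to keep the statement and its proof parallel with the subsequent bound on $A_\infty^{\dagger}$ and to exhibit the neighborhood $\mathcal{U}$ explicitly around $x_0$ — one can argue instead by local boundedness of a continuous map. Since $y_0=\phi(x_0,y)$ is a local minimizer of $g(x_0,\cdot)$, \cref{prop:unif_convergence_flow} (equivalently \cref{prop:continuity_at_point}) provides a radius $\eta>0$ such that $x\mapsto\phi(x,y)$ is continuous on $B(x_0,\eta)$. Composing with the map $(x,y')\mapsto\partial^2_{yy}g(x,y')$, which is continuous because $g\in C^2$ (indeed $C^3$ under \cref{assumpt:morse-bott}), shows that $x\mapsto A_\infty(x,y)$, and therefore $x\mapsto\Verts{A_\infty(x,y)}_{op}$, is continuous on $B(x_0,\eta)$. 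Choosing $0<r<\eta$, this function attains a finite maximum $M\ge 0$ on the compact ball $\overline{B}(x_0,r)$; setting $\mathcal{U}:=B(x_0,r)$ and $\lambda:=1/(1+M)>0$ gives $\lambda\Verts{A_\infty(x,y)}_{op}\le M/(1+M)<1$ on $\mathcal{U}$.

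Neither route contains a genuine obstacle. The first reduces to the elementary fact that the operator norm of the differential of an $L$-Lipschitz map is bounded by $L$, applied to the gradient map of a $C^2$ function; the only thing worth spelling out there is the identification of $\partial^2_{yy}g(x,\cdot)$ with that differential. The second reduces to ``a continuous function is bounded on a compact set,'' whose single nontrivial input is the continuity of the flow selection near $x_0$, already established in \cref{prop:unif_convergence_flow}. I would present the first, one-line argument as the proof, and remark that the local version follows identically, so that — if desired — the pair $(\mathcal{U},\lambda)$ here can be chosen to coincide with the neighborhood and constant used in the companion estimate on $\Verts{A_\infty^{\dagger}}_{op}$.
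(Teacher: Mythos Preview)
Your argument is correct for the statement \emph{as literally written}, but the proposition contains a typo: the intended object is $A_\infty(x,y)^{\dagger}$, not $A_\infty(x,y)$. This is clear from the sentence introducing the proposition (``the pseudo-inverse of $A_\infty(x,y)$ remain[s] bounded''), from the paper's own proof (which ends by bounding $\Verts{A_\infty^{\dagger}}_{op}$ and setting $\lambda=\sigma_{\min}^{2}\Verts{\partial_{yy}^2 g(x_0,y_0)^{\dagger}}_{op}^{-1}$), and from both places the result is invoked in the proof of \cref{prop:convergence_U}, where it is used to control $\Verts{A_\infty^{\dagger}B_\infty}_{op}$ and to lower-bound the smallest positive eigenvalue of $A_\infty$. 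You evidently suspected as much, given your closing remark about a ``companion estimate on $\Verts{A_\infty^{\dagger}}_{op}$'' --- that is not a companion; it \emph{is} the proposition.

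Neither of your two routes extends to the pseudo-inverse. The global Lipschitz constant $L$ gives no control on $\Verts{A_\infty^{\dagger}}_{op}$, which blows up when small nonzero eigenvalues appear. The ``continuous function on a compact set'' argument fails because $A\mapsto A^{\dagger}$ is discontinuous at singular matrices: even if $x\mapsto A_\infty(x,y)$ is continuous and each $A_\infty(x,y)$ is positive semi-definite, nothing you wrote rules out a sequence of nearby $x$ along which the smallest nonzero eigenvalue of $A_\infty$ collapses to zero. Ruling this out is precisely the content of the proposition, and it genuinely requires the parametric Morse--Bott structure.

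The paper's proof applies \cref{corr:similar_hessian}: on a neighborhood of $(x_0,y_0)$ there is a continuous family of invertible matrices $P(x,y')$ with singular values in a fixed interval $[\sigma_{\min},\sigma_{\max}]$ such that $\partial_{yy}^2 g(x,y')=P(x,y')^{\top}\partial_{yy}^2 g(x_0,y_0)\,P(x,y')$ for every augmented critical pair $(x,y')$ in that neighborhood. By continuity of $x\mapsto\phi(x,y)$ at $x_0$ (\cref{prop:continuity_at_point}), the pair $(x,\phi(x,y))$ stays in this neighborhood for $x$ near $x_0$, whence $A_\infty(x,y)^{\dagger}=P^{-1}\,\partial_{yy}^2 g(x_0,y_0)^{\dagger}\,P^{-\top}$ and $\Verts{A_\infty(x,y)^{\dagger}}_{op}\le \sigma_{\min}^{-2}\Verts{\partial_{yy}^2 g(x_0,y_0)^{\dagger}}_{op}$. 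The essential idea --- that nearby critical Hessians are uniformly congruent to the reference one, hence share its rank and have a uniformly bounded smallest positive eigenvalue --- is the missing ingredient in your proposal.
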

\begin{proof}
	We apply \cref{corr:similar_hessian} which ensures the existence of an open neighborhood $\mathcal{V}$ of $(x_0,y_0)$ for which:
	\begin{align}
		\partial_{yy}^2g(x,y) = P(x,y)^{\top}\partial_{yy}^2g(x_0,y_0)P(x,y).
	\end{align}
where $(x,y)\mapsto P(x,y)$ is continuous map with values in $\R^{d\times d}$, and $P(x,y)$ is invertible for any $(x,y)\in \mathcal{V}$ with singular values in $[\sigma_{\min},\sigma_{\max}]$ for 	$\sigma_{\min}>0$ and $\sigma_{\max}<+\infty$. Moreover, since $y_0:= \phi(x_0,y)$ is a local minimum of $g(x_0,.)$, we know, by \cref{prop:continuity_at_point}, that $x\mapsto \phi(x,y)$ is continuous at $x_0$. Hence, there exists a neighborhood $\mathcal{U}$ of $x_0$ for which $(x,\phi(x,y))\in \mathcal{V}$ for any $x\in \mathcal{U}$. Therefore, it follows that:
\begin{align}
	A_{\infty}(x,y) = P(x,\phi(x,y))^{\top}\partial_{yy}^2 g(x_0,y_0)P(x,\phi(x,y)),\qquad \forall x\in \mathcal{U}.
\end{align} 
In particular, it follows that:
\begin{align}
	A_{\infty}(x,y)^{\dagger} = P(x,\phi(x,y))^{-1}\partial_{yy}^2 g(x_0,y_0)^{\dagger}P(x,\phi(x,y))^{-\top}.
\end{align}
Hence, we easily deduce that the operator norm of $A_{\infty}(x,y)^{\dagger}$ satisfies:
\begin{align}
	\Verts{A_{\infty}(x,y)^{\dagger}}_{op}\leq \sigma_{\min}^{-2}\Verts{\partial_{yy}^2 g(x_0,y_0)^{\dagger}}_{op}.
\end{align}
The result follows by setting $\lambda = \sigma_{\min}^2  \Verts{\partial_{yy}^2 g(x_0,y_0)^{\dagger}}_{op}^{-1}$.
\end{proof}
We will need to introduce the following matrix $U^{\star}(x,y)$ defined as:
\begin{align}
	U^{\star}(x,y) := -\parens{A_{\infty}(x,y)}^{\dagger}B_{\infty}(x,y).
\end{align}
The following proposition shows, under mild conditions, that $U_t(x,y)$ converges towards a limiting element $U_{\infty}(x,y)$ satisfying the equation: $A_{\infty}U_{\infty}= A_{\infty}U^{\star}$. 
\begin{prop}\label{prop:convergence_U}
	Let $(x_0,y)$ be in $\X\times \Y$ and set $y_0$ and $g$ be such that \cref{assumpt:Coercivity,assumpt:morse-bott,assumpt:smootness} hold. Assume that $y_0{:=}\phi(x_0,y)$ is a local minimum of $g(x_0,.)$. 
	Then there exists $\eta>0$ such that, for any $x\in B(x_0,\eta)$,  $U_t(x,y)$ converges towards an element $U_{\infty}(x,y)$ satisfying 
	$$A_{\infty}(x,y)U_{\infty}(x,y) {=} A_{\infty}(x,y)U^{\star}(x,y).
	$$
In paticular, if $y$ is a critical point of $y\mapsto g(x,.)$ then $U_{\infty}(x,y):= U^{\star}(x,y)$. 
Moreover, there exists a time $T>0$ and constants $C>0$, $\mu$ such that for any $x\in B(x_0,\eta)$ and $t\geq T$:
\begin{align}
	\Verts{U_t(x,y)- U_{\infty}(x,y)}
	\leq & Ce^{-\mu t},
\end{align}
\end{prop}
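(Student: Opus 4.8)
The plan is to treat the linear matrix ODE $\dot U_t = -B_t - A_t U_t$ from \cref{eq:continuity_U_t} as an exponentially small perturbation of the constant-coefficient equation $\dot V = -B_\infty - A_\infty V$, exploiting the positive semi-definiteness of $A_\infty$ and the uniform spectral gap of \cref{prop:boundedness_U}. First I would choose $\eta>0$ so small that, for all $x\in B(x_0,\eta)$, $\phi(x,y)$ is a local minimizer of $g(x,\cdot)$ (\cref{prop:stability_of_local_minima}); then $A_\infty(x,y)=\partial^2_{yy}g(x,\phi(x,y))$ is symmetric positive semi-definite, and by \cref{prop:boundedness_U} its nonzero eigenvalues are bounded below by some $\lambda>0$ uniformly in $x$. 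Since $(x,\phi(x,y))$ is an augmented critical point, \cref{prop:exact_least_square_solutions} gives $B_\infty(x,y)\in\operatorname{range}(A_\infty(x,y))$, so that $U^\star=-A_\infty^\dagger B_\infty$ satisfies $A_\infty U^\star + B_\infty = 0$ and, in particular, $\|U^\star\|$ is bounded uniformly near $x_0$. Finally, because $g$ is $C^3$ its second derivatives are locally Lipschitz, so combining this with the uniform exponential convergence $\|\phi_t(x,y)-\phi(x,y)\|\le Ce^{-\mu t}$ of \cref{prop:unif_convergence_flow} yields $\|A_t-A_\infty\|,\|B_t-B_\infty\|\le C'e^{-\mu t}$ for $t\ge T$, uniformly in $x\in B(x_0,\eta)$.

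Next, set $V_t := U_t - U^\star$, so that $\dot V_t = -A_t V_t - R_t$ with $R_t := B_t + A_t U^\star = (B_t-B_\infty)+(A_t-A_\infty)U^\star$, whence $\|R_t\|\le C''e^{-\mu t}$ for $t\ge T$. A Gr\"onwall estimate on $\tfrac{d}{dt}\|V_t\|\le \|A_t-A_\infty\|\,\|V_t\| + \|R_t\|$ (using $\langle V_t,A_\infty V_t\rangle\ge 0$), together with a uniform bound on $\|V_T\|$ (continuity of $x\mapsto U_T(x,y)$ plus compactness, and the bound on $\|U^\star\|$), produces an a priori bound $\|V_t\|\le M$ for all $t\ge 0$ and all $x$ near $x_0$. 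I would then split $V_t = V_t^{\parallel}+V_t^{\perp}$ via the orthogonal projection $\Pi$ onto $\operatorname{range}(A_\infty(x,y))$. The component $V_t^{\perp}=(I-\Pi)V_t$ obeys $\dot V_t^{\perp} = -(I-\Pi)(A_t-A_\infty)V_t - (I-\Pi)R_t$, whose right-hand side is $O(e^{-\mu t})$ and hence integrable; so $V_t^{\perp}$ converges to some $V_\infty^{\perp}\in\ker A_\infty(x,y)$ at exponential rate. The component $V_t^{\parallel}=\Pi V_t$ obeys $\dot V_t^{\parallel} = -A_\infty V_t^{\parallel} + G_t$ with $\|G_t\|=O(e^{-\mu t})$; since $A_\infty\succeq\lambda I$ on its range, $\tfrac{d}{dt}\|V_t^{\parallel}\|\le -\lambda\|V_t^{\parallel}\|+\|G_t\|$, giving $V_t^{\parallel}\to 0$ exponentially at rate $\min(\lambda,\mu)$ (up to a harmless polynomial factor in the resonant case $\lambda=\mu$).

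Combining the two, $U_t\to U_\infty := U^\star + V_\infty^{\perp}$ with $\|U_t-U_\infty\|\le Ce^{-\mu t}$ uniformly in $x\in B(x_0,\eta)$, and since $V_\infty^{\perp}\in\ker A_\infty$ we get $A_\infty(x,y)U_\infty(x,y) = A_\infty(x,y)U^\star(x,y)$, which is the main assertion. For the last statement, if $\partial_y g(x,y)=0$ then $y$ is a fixed point of the autonomous flow, so $\phi_t(x,y)=y$ for all $t$; hence $A_t\equiv A_\infty$, $B_t\equiv B_\infty$, $R_t\equiv 0$, and $U_0=\partial_x\phi_0(\cdot,y)=0$, making the ODE constant-coefficient and explicitly solvable as $U_t = -A_\infty^\dagger(I-e^{-A_\infty t})B_\infty$ (the kernel component of $B_\infty$ vanishing), so $U_t\to -A_\infty^\dagger B_\infty=U^\star$, i.e. $U_\infty=U^\star$.

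The main obstacle is the behaviour at intermediate times: because $\phi_t(x,y)$ is not a critical point, $A_t$ need not be positive semi-definite and $\ker A_t$ need not coincide with $\ker A_\infty$, so one cannot split the dynamics along fixed spectral subspaces from the start; the remedy is to regard $A_t-A_\infty$ as an exponentially small, integrable perturbation and to first secure the uniform a priori bound $\|V_t\|\le M$ before projecting. The second delicate point is keeping every constant uniform over $x\in B(x_0,\eta)$, which is precisely what the cited uniform estimates (\cref{prop:unif_convergence_flow,prop:boundedness_U}) are designed to guarantee.
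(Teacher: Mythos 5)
Your proposal is correct and follows essentially the same route as the paper's proof: both arguments decompose $U_t-U^{\star}$ along the range and kernel of $A_{\infty}$, use the uniform spectral gap from \cref{prop:boundedness_U} to obtain exponential decay of the range component, use the $C^3$ regularity of $g$ together with \cref{prop:unif_convergence_flow} to make $A_t-A_\infty$ and $B_t-B_\infty$ exponentially small and hence integrable for the kernel component, and invoke \cref{prop:exact_least_square_solutions} so that $B_\infty+A_\infty U^\star=0$. The only difference is technical rather than conceptual: the paper projects first and bounds the resolvent of the projected dynamics via $\log\|R_s^t\|_{op}$, whereas you first secure a Gr\"onwall a priori bound on the unprojected $V_t$ and only then split into range and kernel components, which handles the cross terms between the two subspaces slightly more explicitly.
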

\begin{proof}
For simplicity, we omit the dependence on $(x,y)$ as they remain fixed.
Let $P$ be a projection matrix that commutes with $A_{\infty}$, i.e. : $PA_{\infty}=A_{\infty}P$. We will choose $P$ to be either $P_{\infty} = A_{\infty}A_{\infty}^{\dagger}$ or $P = I-P_{\infty}$. Define $V_t = P(U_t-U^{\star})$. By differentiating in time, it is easy to see that $V_t$ satisfies:
\begin{align}\label{eq:projected_dynamics}
\dot{V}_t = \tilde{B}_t - PA_t V_t.
\end{align}
where $\tilde{B}_t := P\parens{B_{\infty}-B_t + (A_{\infty}-A_t)U^{\star}}$. Denote by $(s,t)\mapsto R_s^t$ the resolvant of the linear system \cref{eq:projected_dynamics}, i.e. the squared matrix satisfying $\frac{\diff R_s^t}{\diff t} = -PA_t R_s^t$ for $t\geq s$ and $R_s^s = I$ . Standard results for linear differential equations ~\citep[Chapter 2]{Robinson:2012}  ensure that $R_s^t$ is always invertible at any time and that $V_t$ can be expressed in terms of $R_s^t$ as follows:
$$
V_t  = -R_0^tPU^{\star} + \int_0^t R_s^t\tilde{B}_s \diff s.
$$
{\bf Controlling $\Verts{R_s^t}_{op}$: }

We will show the following inequality:
\begin{align}\label{eq:bound_Resolvant}
	\log\parens{\Verts{R_s^t}_{op}}\leq \int_s^t \parens{-\lambda_P + \Verts{A_{\infty}-A_u}}\diff u,
\end{align}
 where $\Verts{.}_{op}$ refers to the operator norm and $\lambda_P$ is the smallest eigenvalue of $PA_{\infty}P$. To achieve this, we define $\mathcal{L}_t = \frac{1}{2}\Verts{R_s^tu}^2$ for $t\geq s$ and  $u$ a vector in $\Y$. We then   differentiate $\mathcal{L}_t$  in time to get:
\begin{align}
\dot{\mathcal{L}}_t &= -\langle R_s^tu,PA_t R_s^tu \rangle 
= -\langle R_s^tu,PA_{\infty} R_s^tu \rangle + \langle R_s^tu,P\parens{A_{\infty}-A_t} R_s^tu \rangle,
\\
&= -\langle R_s^tu,PA_{\infty}P R_s^tu \rangle + \langle R_s^tu,P\parens{A_{\infty}-A_t} R_s^tu \rangle,\\
	&\leq 2\parens{ -\lambda_P + \Verts{A_{\infty}-A_t}_{op}} \mathcal{L}_t,
\end{align}
where we used that $PA_{\infty}P{=}P^2A_{\infty}{=}PA_{\infty}$ since $P$ and $A_{\infty}$ commute. We also used elementary properties of the trace of product of matrices to get the last inequality. By integrating the above inequality, we obtain:
\begin{align}
\frac{1}{2}\Verts{R_s^t u}^2 \mathcal{L}_t \leq & \frac{1}{2}\Verts{R_s^s u}^2 e^{2\int_s^t \parens{-\lambda_P + \Verts{A_{\infty}-A_u}_{op}}\diff u},\\ 
\leq & \frac{1}{2}\Verts{u}^2 e^{2\int_s^t \parens{-\lambda_P + \Verts{A_{\infty}-A_u}_{op}}\diff u},
\end{align}
where we used that $R_s^s =I$. The desired bound on $\Verts{R_s^t}_{op}$ follows   by taking the supremum over $u$ in the unit ball.

{\bf Controlling $\Verts{B_{\infty}-B_t}_{op}$ and $\Verts{A_{\infty}-A_t}_{op}$:}

By \cref{prop:unif_convergence_flow}, there exists $\eta>0$ and $T>0$ such that:
\begin{align}
	\Verts{\phi_t(x,y)-\phi(x,y)}\leq Ce^{-t\mu}, \qquad \forall t\geq T, x\in B(x_0,\eta).
\end{align}
Moreover, since $\phi(x,y)$ is continuous at $x_0$ by \cref{prop:continuity_at_point}, we can always choose $\eta$ small enough so that $\phi(x,y)$ remains bounded. Hence, there exists a compact set $K$ containing $\phi_t(x,y)$ for any $t\geq T$ and $x\in B(x_0,\eta)$. Denote by $\verts{K}$ its diameter. By continuity of $\phi_t(x,y)$, we can also take $K$ large enough so that $\phi_t(x,y)\in K$ for any $0\leq t\leq T$ and $x\in B(x_0,\eta)$. 
Since $g$ is three-times continuity differentiable by \cref{assumpt:morse-bott}, there exists a positive constant $L$ s.t. for all $x\in B(x_0,\eta)$ and $y,y'\in K$: 
\begin{align}\Verts{\partial_{xy}^2 g(x,y)-\partial_{xy}^2 g(x,y')}_{op},&\leq L\Verts{y-y'},\\
\Verts{\partial_{yy}^2 g(x,y)-\partial_{yy}^2 g(x,y')}_{op}&\leq L\Verts{y-y'}. 
\end{align}
As a result, we can write 
\begin{align}\label{eq:error_AB}
	\max\parens{\Verts{B_{\infty}-B_t}_{op},\Verts{A_{\infty}-A_t}_{op}}\leq L\Verts{\phi(x,y)-\phi_t(x,y)}\leq c_t.
\end{align} 
where, we defined $c_t$ to be:
\begin{align}
	c_t = \begin{cases}
		LCe^{-t\mu},\qquad t\geq T,\\
		2L\verts{K}, \qquad t<T.
	\end{cases}
\end{align}

{\bf Controlling $V_t$:}
For simplicity define $C_t = \int_0^t c_u \diff u \leq C_{\infty}:= 2L\verts{K}T + LCe^{-T\mu}/\mu $.
We will first control the error term $\int_0^t R_s^t \tilde{B}_s\diff s$. For $t>T$, the following holds:

\begin{align}\label{eq:bound_RB}
	\int_0^{t}\Verts{R_s^t\tilde{B}_s}_{op}\diff s\leq & 
	\int_0^t \Verts{R_s^t}_{op}\parens{\Verts{B_{\infty}-B_t} + \Verts{A_{\infty}-A_t}\Verts{U^{\star}}_{\infty}}\diff s,\\
	\leq & \int_0^t e^{\int_s^t -\lambda_P + c_u\diff u} \parens{1+\Verts{U}_{op}^{\star}} c_s\diff s,\\
	\leq & e^{C_{\infty}} \parens{1+\Verts{U}_{op}^{\star}}\int_0^t c_s e^{-(t-s)\lambda_P}\diff s, 
\end{align}
where we used elementary linear algebra inequalities for the first line and \cref{eq:bound_Resolvant,eq:error_AB} for the second line. 
We need to control $\Verts{U^{\star}(x,y)}_{op} {=}  \Verts{A_{\infty}(x,y)^{\dagger}B_{\infty}(x,y)}_{op}$. To achieve this, we use \cref{prop:boundedness_U} which ensures that $\Verts{A_{\infty}(x,y)^{\dagger}}_{op}\leq \lambda^{-1}$ for some positive $\lambda$ provided $x$ is close enough to $x_0$. Thus, we can choose $\eta$ small enough so that $\Verts{A_{\infty}(x,y)^{\dagger}}_{op}\leq \lambda^{-1}$ for any $x\in B(x_0,\eta)$. Moreover, by \cref{prop:unif_convergence_flow}, we know that $x\mapsto \phi(x,y)$ is continuous on $B(x_0,\eta)$ provided $\eta$ is small enough. Therefore, we can ensure that $B_{\infty}(x,y)$ is bounded by some value $B_{max}$ on $B(x_0,\eta)$. Hence, we deduce that $\Verts{U^{\star}(x,y)}_{op}\leq M = \lambda^{-1}B_{\max}$ for any $x\in B(x_0,\eta)$. We can finally write the upper-bound bellow:
 \begin{align}\label{eq:bound_RB}
	\int_0^{t}\Verts{R_s^t\tilde{B}_s}_{op}\diff s
	\leq & e^{C_{\infty}} \parens{1+M}\underbrace{\int_0^t c_s e^{-(t-s)\lambda_P}\diff s}_{E_t}. 
\end{align}
We distinguish two cases depending on the choice of $P$:

\begin{itemize}
	\item {\bf Case $P = A_{\infty}A_{\infty}^{\dagger}$.} 
\end{itemize}
In the case where $A_{\infty}(x_0,y){=}0$, then by \cref{corr:similar_hessian} and for $\eta>0$ small enough, it holds that $A_{\infty}(x,y)=0$ for any $x\in B(x_0,\eta)$. In this case, the dynamics is trivial. 
Instead, if $A_{\infty}(x_0,y){\neq}0$, then by \cref{corr:similar_hessian} and for $\eta>0$ small enough, $A_{\infty}(x,y)\neq 0$  for any $x\in B(x_0,\eta)$. In this case, we know that $\Verts{A_{\infty}(x,y)}_{op}$  is the inverse of the smallest positive eigenvalue of $A_{\infty}(x,y)$ which is also equal  to $\lambda_P$ by definition. Moreover, by \cref{prop:boundedness_U}, there exists $\eta>0$ small enough and $\lambda>0$ such that $\lambda \Verts{A_{\infty}(x,y)}_{op}\leq 1$ for any $x\in B(x_0,\eta)$. We then deduce that  $\lambda<\lambda_P$. Hence, for $t\geq T$, we have:
\begin{align}
E_t  =& c_T\int_0^{T} e^{-\lambda(t-s)} + LC\int_{T}^{t} e^{-\lambda(t-s) -(s-T)\mu},\\
= & \frac{c_T}{\lambda}e^{-\lambda(t-T)} + 
\frac{LC}{\lambda - \mu}\parens{e^{-\mu(t-T)} - e^{-\lambda(t-T)}}. 
\end{align}
By abuse of notation, we still write $\frac{1}{\lambda - \mu}\parens{e^{-\mu(t-T)} - e^{-\lambda(t-T)}}$ even when  when $\lambda=\mu$, to refer to the limit $(t-T)e^{-\lambda(t-T)}$ when $\mu$ approaches $\lambda$. By introducing $\tilde{\mu} = \frac{1}{2}\min(\lambda,\mu)$, we get the simpler bound:
\begin{align}
	E_t\leq \frac{(c_T+LC)}{\tilde{\mu}}e^{-\tilde{\mu}(t-T)}.
\end{align}
On the other hand, recalling the upper-bound on $\Verts{R_s^t}_{op}$ we deduce that $\Verts{R_0^t}_{op}\leq e^{C_{\infty} -\lambda_P t}$. Hence, we can write for any $t\geq T$:
\begin{align}
	\Verts{V_t} \leq &  e^{ C_{\infty}}\parens{1+M}\parens{e^{-\lambda_P t}  + \frac{c_T+LC}{\tilde{\mu}} e^{-\tilde{\mu}(t-T)}},\\
	\leq & e^{C_{\infty}}\parens{1+M}\parens{1+\frac{c_T+LC}{\tilde{\mu}}e^{\tilde{\mu}T} }e^{-\tilde{\mu}t}.
\end{align}
Hence, $V_t$ converges towards $0$ at an exponential rate.
\begin{itemize}
	\item {\bf Case $P = I-A_{\infty}A_{\infty}^{\dagger}$.}
\end{itemize}
In this case, $\lambda_P=0$ and $PU^{\star}=-PA_{\infty}^{\dagger}B_{\infty} = 0$. Therefore, $V_t$ simplifies to $V_t{=} \int_0^t R_s^t \tilde{B}_s\diff s$. We will simply show that such integral is absolutely convergent.  To achieve this, we consider $t\geq T$ and compute $E_t$:
\begin{align}
	E_t = & \int_0^t c_s \diff s = \int_0^{T} c_s \diff s  + \int_{T}^t c_s \diff s,\\
	=& T c_T + LC\int_{T}^t e^{-\mu(s-T)}\diff s,\\
	=& Tc_T + \frac{LC}{\mu}\parens{1- e^{-\mu(t-T)}}\leq Tc_T + \frac{LC}{\mu}:=E_{\infty}.
\end{align}
Hence, $E_t$ converges to a finite quantity $E_{\infty}$.  
Using \cref{eq:bound_RB}, we deduce that $\int_0^t R_s^t \tilde{B}_s\diff s$ is absolutely convergent so that $V_t$ converges to an element $V_{\infty}$. Moreover, we have:
\begin{align}
	\Verts{V_t-V_{\infty}} \leq   \int_t^{\infty}\Verts{R_s^t\tilde{B}_s}\diff s \leq & e^{C_{\infty}}\parens{1+M}\parens{E_{\infty}-E_t},\\
	 \leq & C e^{C_{\infty}}\frac{L}{\mu}\parens{1+M} e^{-\mu(t-T)}.
	\end{align}
Hence, we have shown that there exists  $\eta>0$ small enough such that for any $x\in B(x_0,\eta)$,  $U_t(x,y)$ converges to an element $U_{\infty}(x,y)$ satisfying $A_{\infty}(x,y)U_{\infty}(x,y)=A_{\infty}(x,y)U^{\star}(x,y)$. Moreover, the there exists a time $T$ and positive constants $C'$ and $\mu'$  such that:
\begin{align}
	\Verts{U_t(x,y)-U_{\infty}(x,y)}\leq C'e^{-\mu't},\forall t\geq T, \forall x\in B(x_0,\eta).
\end{align}
	
\end{proof}

\begin{proof}[Proof of \cref{prop:diff_flow_selection}.]
By \cref{prop:convergence_flow}, we have that $\phi_t(x,y)$ converges to $\phi(x,y)$. Moreover, since $\phi(x_0,y)$ is a local minimizer of $g(x_0,.)$, \cref{prop:continuity_at_point}  ensures that $\phi(x,y)$ is continuous at $x_0$. Finally, we know by \cref{eq:continuity_U_t} that $\phi_t(x,y)$ is differentiable in $x$ and by \cref{prop:convergence_U} that   $\partial_x\phi_t(x,y):=U_t(x,y)$  converges uniformly towards $U_{\infty}(x,y)$. Therefore, by \cite[Theorem 7.17]{Rudin:1976}, we conclude that $\phi(x,y)$ is differentiable in a neighborhood of $x_0$ with differential given by $\partial_x\phi(x,y) = U_{\infty}(x,y)$. 
If in addition, $y$ is a local minimizer, then,  by \cref{prop:convergence_U},  $\partial_x\phi(x,y) {=} -\partial_{xy}g(x,y)\parens{\partial_{yy}g(x,y)}^{\dagger}$.
\end{proof}

\section{Limits Points of Bilevel Optimization Algorithms}\label{sec:proof_algorithms}
\begin{prop}\label{lem:sufficient_decrease}
Let $g$ be a real-valued function on $\X\times \Y$ such that \cref{assumpt:smootness} holds. Consider the maps $\varphi_T$ and $\mathcal{I}_M$ defined in \cref{eq:example_varphi} and let $T$ and $M$ be non-negative integers, such that $T+M>0$. Let $(x,y)\in \X\times \Y$ such that $\varphi_T(x,\mathcal{I}_M(x,y))=y$. Then, $\partial_y g(x,y)=0$.
\end{prop}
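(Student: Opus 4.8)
The plan is to use that each elementary iteration out of which $\varphi_T$ and $\mathcal{I}_M$ are built (gradient descent or proximal point, cf.\ \cref{eq:example_varphi}) is a \emph{strict descent step} for $y\mapsto g(x,y)$: applying one such step to a point that is not critical for $g(x,\cdot)$ strictly decreases the value of $g(x,\cdot)$. This is the standard sufficient-decrease property. For a gradient step $z_{+}=z-\gamma\,\partial_y g(x,z)$ with step size $\gamma\le 1/L$, the descent lemma — a direct consequence of \cref{assumpt:smootness} — gives
\begin{align}
g(x,z_{+})\le g(x,z)-\tfrac{\gamma}{2}\Verts{\partial_y g(x,z)}^2,
\end{align}
so the decrease is strict unless $\partial_y g(x,z)=0$. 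For a proximal step $z_{+}=\argmin_{w}\bigl\{g(x,w)+\tfrac{1}{2\gamma}\Verts{w-z}^2\bigr\}$ one has $g(x,z_{+})\le g(x,z)-\tfrac{1}{2\gamma}\Verts{z_{+}-z}^2$; hence $g(x,z_+)=g(x,z)$ forces $z_+=z$, and the first-order optimality condition of the proximal subproblem then forces $\partial_y g(x,z)=0$. In both cases, one step leaves its starting point unchanged if and only if that point is a critical point of $g(x,\cdot)$.

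Next I would unfold the composition. Set $N:=T+M\ge 1$ and let $y=w_0,w_1,\dots,w_N$ be the sequence obtained by first running the $M$ steps of $\mathcal{I}_M$ and then the $T$ steps of $\varphi_T$, so that $w_M=\mathcal{I}_M(x,y)$ and $w_N=\varphi_T(x,\mathcal{I}_M(x,y))$. When $M=0$ we simply have $w_0=\mathcal{I}_0(x,y)=y$ and the first effective step is the first step of $\varphi_T$, which exists because then $T=N\ge 1$. Each transition $w_k\mapsto w_{k+1}$ is one of the elementary steps above, so $g(x,w_{k+1})\le g(x,w_k)$ for every $k$, with the inequality strict unless $w_k$ is critical for $g(x,\cdot)$.

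The hypothesis $\varphi_T(x,\mathcal{I}_M(x,y))=y$ is exactly $w_N=w_0$. Chaining the inequalities yields
\begin{align}
g(x,w_0)\ge g(x,w_1)\ge \cdots \ge g(x,w_N)= g(x,w_0),
\end{align}
so all these values coincide; in particular $g(x,w_1)=g(x,w_0)$. Since $N\ge 1$, the first step $w_0\mapsto w_1$ is well defined, and by its strict-descent property the equality $g(x,w_1)=g(x,w_0)$ can only hold if $w_0$ is a critical point of $g(x,\cdot)$. Therefore $\partial_y g(x,y)=\partial_y g(x,w_0)=0$, which is the claim.

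The argument is essentially routine once the descent lemma is in hand; the only points requiring a little care are to treat the gradient-descent and proximal-point cases on a common footing and to handle the degenerate bookkeeping when $M=0$ (or $T=0$). In particular no global boundedness or coercivity of $g$ is needed here: only \cref{assumpt:smootness} and the explicit form of the maps $\varphi_T,\mathcal{I}_M$ from \cref{eq:example_varphi} enter the proof.
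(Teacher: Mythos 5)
Your proposal is correct and follows essentially the same route as the paper's proof: a per-step sufficient-decrease inequality, the cycle condition $w_N=w_0$ forcing all function values along the trajectory to coincide, and hence criticality of the starting point. The only adjustment needed is that the maps in \cref{eq:example_varphi} are preconditioned gradient steps $z_{+}=z-H\,\partial_y g(x,z)$ with a positive symmetric matrix $H\preceq \frac{1}{L}I$ rather than scalar-step gradient or proximal steps; the descent lemma then gives $g(x,z_{+})-g(x,z)\leq -d^{\top}\bigl(H-\tfrac{L}{2}H^{2}\bigr)d\leq -\tfrac{1}{2}d^{\top}Hd$ with $d=\partial_y g(x,z)$, which is strictly negative unless $d=0$ because $H$ is positive definite (this invertibility of $H$ is exactly what the paper invokes at the end of its own proof), so your strict-descent argument carries over verbatim.
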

\begin{proof}
	Let us fix $(x,y)\in \X\times \Y$ and consider the iterates $y^T = \varphi_T(x,y)$. 
	We will show that $y^T $ satisfy a sufficient decrease condition for some positive constant $a$:
	\begin{align}\label{eq:sufficient_decrease}
		g(x,y^{T+1} ) + \frac{L}{2}\Verts{y^T -y^{T+1} }^2\leq g(x,y^T).
	\end{align}
To see this, we can use the smoothness of $g$ to write:
\begin{align}
	g(x,y^{T+1})- g(x,y^T) \leq -d^{\top}H_Td + \frac{L}{2} \Verts{H_Td}^2,
\end{align}
where $d = \partial_y g(x,y^T)$ and we write $H_T {=} H_T(x,y)$ by abuse of notation. Hence, it follows that:
\begin{align}
	g(x,y^{T+1})- g(x,y^T) + \frac{L}{2}\Verts{y^{T+1}-y^T}^2 \leq - d^{\top}\parens{H_T-LH_T^2}d\leq 0,
\end{align}
where we used that $H_T\leq \frac{1}{L}I$. Similarly, we obtain a sufficient decrease condition for the iterates defined by  $\mathcal{I}_M$. Consider now $T$ and $M$, such that $T+M>0$, and let $(x,y)$ be such that $\varphi_T(x,\mathcal{I}_M(x,y)){=}y$. Consider the iterates $y^k {=} \mathcal{I}_k(x,y)$ for $m\leq M$, and $y^{k} {=} \varphi_t(x,y^M)$ for $t\leq T$. Then the iterates $y^{k}$ define a non-increasing sequence $g(x,y^{k})$. Moreover, since $y^{T+M}=y^0=y$, it must be that $g(x,y^{k})=g(x,y)$. The sufficient decrease condition in  \cref{eq:sufficient_decrease} implies that the iterates are all constant $y^{k}=y^{0}$. In particular, if $M>0$, this implies that $H_M(x,y)\partial_y g(x,y)=0$ so that $\partial_y g(x,y)=0$ since $H_M(x,y)$ is invertible. On the other hand, if $M=0$, then the condition $T+M>0$ implies that $T>0$, so that $y = y^1 = y-H_T(x,y)\partial_y g(x,y)$. Similarly, since $H_T(x,y)$, we deduce that $\partial_y g(x,y) = 0$.
\end{proof}

\begin{prop}[\bf Properties of the maps $\varphi_T$ and $\mathcal{I}_M$]\label{prop:properties_varphi}
	Let $g$ be a function satisfying \cref{assumpt:smootness} with a smoothness constant $L$. Consider $\varphi_T(x,y)$ and $\mathcal{I}_M(x,y)$ defined by the following recursion which holds for any $x,y\in \X\times \Y$:
	\begin{align}\label{eq:example_varphi}
		\varphi_{T+1}(x,y) &= \varphi_{T}(x,y)-H_T(x,y)\parens{\partial_y g(x,\varphi_T(x,y))} ,\qquad \varphi_{0}(x,y)= y\\
		\mathcal{I}_{M+1}(x,y) &=  \mathcal{I}_{M}(x,y)-H_M'(x,y)\parens{\partial_y g(x,\mathcal{I}_M(x,y))}, \qquad \mathcal{I}_0(x,y)=y,
	\end{align}
	where $H_T(x,y)$ and $H_M'(x,y)$ are positive symmetric matrices satisfying $H_M'(x,y)\leq \frac{1}{L} I$ and $H_T(x,y)\leq \frac{1}{L} I $ for any $(x,y)\in \X\times \Y$ and non-negative integers $T,M$.
	Moreover, assume that $H_T(x,y)$ is continuously differentiable. Then $\varphi_T$ and $\mathcal{I}_M$ satisfy \cref{assumpt:map_varphi}.	
\end{prop}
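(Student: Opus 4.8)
The plan is to verify, in turn, the three properties bundled into \cref{assumpt:map_varphi}, working directly from the recursions \cref{eq:example_varphi}.

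\textbf{Regularity.} I would argue by induction on $T$ (resp. $M$). The base cases $\varphi_0(x,y)=\mathcal{I}_0(x,y)=y$ are clear. For the inductive step, $\varphi_{T+1}=\varphi_T-H_T(\cdot)\,\partial_y g(\cdot,\varphi_T(\cdot))$ is built from $\varphi_T$ by composition, matrix--vector multiplication and subtraction of continuously differentiable maps, using that $g$ is twice continuously differentiable (so $\partial_y g$ is $C^1$) and that $H_T$ is $C^1$ by hypothesis; hence $\varphi_{T+1}$ is $C^1$. Continuity of $\mathcal{I}_{M+1}$ follows the same way, using continuity of $H'_M$. This settles the first requirement of \cref{assumpt:map_varphi}.

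\textbf{Behaviour at critical points.} Fix $(x,y)$ with $\partial_y g(x,y)=0$. A one-line induction gives $\varphi_T(x,y)=y$ and $\mathcal{I}_M(x,y)=y$ for all $T,M$, since each update is a multiple of $\partial_y g$ evaluated at the current iterate, which stays equal to $y$ and hence vanishes. Next I differentiate the $\varphi$-recursion at $(x,y)$. Write $A:=\partial^2_{yy}g(x,y)$, $B:=\partial^2_{xy}g(x,y)$, with all $H_T$ below evaluated at $(x,y)$. Since $\partial_y g(x,\varphi_T(x,y))=\partial_y g(x,y)=0$, every term in which a derivative hits $H_T$ drops out; together with the chain rule identities $\partial_x q=B+\partial_x\varphi_T\,A$ and $\partial_y q=\partial_y\varphi_T\,A$ for the composite $q=\partial_y g(x,\varphi_T(x,\cdot))$, and the symmetry of $H_T$, this yields the linear recursions
\[
\partial_x\varphi_{T+1}(x,y)=\partial_x\varphi_T(x,y)\,(I-AH_T)-BH_T,\qquad \partial_y\varphi_{T+1}(x,y)=\partial_y\varphi_T(x,y)\,(I-AH_T),
\]
with $\partial_x\varphi_0(x,y)=0$ and $\partial_y\varphi_0(x,y)=I$. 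Introducing the auxiliary matrix sequence $D_0:=0$, $D_{T+1}:=D_T-(I+D_TA)H_T$, a direct induction then shows that, simultaneously, $\partial_x\varphi_T(x,y)=BD_T$ and $\partial_y\varphi_T(x,y)=I+AD_T$. This is precisely the second requirement of \cref{assumpt:map_varphi} with $D:=D_T$.

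\textbf{Fixed points are critical.} This is exactly \cref{lem:sufficient_decrease}, proven just above for the maps of \cref{eq:example_varphi}: if $T+M>0$ and $\varphi_T(x,\mathcal{I}_M(x,y))=y$, the sufficient-decrease inequality forces all intermediate iterates to coincide, and invertibility of $H_M$ (when $M>0$) or of $H_T$ (when $M=0<T$) gives $\partial_y g(x,y)=0$. This completes the verification. The only step that is not pure bookkeeping is exhibiting a \emph{single} matrix $D$ that represents both $\partial_x\varphi_T$ and $\partial_y\varphi_T$ in the prescribed forms: this needs the correct auxiliary recursion $D_{T+1}=D_T-(I+D_TA)H_T$ and care with the (transposed-Jacobian) conventions so that $I-AH_T$ appears on the right, together with the observation that all derivative-of-$H_T$ contributions vanish because $\partial_y g$ vanishes along the entire constant orbit through a critical point.
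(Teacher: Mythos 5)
Your proposal is correct and follows essentially the same route as the paper's proof: a simultaneous induction with the auxiliary recursion $D_{T+1}=D_T-(I+D_T\,\partial^2_{yy}g(x,y))H_T(x,y)$ starting from $D_0=0$, the observation that all derivative-of-$H_T$ terms vanish because $\partial_y g$ vanishes along the constant orbit through a critical point, and an appeal to \cref{lem:sufficient_decrease} for the fixed-point property. Your explicit check of continuity/differentiability of the iterates is a small addition the paper leaves implicit, but it does not change the argument.
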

\begin{proof}
	It is clear that for any $(x,y)\in \X\times \Y$, s.t. $y$ is a critical point of  $g(x,.)$, we have that $\mathcal{I}_M(x,y)=\varphi_T(x,y)=y$. Moreover, if $T,M$ are such that $T+M>0$ and $(x,y)\in \X\times \Y$ satisfy $\varphi_T(x,\mathcal{I}_M(x,y))=y$, then \cref{lem:sufficient_decrease} ensures that $\partial_y g(x,y){=}0$. It remains to obtain an expression for $\partial_x\varphi_T$ and $\partial_y \varphi_T$ in terms of second-order derivatives of $g$. We proceed by recursion. For $T=0$, by setting $D=0$, we have that:
	\begin{align}
		\partial_x \varphi_0(x,y) = 0 = \partial_{xy}^2 g(x,y) D, \qquad \partial_y \varphi_0(x,y) = I = I + \partial_{xy}^2 g(x,y) D.
	\end{align}
Let $(x,y)$ be an augmented critical point of $g$. 
Assume now that for some $T\geq 0$, there exists  a matrix $D_T$, such that:
\begin{align}\label{eq:recursion_varphi}
	\partial_x\varphi_T(x,y) = \partial_{xy}^2 g(x,y) D_T,\qquad \partial_y \varphi_T(x,y) = I + \partial_{yy}^2 g(x,y) D_T.  
\end{align}
Differentiating the expression of $\varphi_{T+1}(x,y)$ w.r.t.  $x$ and $y$ yields:
\begin{align}
	\partial_x\varphi_{T+1}(x,y) =& \partial_x \varphi_T(x,y)-\parens{\partial_{xy}g(x,\varphi_T(x,y)) + \partial_x\varphi_T(x,y) \partial_{yy}^2g(x,\varphi_T(x,y))}H_T(x,y) \\
	&- \partial_x H_T(x,y)\partial_y g(x,\varphi_T(x,y)).\\
	=& \partial_{xy}g(x,y)\parens{D_T - \parens{I+D_T \partial_{yy}^2g(x,y)}H_T(x,y)} = \partial_{xy}g(x,y) D_{T+1},
\end{align}
Where we defined $D_{T+1}(x,y) {=} D_T - \parens{I+D_T \partial_{yy}^2g(x,y)}H_T(x,y)$. In the above expression, the last line follows by recalling that $\varphi_T(x,y){=}0$ and $\partial_x H_T(x,y)\partial_y g(x,\varphi_T(x,y))=0$ since $(x,y)$ is an augmented critical point of $g$ and by using the recursion assumption on $\partial_x \varphi_T(x,y)$. 

Similarly, for $\partial_y \varphi_{T+1}(x,y)$, the following holds:
\begin{align}
	\partial_y\varphi_{T+1}(x,y) =& \partial_y \varphi_T(x,y)-\partial_y\varphi_T(x,y) \partial_{yy}^2g(x,\varphi_T(x,y))H_T(x,y) \\
		&- \partial_y H_T(x,y)\partial_y g(x,\varphi_T(x,y)),\\
		=&
		I + \partial_{yy}^2g(x,y)\parens{D_T-\parens{I+D_T\partial_{yy}^2g(x,y)}H_T(x,y)}= I+\partial_{yy}^2g(x,y)D_{T+1}.
\end{align}
Hence, by recursion, $\varphi_T(x,y)$ satisfies the equation \cref{eq:recursion_varphi} for any $T\geq 0$. We have shown that $\varphi_T$ and $\mathcal{I}_M$ satisfy \cref{assumpt:map_varphi}.
\end{proof}

\begin{proof}[Proof of \cref{prop:critical_point_no_correction}] Fix $T\geq$ and consider the iterates $(x_k,y_k)$ of \cref{alg:abg_alg} using $\varphi_T$.
	By assumption $(x_k,y_k)_{k\geq 0}$ converges to an element $(x_T^{\star},y_T^{\star})$ in $\X\times \Y$. By continuity of the maps $\varphi_T$, $\mathcal{I}_M$ and $\partial_x \mathcal{L}_T$, we have that:
	\begin{align}
		y_T^{\star} = \lim_k y_k =  \lim_k \varphi_T(x_{k-1},\mathcal{I}_M(x_{k-1},y_{k-1})) &= \varphi_T(x_T^{\star},\mathcal{I}_M(x_T^{\star},y_T^{\star})),\\
		\lim_k \partial_x \mathcal{L}_T(x_{k-1},\mathcal{I}_M(x_{k-1},y_{k-1}))&= \partial_x \mathcal{L}_T(x_T^{\star},\mathcal{I}_M(x_T^{\star},y_T^{\star})):= d^{\star}.
	\end{align}
	By \cref{assumpt:map_varphi}, the first equation implies that $y_T^{\star}$ is a critical point of $g(x_T^{\star},.)$ (i.e. $\partial_y g(x_T^{\star},y_T^{\star})=0$). Moreover, taking the limit in the update equation $x_k = x_{k-1}-\gamma d_k$ yields $d^{\star}=0$. Hence, we also have that $\partial_x \mathcal{L}_T(x_T^{\star},\mathcal{I}_M(x_T^{\star},y_T^{\star}))=0$. Finally, recall that $\mathcal{I}_M(x_T^{\star},y_T^{\star}) {=}y_T^{\star}$ by \cref{assumpt:map_varphi} since $(x_T^{\star},y_T^{\star})$ is an augmented critical point of $g$. Thus we have shown that:
	\begin{align}
		\partial_x \mathcal{L}_T(x_T^{\star},y_T^{\star})=0,\qquad \partial_y g(x_T^{\star},y_T^{\star}). 
	\end{align}
	Assume now that $y_T^{\star}$ is a local minimum of $g(x_T^{\star},.)$ and that $(x_T^{\star},y_T^{\star})_{T\geq 0}$ is bounded. Hence, there exists a subsequence of $(x_T^{\star},y_T^{\star})_{T\geq 0}$ converging towards an accumulation point $ (x^{\star},y^{\star})$. By abuse of notation, we denote $ (x_T^{\star},y_T^{\star})_{T\geq 0}$ such  subsequence. By continuity of the Hessian of $g$, it follows that $y^{\star}$ must also be a local minimum of  $g(x^{\star},.)$. We can now use \cref{assumpt:map_convergence_to_selection} which ensures that $\varphi_T$ converges to a selection $\phi$. Moreover, since $\partial_x\varphi_T$ converges uniformly near local minima, it follows by \citep[Theorem 7.17]{Rudin:1976} that $\phi(x,y)$ is differentiable w.r.t. $x$ near $(x^{\star},y^{\star})$ and that $\partial_{x}\varphi_T(x,y)$ converges uniformly near $(x^{\star},y^{\star})$ towards $\partial_x \phi(x,y)$. Hence, we can write for $T$ large enough:
	\begin{align}
		\partial_x \mathcal{L}_{\phi}(x_T^{\star},y_T^{\star}) =& \partial_x\mathcal{L}_T(x_T^{\star},y_T^{\star}) + \parens{\partial_x\phi(x_T^{\star},y_T^{\star})- \partial_x\varphi_T(x_T^{\star},y_T^{\star})}\partial_y f(x_T^{\star},y_T^{\star}),\\
		=& \parens{\partial_x\phi(x_T^{\star},y_T^{\star})- \partial_x\varphi_T(x_T^{\star},y_T^{\star})}\partial_y f(x_T^{\star},y_T^{\star}).
	\end{align}
	By uniform convergence of $\partial_{x}\varphi_T(x,y)$ to $\partial_x \phi(x,y)$ and recalling that $(x_T^{\star},y_T^{\star})$ is bounded, we deduce that $\Verts{\partial_x \mathcal{L}_{\phi}(x_T^{\star},y_T^{\star})}$ converges to $0$. In particular, this holds true for a subsequence satisfying $\lim\sup_T \Verts{\partial_x \mathcal{L}_{\phi}(x_T^{\star},y_T^{\star})} {=} \lim_T \Verts{\partial_x \mathcal{L}_{\phi}(x_T^{\star},y_T^{\star})}$, which proves the desired result.
\end{proof}

\begin{proof}[Proof of \cref{prop:gradient_correction}]
	Let $(x,y)\in \X\times \Y$ be such that $y$ is a local minimum of $g(x,.)$. Define $d$ to be:
	\begin{align}
		d = \partial_x \mathcal{L}_T(x,y) - \partial_{xy}^2 g(x,y)\parens{\partial_{yy}^2g(x,y)}^{\dagger}\partial_y \mathcal{L}_T(x,y). 
	\end{align}
	By \cref{prop:diff_flow_selection}, $x\mapsto \phi(x,y)$ is differentiable at $x$ and since $y$ is a critical point of $g(x,.)$, the differential of $\phi(x,y)$ is given by $\partial_x\phi(x,y) = -\partial_{xy}^2 g(x,y)\parens{\partial_{yy}^2g(x,y)}^{\dagger}$. Hence, $d$ is equal to:
	\begin{align}
		d = \partial_x \mathcal{L}_T(x,y) + \partial_x \phi(x,y)\partial_y \mathcal{L}_T(x,y). 
	\end{align} 
  Using the definition of $\mathcal{L}_T$ and recalling that $\varphi_T$ satisfies \cref{assumpt:map_varphi}, the following holds:
	\begin{align}
		d &= \partial_x f(x,\varphi_T(x,y)) + \partial_x \varphi_T(x,y)\partial_y f(x,\varphi_T(x,y)) + \partial_x \phi(x,y)\partial_y\varphi_T(x,y)\partial_y f(x,\varphi_T(x,y)),\\
		&=
		\partial_x f(x,y) + \parens{\partial_x \varphi_T(x,y) +\partial_x \phi(x,y)\partial_y\varphi_T(x,y)}\partial_y f(x,y),\\
		&= \partial_x f(x,y) + \parens{\partial_{xy}^2g(x,y)D +\partial_x \phi(x,y)\parens{I + \partial_{yy}^2g(x,y)D }}\partial_y f(x,y),\\
		&= \partial_x f(x,y) +\partial_x \phi(x,y)\partial_y f(x,y) +\parens{\partial_{xy}^2g(x,y) +\partial_x \phi(x,y) \partial_{yy}^2g(x,y) }D\partial_y f(x,y),\\
		&= \partial_x \mathcal{L}_{\phi}(x,y) + \parens{\partial_{xy}^2g(x,y) +\partial_x \phi(x,y) \partial_{yy}^2g(x,y) }D\partial_y f(x,y).
	\end{align}
	The last term of the above equation vanishes, since by definition of $\partial_x\phi(x,y)$, it holds that $\partial_{xy}^2g(x,y) +\partial_x \phi(x,y) \partial_{yy}^2g(x,y) = 0$. Therefore, we have shown that $d= \partial_x \mathcal{L}_{\phi}(x,y)$, which concludes the proof.
\end{proof}

\begin{proof}[Proof of \cref{prop:critical_point_correction}]
By continuity of the maps $\varphi_T$ and $\mathcal{I}_M$ and since $(x_k,y_k,z_k)\rightarrow (x^{\star},y^{\star},z^{\star})$,  it holds that $y^{\star}= \varphi_T(x^{\star},\mathcal{I}_M(x^{\star},y^{\star}))$. Hence, by  \cref{assumpt:map_varphi}, it follows that $y^{\star}$ must be a critical point of $y\mapsto g(x^{\star},y^{\star})$, i.e. $\partial_y g(x^{\star},y^{\star}){=}0$. 
Moreover, we have that $\tilde{y}_k= \mathcal{I}_M(x_k,y_k)\xrightarrow[k]{} \mathcal{I}_M(x^{\star},y^{\star})=y^{\star}$ by continuity of $\mathcal{I}_M$ and the condition in \cref{assumpt:map_varphi}. 
Since $f$ and $\varphi_T$ are continuously differentiable we get that $u_k,v_k \xrightarrow[k]{} \partial_x \mathcal{L}_{T}(x^{\star},y^{\star}), \partial_y \mathcal{L}_{T}(x^{\star},y^{\star})$. Moreover, recalling that $\mathcal{L}_{T}(x,y) {=} f(x,\varphi_T(x,y))$, by application of  the chain rule and using that $\varphi_T(x^{\star},y^{\star}){=} y^{\star}$ it follows that:
\begin{align}\label{eq:def_u_v_star}
	u_k &\xrightarrow[k]{} u^{\star}:= \partial_x f(x^{\star},y^{\star}) + \partial_x\varphi_T(x^{\star},y^{\star})\partial_y f(x^{\star},y^{\star}), \\
	v_k &\xrightarrow[k]{} v^{\star} :=   \partial_y\varphi_T(x^{\star},y^{\star})\partial_y f(x^{\star},y^{\star}).
\end{align}
By continuity of the higher-order derivatives of $g$, it holds that:
\begin{align}
	\partial_{yy}^2 g(x_k,y_{k+1})&\xrightarrow[k]{} A^*:= \partial_{yy}^2 g(x^{\star},y^{\star}),\\
	\partial_{xy}^2 g(x_k,y_{k+1})&\xrightarrow[k]{} B^*:= \partial_{xy}^2 g(x^{\star},y^{\star}).
\end{align} 
Recall that $z_k$ is given by the update equation $z_k = \mathcal{P}(A_k,v_k,z_{k-1})$, 
where $\mathcal{P}$ is a continuous map for which $z=\mathcal{P}(A,v,z)$ if and only if $z\in \arg\min_z \Verts{A^2z+v}^2$. By continuity of $\mathcal{P}$, it follows that $z^{\star}$ satisfies:
\begin{align}
	z^{\star} = \mathcal{P}(A^{\star},v^{\star},z^{\star}).
\end{align}
Therefore, $z^{\star}$ minimizes $z\mapsto\Verts{(A^{\star})^2z + v^{\star}}^2$ and satisfies the fixed point equation $(A^{\star})^3z^{\star} +A^{\star}v^{\star} {=} 0 $ so that $A^{\star}z^{\star}= -(A^{\star})^{\dagger}v^{\star}$. Moreover, recall that $\xi_k {=} A_kz_k$, hence $\xi_k$ converges towards $\xi^{\star}:= A^{\star}z^{\star}$. Therefore, $\xi^{\star} {=} -(A^{\star})^{\dagger}v^{\star}$. 
Taking the limit as $k$ goes to $+\infty$, we get that $d_k$ defined in \cref{alg:abg_alg} converges towards $d^{\star}$ defined by:
\begin{align}
	d^{\star} :&= u^{\star} + B^{\star}\xi^{\star},\\
				&= u^{\star} - B^{\star}(A^{\star})^{\dagger}v^{\star}.
\end{align}
By \cref{prop:gradient_correction}, it is easy to see that $d^{\star} = \partial_x\mathcal{L}_{\phi}(x^{\star},y^{\star})$. Finally, recalling the update equation $x_{k+1}{=}x_k-\gamma d_k$ and that $x_k\xrightarrow[k]{} x^{\star}$, we directly deduce that $d_k\xrightarrow[k]{} 0$, so that $d^{\star}=0$. 
This shows that $(x^{\star},y^{\star})$ is an equilibrium point of \cref{eq:ABG} and satisfies \cref{eq:SC}.  
\end{proof}

\subsection{Warm-start Strategy}\label{sec:warm-start}
In this section, we provide simple examples for the map $\mathcal{P}(A,v,z)$ to find approximate solutions minimizing $Q(z):=\frac{1}{2}\Verts{A^2z+v}^2$, where  $A$ is a symmetric matrix in $\R^{d\times d}$ satisfying $A\leq LI$, with $L$ being the smoothness constant of $g$ in \cref{assumpt:smootness}.  
The algorithm $\mathcal{P}$ can be as simple as $N$-step of conjugate gradient descent on $Q$ with a step-size $\alpha\leq \frac{1}{L^4}$ where $L$ is the smoothness constant of $g$ in \cref{assumpt:smootness}. More formally, $\mathcal{P}(A,v,z) {=} z^{N}$ where $z^{N}$ is the $N$ iterate of the following recursion:
\begin{align}\label{eq:gradient_descent_Q}
	z^{n+1} = z^n -\alpha \partial_z Q(z^n),\qquad z^0=z. 
\end{align}
It is clear that $\mathcal{P}(A,v,z)$ is continuous in its arguments. Moreover, using a similar argument as in \cref{lem:sufficient_decrease}, one can prove that whenever  $z$ is a fixed point of $\mathcal{P}(A,v,z)$, then $z$ must be a critical point of $Q$ and therefore satisfies the equation $A^3z+Av=0$. The update equation in \cref{eq:gradient_descent_Q} depends however on the step-size $\alpha$ which needs to be smaller than $\frac{1}{L^4}$. to avoid the dependence on such step-size, A more efficient choice for the map $\mathcal{P}$ which does not require using a step-size, is to perform $N$ conjugate gradient iterations on $Q$ starting from an initial condition $z$. 

\subsection{Recovering Existing Algorithms }\label{sec:recovering_existing_alg}

\cref{table:summary_alg} below summarizes how to recover well-known gradient-based algorithms for bilevel optimization from \cref{alg:abg_alg}. 
\begin{table}
\centering
\begin{tabular}{|l|l|l|l|}
\hline
Algorithm     & T & M & Correction  \\
\hline
ITD  \citep{Baydin:2018}          & $T>0$ & $M=0$ & False       \\
\hline 
Corrected ITD            & $T>0$ & $M=0$ & True       \\
\hline
Truncated ITD \citep{Shaban:2019} & $T>0$ & $M>0$ & False       \\
\hline
Corrected Truncated ITD  & $T>0$ & $M>0$ & True       \\
\hline
AID \citep{Pedregosa:2016}           & $T=0$ & $T>0$ & True \\     
\hline
\end{tabular}
\vspace{0.5cm}
\caption{Recovering bilevel optimization algorithms from \cref{alg:abg_alg}. }\label{table:summary_alg}
\end{table}
Hence, \cref{alg:abg_alg} recovers the most popular bilevel optimization algorithms but also introduces a corrected version to them to ensure that they recover the equilibria of \cref{eq:ABG}.

\section{Experiments}\label{sec:experiments}
To illustrate the effect of the corrective term introduced in \cref{sec:algorithms} , we consider two sets of experiments: a synthetic problem for which the optimal solutions can be computed in closed form and a dataset distillation task on Cifar10 \citep{Krizhevsky:2009} using a ResNet18 architecture \citep{He:2015}. 
\subsection{Synthetic Problem}
Motivated by the instrumental variable regression problem \citep{Singh:2019} which solves a bilevel problem with quadratic objectives for both levels, we consider lower and upper-level objectives of the form:
\begin{align}
	f(x,y) &:= \frac{1}{2}x^{\top}A_f x + C_f^{\top}y\\
	g(x,y) &:= \frac{1}{2} y^{\top}A_g y + y^{\top}B_g x
\end{align}
where $A_f$ and $A_g$ are symmetric positive matrices of size $d_x{\times} d_x$ and $d_y{\times} d_y$,  $B_g$ is a $d_y{\times} d_x $ matrix and $C_f$ is a $d_y$ vector with $d_x{=}2000$ and $d_y{=}1000$. 
To allow for multiple solutions to the LL objective,  we choose $A_g$ to be non-invertible with a null-space of dimension $100$ while we choose $A_f$ to be invertible for simplicity. Furthermore, to ensure that $f$ admits a finite minimum value we choose $B_g$ to be of the form $A_g U$ for some randomly sampled matrix $U$. 
We construct the matrices $A_f$ and $A_g$ so that the highest eigenvalues of $A_f$ and $A_g$ are smaller than $1$ and their conditioning is equal to $10$. Here, we define the conditioning of a matrix to be the ratio between the highest and smallest non-zero eigenvalues. For a given $x$, the minimizers of $g$ are of the form: 
\begin{align}
	y = -A_g^{\dagger}B_gx + (I-A_gA_g^{\dagger})y_0,
\end{align}
where $y_0$ is any vector in $\R^{d_y}$. Replacing the optimal $y$ in the UL objective results in the expression which holds for any $y_0\in\R^{d_y}$.
\begin{align}
	\frac{1}{2}x^{\top}A_f x - C_f^{\top}A_g^{\dagger}B_gx + C_f^{\top}(I-A_gA_g^{\dagger})y_0.
\end{align}
At this point, it is easy to check that either maximizing or minimizing the above objective over  $y_0$ results in an infinite value of the objective whenever $C_{f}^{\top}(I-A_gA_g^{\dagger})$ is non-zero. This implies that the optimistic and pessimistic formulations of the bilevel problem result in an infinite optimal loss. However, the \cref{eq:ABG} has a well-defined solution. To see this, it is possible to define a selection of the form $\phi(x,y) {=} -A_g^{\dagger}B_gx + (I-A_gA_g^{\dagger})y$ which corresponds to the limit of a gradient flow of $g$ initialized at $y$. The upper objective of \cref{eq:ABG} is therefore given by:
\begin{align}
	\mathcal{L}_{\phi}(x,y)
	=  \frac{1}{2}x^{\top}A_f x - C_f^{\top}A_g^{\dagger}B_gx + C_f^{\top}(I-A_gA_g^{\dagger})y.
\end{align}
Instead of optimizing $\mathcal{L}_{\phi}(x,y)$ over $x$ and $y$ which would result in an infinite loss, \cref{eq:ABG} optimizes $\mathcal{L}_{\phi}(x,y)$ over $x$ only, while $y$ is optimized for $f(x,y)$, thus seeking an equilibrium $(x^{\star},y^{\star})$ satisfying \cref{eq:SC} which can be expressed in closed form as
\begin{align}
	x^{\star} := A_f^{-1}B_g^{\top}A_g^{\dagger}C_f,\qquad
	y^{\star} :=   -A_g^{\dagger}B_gx + (I-A_gA_g^{\dagger})y_0
\end{align}
where $y_0$ is any vector in $\R^{d_y}$. Hence, while there exist multiple equilibria, they all have the same value for $x^{\star}$ and yield a finite objective.

We solve the above problem using \cref{alg:abg_alg} either using the correction or not. When using the correction, we compute the approximate solution $\xi_k$ to the linear system \cref{eq:min_norm_least_square} using the following update rule:
\begin{align}\label{eq:update_eq_corrective_term}
	\xi_k = \xi_{k-1}-\beta (\partial_{yy}g(x_{k-1},y_k)\xi_{k-1} + v_k)
\end{align} 
where $\beta=0.9$ is a positive step-size. For the lower-level problem, we use $T$ steps of gradient descent with a step-size $\alpha=0.9$   while we set the upper-level step-size to $\gamma=1.$. We then set the warm-start parameter value $M$ to $0$ and vary $T$. 

\paragraph{Results.} We consider the distance of the iterate $x_k$ to the optimal equilibrium $x^{\star}$ as measured by the metric induced by $A_f$:  
\begin{align}
	\Verts{x_k-x^{\star}}^{2}_{A_f} := \frac{1}{2}\parens{x_k-x^{\star}}^{\top}A_f\parens{x_k-x^{\star}}
\end{align}
\cref{fig:toy_quad} (left)  shows the evolution of $\Verts{x_k-x^{\star}}^{2}_{A_f}$ as a function of time (in seconds) for different algorithmic choices, while \cref{fig:toy_quad}(right) shows the evolution of the approximate upper-level gradient $d_k$ used in \cref{alg:abg_alg}. 
We first observe that, without correction, and when using a small number of unrolled iterations $(T\leq 10)$, the algorithm does not converge towards $x^{\star}$, (the distance to the iterate is larger than $10^3$). Instead, the algorithm reaches a different equilibrium as suggested by the evolution of the gradient approximation $d_k$ towards $0$ (\cref{fig:toy_quad}-(right)). As the number of unrolling steps $T$ increases, the algorithm takes more time to converge as suggested by \cref{fig:toy_quad}-(right) (green trace $T=1000$). However, the limit gets closer to the equilibrium $x^{\star}$ (\cref{fig:toy_quad}-(right), green trace). This confirms our first convergence result in \cref{prop:critical_point_no_correction} stating that unrolled optimization finds an approximate solution to \cref{eq:ABG}.     

When using the correction, \cref{alg:abg_alg} is able to recover the equilibrium $x^{\star}$ while still using a small number of unrolling steps $T\leq 10$ and requiring less time to converge. This observation supports the result in \cref{prop:critical_point_correction}.

\begin{figure}
	\includegraphics[width=1.\linewidth]{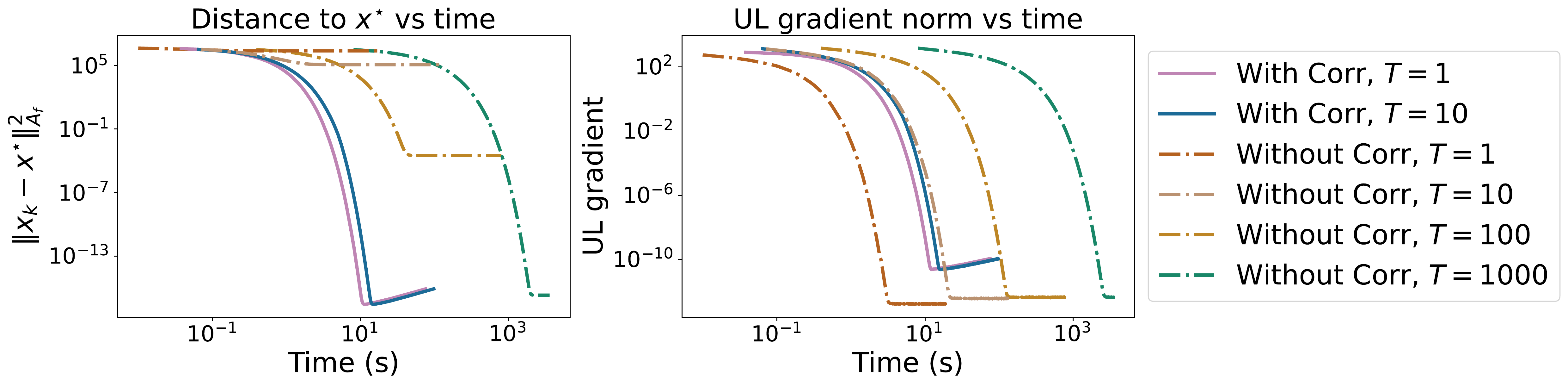}
	\caption{(left) Evolution of the distance of the UL iterate $x_k$ to the equilibrium $x^{\star}$  vs time (in seconds). (right) evolution of the norm of approximate gradient $d_k$ vs time in seconds. In all cases, algorithms are run until convergence, i.e. $\Verts{d_k}$ converges to $0$.}
\label{fig:toy_quad}
\end{figure}

\begin{figure}
\includegraphics[width=1.\linewidth]{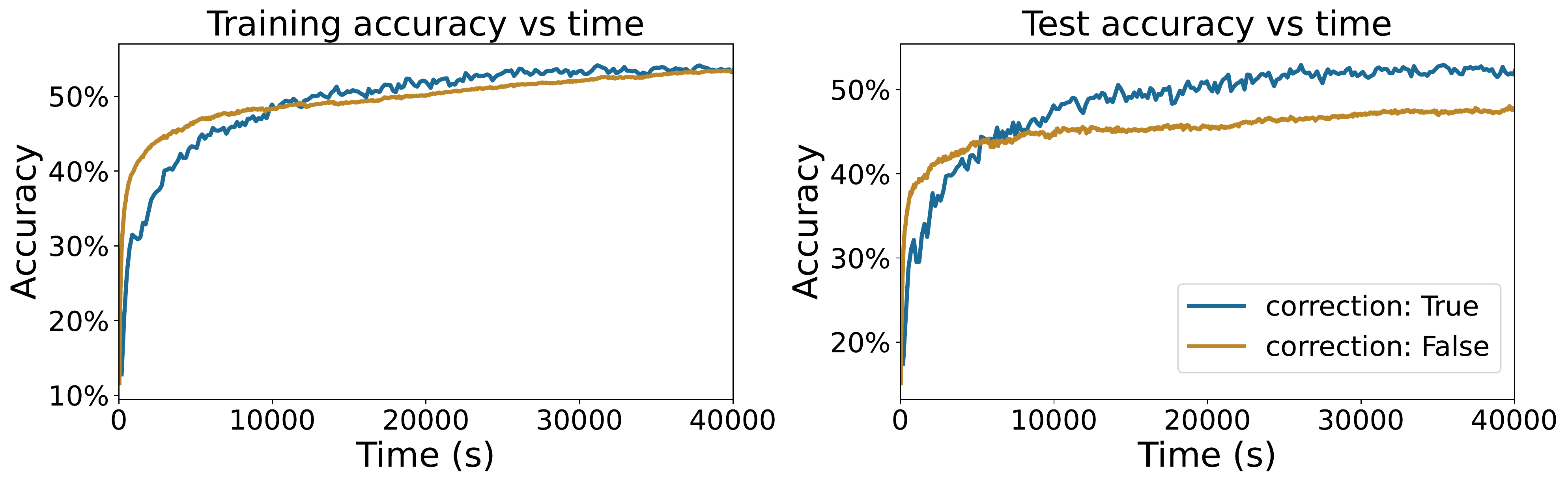}
	\caption{Evolution in time of the training and test accuracy of a ResNet18 model on Cifar10 dataset. Each iteration corresponds to the accuracy of the model with parameter $y_k$ trained on a synthetic dataset of $100$ points $x_k$ to minimize the LL objective.  The synthetic points $x_k$ are learned by minimizing the training error when using the running model $y_k$.}
\label{fig:distillation_cifar10}
\end{figure}

\subsection{Dataset Distillation on Cifar10}
We consider the task of learning a small synthetic dataset so that a classifier trained on such a dataset achieves a small error on a training set. More formally, we consider a classification problem with $C$ classes using a model with parameters $y$  and a training dataset $\mathcal{D}_{tr} = \{(\xi_i,c_i)\}$ consisting of $N$ i.i.d. samples $\xi_i$ and corresponding labels $c_i$. The goal is to learn a synthetic dataset of $F C$ points, where $F$ is a positive integer, such that each class $c$ contains $F$ representative samples. We can collect the synthetic points into a vector $x$ to be learned and denote by $\mathcal{D}_x$ the synthetic dataset. For a given dataset $\mathcal{D}$, denote by $\mathcal{L}_{\mathcal{D}}(y)$ the cross-entropy loss of a model with parameters $y$ evaluated on $\mathcal{D}$. The bi-level formulation of the distillation task consists in optimizing a lower-level objective $g(x,y) = \mathcal{L}_{\mathcal{D}_x}(y)$ to learn the model parameters $y$ that best predicts the classes of the synthetic dataset. The upper-level objective  $g(x,y) = \mathcal{L}_{\mathcal{D}_{tr}}(y)$ evaluates the optimal model on the training set and optimizes the synthetic samples. 

\paragraph{Setup}. We consider a setup similar to \citep{Wang:2018}  for distilling \verb+Cifar10+ \citep{Krizhevsky:2009} on $100$ synthetic points. We set $F{=}10$, thus requiring $10$ synthetic points for each of the $C{=}10$ classes of \verb+Cifar10+. We then use ResNet18 \citep{He:2015} as a classifier and apply \cref{alg:abg_alg} to learn the optimal synthetic points. For the lower level, we use gradient descent with $1$ unrolled iteration (i.e. $T=1$, $M=0$) and a step-size of $\alpha{=}0.001$. For the upper level, we use Adam optimizer \citep{Kingma:2014}, with the default parameters, a step-size of $\gamma=0.01$ and a batch-size of $1024$. When using the corrective term, we use the update equation \cref{eq:update_eq_corrective_term} with a step-size $\beta=0.0001$. 

\paragraph{Results.} \cref{fig:distillation_cifar10} shows the evolution of the training and test accuracy of the model as a function of time in two settings, either with or without correction. While the training accuracy for both versions of the algorithm is similar, the corrective term yields an improved final test accuracy ($54.19\%$ vs $48.6\%$). Note that these accuracies are of the same order as those obtained in \cite{Wang:2018a} suggesting that distilling Cifar10 in only $100$ samples is not sufficient to capture all variability in the dataset.
While the additional correction increases the computational cost per iteration, it provides a better gradient estimate which results in a faster/better performance overall.

\end{document}